\newcommand{\bbz}{\mathbb{Z}}
\newcommand{\bbq}{\mathbb{Q}}
\newcommand{\bbc}{\mathbb{C}}
\newcommand{\bbr}{\mathbb{R}}
\newcommand{\mat}{\begin{pmatrix}}
\newcommand{\emat}{\end{pmatrix}}
\newcommand{\rank}{\mathrm{rank}}
\newtheorem{theorem}{Theorem}[section]
\newtheorem{proposition}[theorem]{Proposition}
\newtheorem{corollary}[theorem]{Corollary}
\newtheorem{lemma}[theorem]{Lemma}
\newtheorem{rmk}[theorem]{Remark}
\begin{document}
 \title[Number of integral points on quadratic twists of elliptic curves]
{Number of integral points on quadratic twists of elliptic curves}

\author{Seokhyun Choi}

\address{
Dept. of Mathematical Sciences, KAIST,
291 Daehak-ro, Yuseong-gu,
Daejeon 34141, South Korea
}
\email{sh021217@kaist.ac.kr}

\date{\today}
\subjclass[2020]{Primary 11G05}
\keywords{Elliptic curves, Quadrtic twists, Integral points}

\begin{abstract}
    We study integral points on the quadratic twists $E_D : y^2 = x^3+D^2Ax+D^3B$ of a fixed elliptic curve $E : y^2 = x^3+Ax+B$ over $\bbq$. For sufficiently large squarefree positive integers $D$, we prove that the number of integral points on $E_D$ admits the upper bound $\ll 4^r$, where $r$ denotes the Mordell-Weil rank of $E_D$. The implied constant is absolute and effectively computable. The proof combines gap principles, bounds for spherical codes, and Diophantine approximation. As an application, we prove that the average number of integral points on the quadratic twist family is bounded.
\end{abstract}

\maketitle

\section{Introduction}

Let $E/\bbq$ be an elliptic curve defined by the Weierstrass equation 
\[E:y^2 = x^3+Ax+B,\quad A,B \in \bbz.\]
In 1929, Siegel \cite{Sie29} proved that the set of integral points $E(\bbz)$ is finite. The principal tool in the argument was the Thue-Siegel-Roth theorem in Diophantine approximation. One notable limitation is that, due to the ineffectivity of the Thue-Siegel-Roth theorem, Siegel's theorem on integral points is itself also ineffective.

Baker \cite{Bak22} was the first to obtain an effective bound for the heights of integral points. He proved that if $(x,y) \in E(\bbz)$, then 
\[\lvert x \rvert \leq \exp\left((10^6\max\{\lvert A \rvert,\lvert B \rvert\})^{10^6}\right).\] 
Lang \cite{Lan83} conjectured that there exist absolute constants $C$ and $\kappa$ such that if $(x,y) \in E(\bbz)$, then 
\[\lvert x \rvert \leq C(\max\{\lvert A \rvert,\lvert B \rvert\})^\kappa.\]
However, very little is known about this conjecture.

A related, but more tractable problem is to establish an upper bound for the number of integral points. Lang \cite{Lan78} conjectured that there exists an absolute constant $C$ such that 
\[\lvert E(\bbz) \rvert \ll C^r\]
where $r$ is the rank of $E/\bbq$. Silverman \cite{Sil87} proved Lang's conjecture for elliptic curves with $j$-invariant non-integral for bounded number of primes. Hindry and Silverman \cite{HS88} proved Lang's conjecture for elliptic curves with bounded Szpiro ratio. Helfgott and Venkatesh \cite{HV06} proved that there exists an absolute constant $C$ such that 
\[\lvert E(\bbz) \rvert \ll C^{\omega(\Delta)}(\log \lvert \Delta \rvert)^2(1.34)^r\]
where $\Delta$ is the discriminant of $E/\bbq$ and $r$ is the rank of $E/\bbq$.

We now restrict our attention to the quadratic twist family of a fixed elliptic curve. For each squarefree positive integer $D$, consider the quadratic twist of $E$ by $D$, 
\[E_D:y^2 = x^3+D^2Ax + D^3B.\]
Quadratic twist families are a natural setting for studying uniform bounds, as they depend on a single parameter $D$.

In this setting, one seeks bounds for $\lvert E_D(\bbz)\rvert$ that depend only on the Mordell-Weil rank of $E_D/\bbq$. By \cite{Sil87}, there exists an absolute constant $C$ such that 
\[\lvert E_D(\bbz) \rvert \ll_{A,B} C^r\]
where $r$ is the Mordell-Weil rank of $E_D/\bbq$. The constant $C$ is computable, though is was not given in explicit form. Gross and Silverman \cite{GS95} were the first to provide an explicit value of $C$, which is of order $10^9$. This was subsequently improved by Chi, Lai, and Tan \cite{CLT05}, who refined the constant $C$ to about $25$. More recently, Chan \cite{Cha22} proved the sharper bound $C=3.8$ for the congruent number curve.

A principal motivation of this paper is to extend Chan's result for the congruent number curve to general quadratic twist families. We prove that much of Chan's method carries over to this setting, giving a bound with constant $C=4$. This improves the previous general bound of Chi, Lai, and Tan.

\begin{theorem}\label{main_theorem}
    Let $E/\bbq$ be an elliptic curve defined by the Weierstrass equation 
    \[E:y^2 = x^3+Ax+B,\quad A,B \in \bbz.\]
    For any squarefree positive integer $D$, let $E_D$ be the quadratic twist of $E$ by $D$, defined by the Weierstrass equation 
    \[E_D:y^2 = x^3+D^2Ax + D^3B.\]
    Then there exists a constant $D(A,B)$, depending only on $A,B$, such that 
    \[\left\lvert E_D(\bbz) \right\rvert \ll 4^{\rank(E_D/\bbq)}\quad \text{whenever}\quad D \geq D(A,B).\]
\end{theorem}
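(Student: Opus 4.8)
The plan is to present $E_D(\bbq)$ modulo torsion as a lattice $\Lambda\cong\bbz^r$ inside the Euclidean space $V=E_D(\bbq)\otimes\bbr\cong\bbr^r$ equipped with the inner product $\langle\,\cdot\,,\cdot\,\rangle$ induced by the N\'eron--Tate canonical height $\hat h$ (so $\hat h(P)=|P|^2$), and to count the integral points by combining a \emph{gap principle} --- forcing two distinct integral points to be nearly orthogonal unless their heights differ by a fixed ratio --- with a \emph{sphere-packing} estimate on $S^{r-1}$. By Mazur's theorem $|E_{D,\mathrm{tors}}(\bbq)|\le 16$, so it suffices to bound the number of \emph{non-torsion} integral points, the bounded torsion factor being absorbed into $\ll$.

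The hypothesis ``$|D|$ large'' enters through the geometry of $\Lambda$. Since $j(E_D)=j(E)$ is fixed, one has $\log|\Delta_{E_D}|\ll_E\log|D|\ll_E\log N_{E_D}$, so the family $\{E_D\}$ has bounded Szpiro ratio; hence the theorem of Hindry and Silverman \cite{HS88} on Lang's height conjecture applies and gives $\hat h(P)\gg_E\log|\Delta_{E_D}|\gg_E\log|D|$ for every non-torsion $P\in E_D(\bbq)$. Consequently, for $|D|\gg_E 1$ the lattice $\Lambda$ has no nonzero vector below any prescribed height bound $C_0$; in particular every non-torsion integral point is ``large'', and the count of ``small'' points --- the usual nuisance term --- is simply $\le 16$. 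At the other end, Baker's bound \cite{Bak22} applied to $E_D$ gives an effective (if enormous) upper bound for $\hat h(P)$, $P\in E_D(\bbz)$. I would also record the comparison $\hat h(P)=\tfrac12 h(x(P))+\Phi_D(P)+O_E(1)$, where $h$ is the Weil height on $\bbp^1$ and $\Phi_D(P)$ gathers the local heights at the primes $p\mid D$ of bad reduction introduced by the twist (together with an archimedean term); this $\Phi_D$ has size $\ll_E\log|D|$, but it is precisely the quantity that must reappear with the opposite sign, and cancel, in the gap estimate.

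For the gap principle, the standard addition formulas express $x(P+Q)$ and $x(P-Q)$ as the two roots of $t^2-\sigma t+\pi$, with $\sigma$ and $\pi$ explicit rational functions of $x(P)$, $x(Q)$ and the coefficients $D^2A$, $D^3B$ whose denominator is $(x(P)-x(Q))^2$. Since $h\bigl(x(P+Q)\cdot x(P-Q)\bigr)\le h(x(P+Q))+h(x(P-Q))$, and since for points of height $\gg_E\log|D|$ --- automatic here --- the numerator of $\pi$ is comparable to $x(P)^2x(Q)^2$, translating naive heights into canonical heights via the comparison above yields: if $P,Q\in E_D(\bbz)$ are distinct, non-torsion, with $\hat h(P)\le\hat h(Q)$ and the angle $\theta$ between $P$ and $Q$ satisfies $\cos\theta\ge\tfrac12$, then $\hat h(Q)\ge\rho\,\hat h(P)$ for an absolute $\rho>1$, the terms $\Phi_D$ cancelling as advertised. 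Fed into the geometry of $V$, this gives: cover $S^{r-1}$ by $\ll c_1^{\,r}$ cones of angular radius $<\pi/6$ (a volume/covering estimate, $c_1$ explicit); inside one cone any two integral points make an angle $<\pi/3$, hence their heights grow geometrically with ratio $\ge\rho$; using the Baker upper bound, the Hindry--Silverman lower bound, and --- again invoking $|D|\gg_E 1$ --- a bookkeeping that collapses the usual logarithmic count of height scales to $O_E(1)$ points per cone, one obtains $|E_D(\bbz)|\ll c_1^{\,r}$. The angular threshold (near $\pi/6$ for the cones, i.e.\ $\pi/3$ for the pairwise separation) and the ratio $\rho$ are then tuned so that $c_1=4$.

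I expect the main obstacles to be two. First, the cancellation of the twist-local term $\Phi_D$ in the sharpened gap inequality: one must check that the local heights at the primes $p\mid D$ appearing in $\hat h(P)-\tfrac12 h(x(P))$ match those arising from the denominator $(x(P)-x(Q))^2$ and from the bad-reduction contributions to $\hat h(P\pm Q)$, so that no $\log|D|$ factor survives; this is the delicate local computation, and it is where ``sufficiently large $|D|$'' is used essentially. Second, the optimization to the constant $4$: this requires the sharp form of the gap principle --- in which $\cos\theta>\tfrac{|P|}{2|Q|}$ (automatic once $\cos\theta>\tfrac12$ and $\hat h(P)\le\hat h(Q)$) forces $\hat h(P-Q)<\hat h(Q)$, enabling a strict descent $Q\mapsto P-Q$ --- balanced against the best available covering number for $\pi/3$-separated directions on $S^{r-1}$; squeezing both of these hard enough to land on $4$ rather than a larger constant, while simultaneously removing the logarithmic and $\omega(\Delta)$ factors that appear in the general Helfgott--Venkatesh framework, is the crux of the argument.
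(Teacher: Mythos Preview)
Your proposal has a genuine gap in the treatment of the large points. You plan to bound the number of height scales in each cone by noting that heights range from $\gg_E \log|D|$ (Hindry--Silverman) up to Baker's bound, and you assert that for $|D|\gg_E 1$ ``a bookkeeping \ldots\ collapses the usual logarithmic count of height scales to $O_E(1)$ points per cone.'' But Baker's bound applied to $E_D$ gives $\hat h(P)\ll_E D^{c'}$ for an enormous constant $c'$, so the number of geometric height scales with ratio $\rho>1$ between $c\log D$ and $D^{c'}$ is $\asymp_E \log D$, which \emph{grows} with $D$ rather than collapsing. Even Lang's conjectured bound $h(x(P))\ll_E D^{\kappa}$ would still give $\asymp\log D$ scales. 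The descent $Q\mapsto P-Q$ you mention does not help either, since $P-Q$ is generally not integral and the iteration leaves $E_D(\bbz)$. With your architecture the final bound is at best $\ll(\log D)\,c_1^{\,r}$, not $\ll 4^r$.

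The paper eliminates this $\log D$ factor by a mechanism your proposal lacks entirely. For points with $\hat h(P)\ge 2200\log D$ it passes to the $3^r$ cosets of $3E_D(\bbq)$; within a coset with representative $R$ of minimal height, the points with $\hat h(P)\le 1050\,\hat h(R)$ span only boundedly many height scales and are handled by a sharp gap principle ($\cos\theta\le 0.504$) together with the Kabatiansky--Levenshtein sphere-packing bound, giving $\ll(1.33)^r$. The remaining points, written as $P=3Q+R$, are shown via the $3$-division polynomial to make $x(Q)$ a rational approximation to a fixed algebraic number $x(S)$ of degree $\le 9$ with exponent $>2.75$; the \emph{quantitative Roth theorem} (Evertse) then bounds their number by an absolute constant independent of $D$ and $r$. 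The product $3^r\cdot(1.33)^r<4^r$ closes the argument. Your gap principle and sphere geometry are in the right spirit for the medium range, but without the Diophantine-approximation input at the top the count cannot be made uniform in $D$.
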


\begin{rmk}
    The bound $\ll$ is an absolute constant which is effectively computable.
\end{rmk}

Theorem~\ref{main_theorem} also has a natural arithmetic consequence. Recent work of Smith \cite{Smi22} shows that exponential moments of the Mordell-Weil rank are uniformly bounded in quadratic twist families. Specializing \cite[Theorem~1.1]{Smi22} to our setting, we obtain the following theorem.

\begin{theorem}\label{Smith_theorem}
    Let $E/\bbq$ be an elliptic curve. Let $N(X)$ be the number of squarefree positive integers $D$ such that $0<D \leq X$. Then there exists a constant $C>0$ depending only on $E/\bbq$ such that 
    \[\frac{1}{N(X)}\sum_{\substack{0< D \leq X \\ D\text{ squarefree}}} 4^{\rank{(E_D/\bbq)}} \leq C,\quad X>0.\]
\end{theorem}

Applying Theorem~\ref{main_theorem} and Theorem~\ref{Smith_theorem}, we can prove that the average number of integral points on the quadratic twist family is bounded.

\begin{corollary}\label{average_corollary}
    Let $E/\bbq$ and $E_D/\bbq$ be as in Theorem~\ref{main_theorem}. Let $N(X)$ be the number of squarefree positive integers $D$ such that $0<D \leq X$. Then 
    \[\limsup_{X \rightarrow \infty} \frac{1}{N(X)}\sum_{\substack{0<D \leq X \\ D\text{ squarefree}}} \left\lvert E_D(\bbz) \right\rvert \ll C,\]
    where $C$ is a constant appearing in Theorem~\ref{Smith_theorem}.
\end{corollary}
\begin{proof}
    Define 
    \[S := \sum_{\substack{0<D \leq D(A,B) \\ D\text{ squarefree}}} \left\lvert E_D(\bbz) \right\rvert.\]
    Then 
    \[\frac{1}{N(X)}\sum_{\substack{0<D \leq X \\ D\text{ squarefree}}} \left\lvert E_D(\bbz) \right\rvert \ll \frac{S}{N(X)}+ \frac{1}{N(X)} \sum_{\substack{D(A,B)<D \leq X \\ D\text{ squarefree}}} 4^{\rank{(E_D/\bbq)}}\leq \frac{S}{N(X)} + C.\]
    Letting $X \rightarrow \infty$ proves the theorem.
\end{proof}

\begin{rmk}
    The bound $\ll$ in Corollary~\ref{average_corollary} is absolute. Consequently, if the constant $C$ in Theorem~\ref{Smith_theorem} could be chosen independently of $E/\bbq$, then the same would hold for Corollary~\ref{average_corollary}. In particular, this implies that the average number of integral points on the quadratic twist family is uniformly bounded.
\end{rmk}

\begin{rmk}
    Recently, Browning and Chan \cite{BC24} proved a much stronger average result. They proved that the average number of integral points on the quadratic twist family tends to $0$, assuming the weak Hall-Lang conjecture if $x^3+Ax+B$ has precisely one root over $\bbq$. 
\end{rmk}

The proof of Theorem~\ref{main_theorem} combines gap principles, bounds for spherical codes, and Diophantine approximation, but these ingredients enter in different ways according to the height of the integral point. We begin by partitioning $E_D(\bbz)$ into four subsets:
\begin{gather*}
    E_D(\bbz)_{small} := \{P \in E_D(\bbz)\:|\:\hat{h}(P) \leq 1.5\log D\}, \\
    E_D(\bbz)_{medium-small} := \{P \in E_D(\bbz)\:|\:1.5\log D \leq \hat{h}(P) \leq 20\log D\}, \\
    E_D(\bbz)_{medium-large} := \{P \in E_D(\bbz)\:|\:20\log D \leq \hat{h}(P) \leq 2200\log D\}, \\
    E_D(\bbz)_{large} := \{P \in E_D(\bbz)\:|\:\hat{h}(P) \geq 2200\log D\}.
\end{gather*}

To bound $E_D(\bbz)_{small}$, $E_D(\bbz)_{medium-small}$, and $E_D(\bbz)_{medium-large}$, we establish gap principles between integral points, and then apply bounds from spherical codes. The form of the gap principle used in our argument was originally formulated by Helfgott \cite{Hel04}, and subsequently refined by Alpoge \cite{Alp14}.

To bound $E_D(\bbz)_{large}$, we once again employ the gap principle, but we will also employ the techniques from Diophantine approximation. Roughly speaking, we prove that large integral points yield rational approximations to a fixed algebraic number with exponent $>2$. We then invoke the quantitative Roth's theorem to obtain a bound of large integral points. The Diophantine approximation of this kind originates from Alpoge \cite{Alp14}, and subsequently refined by Chan \cite{Cha22}.

\section{Notations}

In this paper, we fix an elliptic curve $E/\bbq$ defined by the Weierstrass equation 
\[E:y^2 = x^3+Ax+B,\quad A,B \in \bbz.\]
Define $\Delta = -16(4A^3+27B^2)$ and $j = -1728\frac{(4A)^3}{\Delta}$. We also fix a positive integer $M$ satisfying $\max\{10\sqrt{\lvert A \rvert},5\sqrt[3]{\lvert B \rvert}\} \leq M$. 

Given a squarefree positive integer $D$, let $E_D$ be the elliptic curve defined by 
\[E_D:y^2 = x^3+D^2Ax + D^3B\]
and let $E^D$ be the elliptic curve defined by 
\[E^D : Dy^2 = x^3+Ax+B.\]
The two elliptic curves $E_D$ and $E^D$ are quadratic twist of $E$ by $D$, and isomorphic over $\bbq$ by the isomorphism 
\[\phi_D:E_D \longrightarrow E^D,\quad (x,y) \longmapsto (x/D,y/D^2).\]

When we say "for sufficiently large $D$", it means $D \geq D(A,B)$ for some squarefree positive integer $D(A,B)$ depending only on $A,B$. When we use the constants $c_1,c_2,\ldots$, they depend only on $A,B$. When we write $f \ll g$, we mean $\lvert f \rvert \leq C \lvert g \rvert$, where $C$ is an absolute constant.

Let $h$ be the absolute logarithmic height function on $\overline{\bbq}$. For an elliptic curve $E/\bbq$ and a point $P \in E(\overline{\bbq})$, define 
\[h(P) = h(x(P)) \qquad \text{and} \qquad\hat{h}(P) = \lim_{n \rightarrow \infty} \frac{h(2^nP)}{4^n}.\]
Note that $\hat{h}$ is not normalized by the factor $\frac{1}{2}$.

\section{Preliminary lemmas}

We begin by comparing the canonical height $\hat{h}$ and the Weil height $h$ on $E$.

\begin{lemma}\label{Weil_canonical_height_difference}
    Let $P \in E(\overline{\bbq})$. Then 
    \[c_1 \leq \hat{h}(P) - h(P) \leq c_2.\]
\end{lemma}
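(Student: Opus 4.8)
The plan is to reduce everything to the single estimate that $\bigl|h(2P)-4h(P)\bigr|$ is bounded by a constant depending only on $E$, and then to sum a geometric series. First I would record the duplication formula: for $P\in E(\overline{\bbq})$ with $2P\neq O$,
\[x(2P)=\frac{x(P)^4-2Ax(P)^2-8Bx(P)+A^2}{4\bigl(x(P)^3+Ax(P)+B\bigr)}.\]
Writing $F(X)=X^4-2AX^2-8BX+A^2$ and $G(X)=4(X^3+AX+B)$, the nonsingularity of $E$ (that is, $\Delta\neq 0$) forces $F$ and $G$ to have no common root in $\overline{\bbq}$, hence to be coprime in $\overline{\bbq}[X]$. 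By the standard functoriality of heights under a degree-$4$ morphism $\bbp^1\to\bbp^1$, there is a constant $c(A,B)$, depending only on $A$ and $B$, with
\[\bigl|h(2P)-4h(P)\bigr|=\Bigl|h\bigl(F(x(P))/G(x(P))\bigr)-4h(x(P))\Bigr|\leq c(A,B)\]
for all $P$ with $2P\neq O$; the finitely many points with $2P=O$ are absorbed into $c(A,B)$. Here the upper bound follows from the triangle inequality applied to the coefficients of $F$ and $G$, and the lower bound from an effective Nullstellensatz identity expressing a high power of $X$ as an $\overline{\bbq}[X]$-combination of $F$ and $G$ with coefficients controlled by the resultant of $F$ and $G$.

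Granting this estimate, I would telescope: for every $N$,
\[\frac{h(2^NP)}{4^N}-h(P)=\sum_{n=0}^{N-1}\frac{h(2^{n+1}P)-4\,h(2^nP)}{4^{n+1}},\]
and letting $N\to\infty$, the left-hand side converges to $\hat h(P)-h(P)$ by the definition of $\hat h$, so
\[\hat h(P)-h(P)=\sum_{n=0}^{\infty}\frac{h(2^{n+1}P)-4\,h(2^nP)}{4^{n+1}}.\]
The absolute value of the right-hand side is at most $\sum_{n=0}^{\infty}c(A,B)/4^{n+1}=c(A,B)/3$. Hence the lemma holds with, say, $c_1=-c(A,B)/3$ and $c_2=c(A,B)/3$, both depending only on $E$.

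The only genuinely nontrivial ingredient is the lower bound $h\bigl(F(x)/G(x)\bigr)\geq 4h(x)-c(A,B)$, i.e.\ that the degree-$4$ map does not collapse heights; this is exactly where the coprimality of $F$ and $G$ (equivalently $\Delta\neq 0$) is used, and making the constant depend only on $E$ requires that the Nullstellensatz coefficients be bounded in terms of $A$ and $B$ alone. Everything else in the argument is formal. I expect this lower bound to be the main obstacle, though it is entirely standard.
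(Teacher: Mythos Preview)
Your argument is correct: this is exactly Tate's telescoping proof that $\hat h-h$ is bounded on $E(\overline{\bbq})$ by a constant depending only on $A,B$. The duplication formula, the coprimality of $F$ and $G$ coming from $\Delta\neq 0$, the height comparison $\lvert h(F(x)/G(x))-4h(x)\rvert\leq c(A,B)$ for a degree-$4$ map of $\bbp^1$, and the geometric-series telescoping are all standard and correctly assembled. Your handling of the $2$-torsion edge case is also fine, since those $x$-coordinates are roots of $x^3+Ax+B$ and hence have height bounded in terms of $A,B$.

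The paper, by contrast, does not reprove anything: it simply quotes Silverman \cite{Sil90}, which gives the explicit two-sided bound
\[-\tfrac{1}{4}h(j)-1.946-\tfrac{1}{6}h(\Delta)\ \leq\ \hat h(P)-h(P)\ \leq\ \tfrac{1}{6}h(j)+2.14+\tfrac{1}{6}h(\Delta),\]
and reads off $c_1,c_2$ from that. So your route is genuinely different in presentation: you give a self-contained derivation with an inexplicit constant $c(A,B)/3$, whereas the paper imports sharp explicit constants from the literature. For the purposes of this paper either suffices, since only the \emph{existence} of $c_1,c_2$ depending on $E$ is ever used downstream; the explicit form from \cite{Sil90} is not exploited numerically. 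Your version has the advantage of being self-contained; the paper's has the advantage of brevity and of naming constants one could in principle track.
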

\begin{proof}
    By \cite{Sil90}, 
    \[-\frac{1}{4}h(j)-1.946 - \frac{1}{6}h(\Delta) \leq \hat{h}(P) - h(P) \leq \frac{1}{6}h(j) + 2.14 + \frac{1}{6}h(\Delta).\]
\end{proof}

Next, we estimate heights on $E^D$.

\begin{lemma}\label{heights_for_E^D}
    Let $P \in E^D(\bbq)$. Then 
    \begin{equation}\label{Weil_canonical_1}
        c_1 \leq \hat{h}(P) - h(P) \leq c_2
    \end{equation}
    and if $P$ is non-torsion, 
    \begin{equation}\label{canonical_lower_bound_1}
        \hat{h}(P) \geq \frac{1}{4}\log D + c_3.
    \end{equation}
\end{lemma}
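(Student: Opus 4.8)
The plan is to transfer both inequalities from the original curve $E$ to its twist $E^D$ by exploiting the fact that $E^D$ and $E$ are isomorphic over the quadratic field $K=\bbq(\sqrt{D})$. Concretely, the map $(x,y)\mapsto(x,\sqrt{D}\,y)$ carries $E$ to the model $Dy^2=x^3+Ax+B$, so a point $P\in E^D(\bbq)$ corresponds to a point $\tilde P\in E(K)$ with $x(\tilde P)=x(P)$. Since the $x$-coordinates agree, the Weil heights agree: $h_{E^D}(P)=h(x(P))=h_E(\tilde P)$. The canonical heights are intrinsic to the curve over $\overline{\bbq}$ and invariant under $\bbq$-isomorphism between $E^D$ and $E_D$ (and under the base-field extension to $K$ for $E$), so $\hat h_{E^D}(P)=\hat h_E(\tilde P)$ as well. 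Thus \eqref{Weil_canonical_1} is immediate from Lemma~\ref{Weil_canonical_height_difference} applied to $\tilde P\in E(\overline{\bbq})$, with the \emph{same} constants $c_1,c_2$ since these depend only on $h(j)$ and $h(\Delta)$, which are unchanged by twisting.

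For the lower bound \eqref{canonical_lower_bound_1} the point is to locate $P$ archimedean-adically versus $2$-adically (or more precisely, to use the places dividing $D$). Write $P=(x,y)\in E^D(\bbq)$ in lowest terms; the defining equation $Dy^2=x^3+Ax+B$ forces strong divisibility constraints. Writing $x=a/e^2$, $y=b/e^3$ in lowest terms (standard for a minimal Weierstrass-type model), the equation becomes $D b^2 = a^3 + Aae^4 + Be^6$. For any prime $p\mid D$, reducing modulo $p$ gives $a^3\equiv 0\pmod p$ — using that $p\nmid e$ — hence $p\mid a$, and then $p^2\mid Db^2 - Aae^4 - Be^6$ pushes $p^2 \mid a^3$ after accounting for the single power of $p$ in $D$; unwinding, one finds $v_p(a)\geq 1$ for each $p\mid D$, so $D$ essentially divides $a$, whence $|a|\geq D$ (up to bounded correction from the $A,B$ terms, which is where "sufficiently large $D$" or the constant $c_3$ enters). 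Therefore $h(P)=\log\max(|a|,e^2)\geq \log|a| \geq \log D - O(1)$. Wait—this would give exponent $1$, not $\tfrac14$; the correct bookkeeping is subtler because the cube structure means $v_p(a)$ can be as small as... let me reconsider: from $Db^2=a^3+\cdots$ with $p\|D$, $p\nmid b$ forces $v_p(a^3+Aae^4+Be^6)=1$, but $a^3$ contributes a multiple of $3v_p(a)$ and the lower-order terms contribute at least $v_p(a)$, so the minimum is $v_p(a)$ provided $v_p(a)<3v_p(a)$, forcing $v_p(a)=1$. That still gives $|a|\geq \prod_{p\mid D}p$ which for squarefree $D$ is $\geq D$. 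Hmm — so the $\tfrac14$ must come instead from comparing with the genuinely minimal model of $E^D$, i.e. $E_D:y^2=x^3+D^2Ax+D^3B$ via $\phi_D$, where $x$-coordinates scale by $D$; then $h_{E_D}(Q) = h_{E^D}(P)+\log D$ roughly, but the minimal model for $E_D$ only sees $D^2$ in a way that divides out, and the honest statement is that the $x$-coordinate of a point on $E^D$ lies in $\tfrac1D\bbz$ in a controlled way giving the $\tfrac14$. I will therefore follow the standard reference computation.

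Rather than rederive this, I would invoke the structure directly: the relevant statement is that for $P\in E^D(\bbq)$ non-torsion, the $x$-coordinate has denominator (in the $E_D$-model) bounded below in terms of $D$, a computation carried out in \cite{GS95} and \cite{Sil87}. The clean route: apply $\phi_D$ to get $Q=\phi_D^{-1}(P)\in E_D(\bbq)$ with $\hat h_{E_D}(Q)=\hat h_{E^D}(P)$ (canonical height is a $\bbq$-isomorphism invariant) and $x(Q)=D\,x(P)$. Now $Q$ is an integral-or-not point on $E_D$; using the minimal model comparison for $E_D$ one shows $h_{E_D}(Q)\geq \tfrac14\log|\Delta_{E_D}| + O(1) \gg \log D$ when $Q$ is non-torsion — more precisely that a non-torsion rational point on an elliptic curve has canonical height bounded below by a fixed fraction of $\log$ of the minimal discriminant only in special situations, so instead one argues that $x(Q)$ cannot be too $D$-adically small: since $P\in E^D(\bbq)$ satisfies $Dy^2 = x(P)^3 + \cdots$, at each $p\mid D$ the point $Q$ reduces into the non-identity part of the Néron model, contributing $\geq \tfrac12\log p$ (or $\tfrac13$, $\tfrac14$ depending on reduction type) to the local canonical height $\hat\lambda_p(Q)$, and summing over $p\mid D$ with the product formula yields $\hat h_{E_D}(Q)=\sum_v \hat\lambda_v(Q) \geq \tfrac14\sum_{p\mid D}\log p + (\text{arch. contribution}) \geq \tfrac14\log D + c_3$.

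The main obstacle is pinning down the exact constant $\tfrac14$ and showing it is uniform: it comes from the worst-case Kodaira fibre type at primes of bad reduction of $E_D$ dividing $D$ (where $E$ had good reduction), and one must check that for $p\mid D$ with $p\nmid 6\Delta_E$ the twist has type $I_0^*$, whose component group contributes a local height defect of exactly $\tfrac14\log p$; the finitely many primes dividing $6\Delta_E$ (and the archimedean place) get absorbed into the constant $c_3$, which is legitimate since $c_3$ may depend on $E$. So the real work is the local analysis of $\hat\lambda_p$ on $E_D$ at primes dividing $D$, which I expect to cite from Silverman's formulas for the local height on quadratic twists rather than rederive.
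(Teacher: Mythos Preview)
Your argument for \eqref{Weil_canonical_1} is correct and matches the paper's: both reduce to Lemma~\ref{Weil_canonical_height_difference} applied to the image of $P$ in $E(\overline\bbq)$, using that the isomorphism $E^D\to E$ over $\bbq(\sqrt D)$ preserves $x$-coordinates (the paper also cites \cite{Hel04}[Lemma~4.1] for this transfer).

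For \eqref{canonical_lower_bound_1} the paper gives no argument at all---it simply cites \cite{Hel04}[Corollary~4.3]. Your proposal, by contrast, is an exploratory sketch with several abandoned false starts, and the final local-height argument contains a concrete error. You assert that every non-torsion $Q\in E_D(\bbq)$ reduces to a non-identity component of the N\'eron model at each prime $p\mid D$. This is false: take $E:y^2=x^3-x$ and $D=5$, so $E_D:Y^2=X^3-25X$; the non-torsion generator $Q=(-4,6)$ has $v_5(X(Q))=0$ and reduces to the smooth point $(1,1)$ on the cuspidal cubic $Y^2\equiv X^3\pmod 5$, hence lies on the identity component. In general a point $(x,y)\in E^D(\bbq)$ can have $v_p(x)=-1$ at a prime $p\mid D$, which on $E_D$ gives $v_p(X)=0$ and identity-component reduction; your argument produces no lower bound for $\hat\lambda_p$ in that case, so the ``sum $\tfrac14\log p$ over $p\mid D$'' step is unjustified. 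A local-height proof along your lines can be made to work, but it requires a genuine case split at each $p\mid D$ (the cases $v_p(X)<0$, $v_p(X)=0$, $v_p(X)\ge1$ behave differently) together with careful tracking of the normalization---not the single component claim you make. As written, the proposal does not establish \eqref{canonical_lower_bound_1}.
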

\begin{proof}
    Apply Lemma~\ref{Weil_canonical_height_difference} and \cite[Lemma~4.1]{Hel04} for \eqref{Weil_canonical_1} and \cite[Corollary~4.3]{Hel04} for \eqref{canonical_lower_bound_1}.
\end{proof}

From the isomorphism $\phi_D:E_D \rightarrow E^D$, we can estimate heights on $E_D$.

\begin{lemma}\label{Weil_canonical_height}
    Let $P \in E_D(\bbq)$. Then 
    \begin{equation}\label{Weil_canonical_2}
        c_1 - \log D \leq \hat{h}(P) - h(P) \leq c_2 + \log D.
    \end{equation}
    If in addition $x(P)>D$, then 
    \begin{equation}\label{Weil_canonical_3}
        c_1 - \log D \leq \hat{h}(P) - h(P) \leq c_2
    \end{equation}
\end{lemma}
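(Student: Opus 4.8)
The plan is to transport the height comparison on $E^D$ to $E_D$ via the isomorphism $\phi_D$, keeping careful track of how the Weil height changes. First I would recall that $\phi_D(x,y)=(x/D,y/D^2)$, so for $P \in E_D(\bbq)$ we have $x(\phi_D(P)) = x(P)/D$, and since $\hat h$ is invariant under $\bbq$-isomorphisms, $\hat h_{E_D}(P) = \hat h_{E^D}(\phi_D(P))$. Thus the content of \eqref{Weil_canonical_2} is entirely about comparing $h(x(P))$ with $h(x(P)/D)$. Writing $x(P) = a/b$ in lowest terms, one has $h(x(P)) = \log\max\{|a|,|b|\}$ and $h(x(P)/D) = \log\max\{|a|, |bD|\}$ (note $\gcd(a,bD)$ may not be $1$, but since $D$ is squarefree and the curve $E_D$ has good reduction properties, or more simply just by the elementary inequality $h(\alpha/D) \le h(\alpha) + \log D$ and $h(\alpha) \le h(\alpha/D) + \log D$), so in general
\[
\bigl| h(x(P)) - h(x(P)/D) \bigr| \le \log D.
\]
Combining this with \eqref{Weil_canonical_1} applied to $\phi_D(P) \in E^D(\bbq)$, namely $c_1 \le \hat h(\phi_D(P)) - h(\phi_D(P)) \le c_2$, yields \eqref{Weil_canonical_2} immediately: $\hat h(P) - h(P) = \bigl(\hat h(\phi_D(P)) - h(\phi_D(P))\bigr) + \bigl(h(x(P)/D) - h(x(P))\bigr)$, and each piece is controlled.

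For the refinement \eqref{Weil_canonical_3}, the extra hypothesis $x(P) > D$ removes the need for the full $\log D$ slack on the upper side. Indeed, if $x(P) = a/b > D > 0$ with $b \ge 1$, then $|a| > D|b| \ge |b|$, so $h(x(P)) = \log|a|$; and since $|a| > D|b|$ forces $\max\{|a|,|bD|\} = |a|$ as well (or is at most $|a|$ after cancellation), we get $h(x(P)/D) \le h(x(P))$, i.e. $h(x(P)/D) - h(x(P)) \le 0$. Feeding this one-sided improvement into the decomposition above gives the sharper upper bound $\hat h(P) - h(P) \le c_2$ while the lower bound $c_1 - \log D$ is unchanged (the lower direction still only uses $h(x(P)/D) \ge h(x(P)) - \log D$).

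I expect the only genuine subtlety to be the careful bookkeeping of $\gcd$'s when passing from $x(P)$ to $x(P)/D$: a priori $x(P)$ could already have denominator divisible by $D$, in which case dividing by $D$ clears it and the height can drop by as much as $\log D$ — which is exactly why the general bound \eqref{Weil_canonical_2} needs the $\log D$ term on both sides, and why it can be removed on the upper side once we know $x(P) > D$ (a large numerator cannot be cancelled against $D$ in the denominator). One should also briefly note that for $P$ a torsion point or the point at infinity the statement is vacuous or trivial, and that the constants $c_1, c_2$ here are the same ones furnished by Lemma~\ref{heights_for_E^D}. None of this requires new input beyond the elementary height inequality $|h(\alpha) - h(\alpha/D)| \le \log D$ for $\alpha \in \bbq^\times$ and the $\bbq$-isomorphism invariance of $\hat h$.
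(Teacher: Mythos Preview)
Your proposal is correct and follows essentially the same route as the paper: transport $\hat h$ via the isomorphism $\phi_D$, use the elementary bound $\lvert h(\alpha)-h(\alpha/D)\rvert\le\log D$, and for the refinement observe that $x(P)>D$ forces $h(x(P)/D)\le h(x(P))$. The gcd discussion is more detailed than needed (and the tentative formula $h(x(P)/D)=\log\max\{|a|,|bD|\}$ is not literally right before cancellation, as you note), but you correctly fall back on the elementary height inequality, which is all that is required.
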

\begin{proof}
    Since $\phi_D$ is an isomorphism, $\hat{h}(P) = \hat{h}(\phi_D(P))$. Next, $h(\phi_D(P)) = h(x(P)/D)$ and $h(P) = h(x(P))$. We have 
    \[h(x(P)) - \log D \leq h(x(P)/D) \leq h(x(P)) + \log D,\]
    and if in addition $x(P)>D$, then 
    \[h(x(P)/D) \leq h(x(P)).\]
    By applying \eqref{Weil_canonical_1}, we obtain \eqref{Weil_canonical_2} and \eqref{Weil_canonical_3}.
\end{proof}

\begin{lemma}\label{canonical_height_lower_bound}
    Let $P \in E_D(\bbq)$. If $P$ is non-torsion, then 
    \begin{equation}\label{canonical_lower_bound_2}
        \hat{h}(P) \geq \frac{1}{4}\log D + c_3.
    \end{equation}
\end{lemma}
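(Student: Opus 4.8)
The plan is to deduce Lemma~\ref{canonical_height_lower_bound} from Lemma~\ref{heights_for_E^D} via the isomorphism $\phi_D$. The key observation is that canonical height is a birational invariant for isomorphic elliptic curves: since $\phi_D : E_D \to E^D$ is an isomorphism defined over $\bbq$, and the canonical height $\hat{h}$ depends only on the $\bbq$-isomorphism class together with the choice of the point (it is intrinsic to the curve, not the Weierstrass model, because it is built from the divisor class $(O)$), we have $\hat{h}(P) = \hat{h}(\phi_D(P))$. This is exactly the identity already invoked in the proof of Lemma~\ref{Weil_canonical_height}.

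So the proof is essentially one line. First I would note that if $P \in E_D(\bbq)$ is non-torsion, then $\phi_D(P) \in E^D(\bbq)$ is non-torsion as well, since $\phi_D$ is a group isomorphism. Then I apply \eqref{canonical_lower_bound_1} of Lemma~\ref{heights_for_E^D} to the point $\phi_D(P)$, obtaining $\hat{h}(\phi_D(P)) \geq \frac{1}{4}\log D + c_3$. Finally, using $\hat{h}(P) = \hat{h}(\phi_D(P))$ gives \eqref{canonical_lower_bound_2} directly, with the \emph{same} constant $c_3$.

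There is no real obstacle here; the only thing to be slightly careful about is the justification that $\hat{h}$ is preserved under $\phi_D$. If one wanted to be fully self-contained one could argue as in Lemma~\ref{Weil_canonical_height}: write $\hat{h}(P) = \lim_n h(2^n P)/4^n$ on $E_D$ and $\hat{h}(\phi_D(P)) = \lim_n h(2^n \phi_D(P))/4^n$ on $E^D$; since $\phi_D$ commutes with multiplication-by-$2^n$ and $h(\phi_D(Q)) = h(x(Q)/D) = h(x(Q)) + O(\log D)$ for every $Q$, the bounded discrepancy $O(\log D)$ is killed in the limit by the factor $4^{-n}$, so the two limits agree. But since this identity was already used and stated in the proof of Lemma~\ref{Weil_canonical_height}, I would simply cite it and keep the proof to a single sentence.
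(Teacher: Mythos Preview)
Your proposal is correct and follows exactly the same approach as the paper: use $\hat{h}(P)=\hat{h}(\phi_D(P))$ together with \eqref{canonical_lower_bound_1} applied to $\phi_D(P)$. The paper's own proof is in fact just those two sentences, so your argument (including the observation that $\phi_D$ preserves non-torsion) is the intended one.
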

\begin{proof}
    Since $\phi_D$ is an isomorphism, $\hat{h}(P) = \hat{h}(\phi_D(P))$. Applying \eqref{canonical_lower_bound_1} gives \eqref{canonical_lower_bound_2}.
\end{proof}

The next lemma concerns about torsion subgroups of $E_D$.

\begin{lemma}\label{torsion}
    For sufficiently large $D$, 
    \[E_D(\bbq)_{tors} \in \{0,\bbz/2\bbz,(\bbz/2\bbz)^2\}.\]
\end{lemma}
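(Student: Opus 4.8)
The plan is to classify the rational torsion of $E_D$ by combining Mazur's theorem with a reduction-modulo-$p$ argument that forces the torsion to be $2$-primary of small order once $|D|$ is large. First I would recall that by Mazur's theorem $E_D(\bbq)_{tors}$ is one of the fifteen groups $\bbz/N\bbz$ with $N \le 10$ or $N = 12$, or $\bbz/2\bbz \times \bbz/2N\bbz$ with $N \le 4$. The $2$-torsion of $E_D$ is governed by the cubic $x^3 + D^2 A x + D^3 B$, which up to the substitution $x \mapsto D x$ has the same splitting type over $\bbq$ as $x^3 + Ax + B$; hence whether $E_D(\bbq)[2]$ is $0$, $\bbz/2\bbz$, or $(\bbz/2\bbz)^2$ is independent of $D$ (for $D$ coprime to $\Delta$, which holds for $D$ large if we also discard the finitely many prime factors of $\Delta$). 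So the content of the lemma is to rule out any odd torsion and to rule out $\bbz/4\bbz$, $\bbz/8\bbz$, $\bbz/2\bbz \times \bbz/8\bbz$, etc., i.e.\ any point of order $3$ or of order $4$.

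The key step is a good-reduction argument. Fix an auxiliary prime $p$ (say $p = 5$, or a couple of small primes) not dividing $2 \Delta$. For all squarefree $D$ with $\gcd(D,p)=1$, the curve $E_D$ has good reduction at $p$, and reduction mod $p$ embeds the prime-to-$p$ part of $E_D(\bbq)_{tors}$ into $\widetilde{E_D}(\bbf_p)$. Now $\widetilde{E_D}$ is the quadratic twist of $\widetilde{E}$ by (the class of) $D$ in $\bbf_p^\times$, so as $D$ ranges over residues mod $p$ the reduced curve is one of only two curves over $\bbf_p$, namely $\widetilde E$ and its nontrivial quadratic twist; both have at most $p + 1 + 2\sqrt{p}$ points. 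Choosing $p$ so that neither $\#\widetilde E(\bbf_p)$ nor the twist's order is divisible by $3$ (such a $p$ exists and can be found effectively — for instance one checks small primes of good reduction), we conclude $E_D(\bbq)$ has no $3$-torsion for all $D$ coprime to $2p\Delta$. The finitely many excluded $D$ (those divisible by a prime factor of $2p\Delta$) are absorbed into "sufficiently large $D$"; more precisely, to handle \emph{all} large $D$ one uses two such auxiliary primes $p_1 \ne p_2$ with the no-$3$-divisibility property, so that every large $D$ is coprime to at least one of them. The same device, with a prime $p$ for which the relevant $\#\widetilde{E_D}(\bbf_p)$ is not divisible by $4$, kills $4$-torsion; a single pair of auxiliary primes can be chosen to handle both $3$- and $4$-torsion simultaneously. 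This leaves only $0$, $\bbz/2\bbz$, $(\bbz/2\bbz)^2$, and a priori $\bbz/2\bbz \times \bbz/4\bbz$ — but the last already contains a point of order $4$, so it is excluded too, giving exactly the three groups in the statement.

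An alternative, more self-contained route avoids auxiliary primes: a rational point of order $3$ on $E_D$ would give, via $\phi_D$ and the standard height lower bounds, a contradiction with the fact that a $3$-torsion point has $x$-coordinate satisfying $3x^4 + 6D^2 A x^2 + 12 D^3 B x - D^4 A^2 = 0$, whose roots have height growing like $\log D$ while torsion points have height $0$; similarly points of order $4$ are the halves of $2$-torsion points and one can bound their $x$-coordinates explicitly. I would present the reduction-mod-$p$ argument as the main line since it is cleanest, and remark on the height argument as an aside.

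The main obstacle I expect is purely bookkeeping: ensuring that the auxiliary primes can be chosen so that, between them, \emph{every} sufficiently large squarefree $D$ is coprime to one of them \emph{and} the corresponding reduced twist has order coprime to $6$ (or at least not divisible by $3$ or $4$). This is a finite check — it amounts to verifying that $\widetilde E$ has at least one good prime $p$ with $3 \nmid \#\widetilde E(\bbf_p)$ and $3 \nmid (2p+2-\#\widetilde E(\bbf_p))$, and likewise for $4$ — and it succeeds for every $E$ by a Chebotarev/counting argument, but writing it cleanly so the constant $D_0$ depends only on $E$ requires a little care.
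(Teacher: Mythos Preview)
The paper's own proof is a one-line citation of \cite{Hel04}[Lemma~4.2] for $E^D$, transported to $E_D$ via the isomorphism $\phi_D$; so any self-contained argument you give is already more than the paper supplies.

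Your primary reduction-modulo-$p$ argument has a real gap. You assert that one can choose a prime $p$ so that neither $\#\widetilde E(\mathbb{F}_p)$ nor the order of its nontrivial quadratic twist is divisible by $3$, but if $E(\bbq)$ itself carries a point of order $3$ --- for instance $E:y^2=x^3+1$, where $(0,1)$ has order $3$ --- then $3 \mid \#\widetilde E(\mathbb{F}_p)$ for \emph{every} good prime $p$, so no such $p$ exists. More generally, the condition you need is that the image of the mod-$3$ Galois representation in $\GL_2(\mathbb{F}_3)$ contain an element with no eigenvalue in $\{\pm 1\}$; this fails whenever the image sits inside a Borel subgroup, i.e.\ whenever $E$ admits a rational $3$-isogeny. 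The two-prime device only addresses the coprimality of $D$ with $p$ and does nothing to repair this obstruction. The identical problem recurs for $4$-torsion when $E$ has a rational $4$-isogeny.

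Your ``alternative, more self-contained route'' is the argument that actually works, and you should promote it to the main line. If $(x_0,y_0)\in E_D(\bbq)$ has order $n\ge 3$, then $u_0=x_0/D$ is a rational root of a division polynomial of $E$ that does not involve $D$, so $u_0$ ranges over a finite set depending only on $E$; the relation $y_0^2=D^3(u_0^3+Au_0+B)$ with $u_0^3+Au_0+B\ne 0$ then forces the squarefree integer $D$ to equal the squarefree part of that fixed rational number, pinning $D$ to finitely many values. This handles odd $n$ directly and $n=4$ after one application of the duplication formula, and is presumably the content of Helfgott's lemma.
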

\begin{proof}
    By \cite[Lemma~4.2]{Hel04}, $E^D(\bbq)_{tors} \in \{0,\bbz/2\bbz,(\bbz/2\bbz)^2\}$ for sufficiently large $D$. Since $E^D$ and $E_D$ are isomorphic over $\bbq$, the lemma is proved.
\end{proof}

We will now estimate $x(P+Q)$ for $P,Q \in E_D(\bbq)$.

\begin{lemma}\label{P+Q_positive_bound}
    Let $P,Q \in E_D(\bbq)$ and $MD \leq x(P)<x(Q)$. Suppose $y(P)y(Q)>0$. Then 
    \[0.19x(P) \leq x(P+Q) \leq 2x(P).\]
\end{lemma}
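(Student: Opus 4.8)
The plan is to work with the coordinates directly through the group law, after normalizing signs. Write $x_1=x(P)$, $x_2=x(Q)$. Since $y(P)y(Q)>0$ neither coordinate vanishes, and replacing $(P,Q)$ by $(-P,-Q)$ changes none of $x_1$, $x_2$, $x(P+Q)$; so I may assume $y_1:=y(P)>0$ and $y_2:=y(Q)>0$. Put $f(x)=x^3+D^2Ax+D^3B$, so $y_i=\sqrt{f(x_i)}$. The basic input is that on the range $x\ge MD$ the curve is uniformly close to $y^2=x^3$: from $x_i\ge MD$, $M\ge10\sqrt{|A|}$, $M\ge5\sqrt[3]{|B|}$ one gets $\bigl|f(x_i)/x_i^3-1\bigr|\le|A|/M^2+|B|/M^3<\tfrac{1}{50}$, hence $0.99\,x_i^{3/2}\le y_i\le1.01\,x_i^{3/2}$. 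I record $b_i:=y_i+x_i^{3/2}\asymp x_i^{3/2}$ and $a_i:=y_i-x_i^{3/2}=\dfrac{D^2Ax_i+D^3B}{b_i}$.

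From the group law $x(P+Q)=\lambda^2-x_1-x_2$ with $\lambda=\dfrac{y_2-y_1}{x_2-x_1}=\dfrac{x_1^2+x_1x_2+x_2^2+D^2A}{y_1+y_2}$ (clear denominators using $y_2^2-y_1^2=f(x_2)-f(x_1)$). Comparing with the model slope $\lambda_0:=\dfrac{x_2^{3/2}-x_1^{3/2}}{x_2-x_1}=\dfrac{x_1^2+x_1x_2+x_2^2}{x_1^{3/2}+x_2^{3/2}}$, which satisfies the algebraic identity $\lambda_0^2-x_1-x_2=\dfrac{x_1x_2}{(\sqrt{x_1}+\sqrt{x_2})^2}$, I obtain the exact decomposition
\[x(P+Q)=\frac{x_1x_2}{(\sqrt{x_1}+\sqrt{x_2})^2}+\frac{\mathcal{E}}{(x_2-x_1)^2},\qquad \mathcal{E}:=(y_2-y_1)^2-(x_2^{3/2}-x_1^{3/2})^2.\]
With $s=\sqrt{x_2/x_1}>1$ the first term equals $x_1\,s^2/(1+s)^2$, an increasing function of $s$, so it lies in $[\tfrac14 x_1,\,x_1)$. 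Hence the lemma reduces to showing $\bigl|\mathcal{E}/(x_2-x_1)^2\bigr|\le0.05\,x_1$, which gives $x(P+Q)\in[\tfrac14 x_1-0.05x_1,\ x_1+0.05x_1]\subset[0.19\,x_1,\,2x_1]$.

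The one delicate point is this error estimate, because crude bounds are too weak here: both $\lambda^2$ and $x_1+x_2$ have size $\asymp x_2$, which may be arbitrarily large relative to $x_1$, so bounding the error in $\lambda^2$ only up to a fixed proportion of $\lambda^2$ is useless; and as $x_2\to x_1$ the factor $(x_2-x_1)^{-2}$ threatens a blow-up. Both are handled by exposing the cancellation in $\mathcal{E}$: first
\[\mathcal{E}=\bigl((y_2-y_1)-(x_2^{3/2}-x_1^{3/2})\bigr)\bigl((y_2-y_1)+(x_2^{3/2}-x_1^{3/2})\bigr)=(a_2-a_1)(x_2-x_1)(\lambda+\lambda_0),\]
and then $a_2-a_1$ is itself divisible by $x_2-x_1$ with a controllably small quotient: using $a_i=(D^2Ax_i+D^3B)/b_i$ together with the identities $x_2b_1-x_1b_2=(x_2-x_1)\bigl(y_1-x_1\lambda-\tfrac{x_1x_2}{\sqrt{x_1}+\sqrt{x_2}}\bigr)$ and $b_1-b_2=-(x_2-x_1)(\lambda+\lambda_0)$ gives
\[\frac{\mathcal{E}}{(x_2-x_1)^2}=\frac{a_2-a_1}{x_2-x_1}\,(\lambda+\lambda_0)=\frac{\lambda+\lambda_0}{b_1b_2}\Bigl(D^2A\bigl(y_1-x_1\lambda-\tfrac{x_1x_2}{\sqrt{x_1}+\sqrt{x_2}}\bigr)-D^3B(\lambda+\lambda_0)\Bigr).\]

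In this last form all of the $x_2$-dependence is harmless: $b_1b_2\asymp(x_1x_2)^{3/2}$, $\lambda+\lambda_0\asymp\sqrt{x_2}$, and by the triangle inequality $\bigl|y_1-x_1\lambda-\tfrac{x_1x_2}{\sqrt{x_1}+\sqrt{x_2}}\bigr|\le y_1+x_1\lambda+\tfrac{x_1x_2}{\sqrt{x_1}+\sqrt{x_2}}\ll x_1\sqrt{x_2}$, whence
\[\Bigl|\frac{\mathcal{E}}{(x_2-x_1)^2}\Bigr|\ll\frac{D^2|A|}{\sqrt{x_1x_2}}+\frac{D^3|B|}{x_1^{3/2}\sqrt{x_2}}\le\frac{D^2|A|}{x_1}+\frac{D^3|B|}{x_1^2}\le(\text{const})\Bigl(\frac{|A|}{M^2}+\frac{|B|}{M^3}\Bigr)x_1,\]
using $x_2\ge x_1\ge MD$. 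The part that needs real care is keeping the numerical constants honest through this chain so that the right-hand side actually stays below $\tfrac14 x_1-0.19\,x_1$; with $M\ge10\sqrt{|A|}$ and $M\ge5\sqrt[3]{|B|}$ a short computation pins it down to $\le0.05\,x_1$, completing the argument.
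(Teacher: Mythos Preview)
Your argument is correct, but it takes a different route from the paper's. The paper normalizes by $x(P)$, setting $\lambda=x(Q)/x(P)$, $a=D^2A/x(P)^2$, $b=D^3B/x(P)^3$, so that
\[
\frac{x(P+Q)}{x(P)}=\frac{(\lambda+a)(\lambda+1)+2b-2\sqrt{\lambda^3+a\lambda+b}\sqrt{1+a+b}}{(1-\lambda)^2},
\]
and then reduces the lemma to a one-variable calculus problem: show this expression lies in $[0.19,2]$ for all $\lambda\ge1$ and $|a|,|b|\le0.01$. This is done in the appendix (Lemmas~A.1 and~A.2) by rationalizing to remove the apparent singularity at $\lambda=1$ and checking polynomial positivity. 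Your approach is instead perturbative: you identify the main term $x_1x_2/(\sqrt{x_1}+\sqrt{x_2})^2\in(\tfrac14 x_1,x_1)$ coming from the model $y^2=x^3$, and bound the error by exposing two layers of cancellation in $\mathcal E/(x_2-x_1)^2$. The paper's method is more mechanical and easier to verify line by line; yours explains transparently \emph{why} the answer is close to $[\tfrac14,1]\cdot x_1$ and makes the dependence on $A,B,M$ visible.

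One caution: the final step, where you invoke the triangle inequality on $|y_1-x_1\lambda-\tfrac{x_1x_2}{\sqrt{x_1}+\sqrt{x_2}}|$ and then assert ``a short computation pins it down to $\le0.05\,x_1$'', is tight. If one is careless with the implicit constants (e.g.\ using $\lambda+\lambda_0\ll\sqrt{x_2}$ with constant $4$ or $5$ rather than the sharper $\lambda_0\le\tfrac32\sqrt{x_2}$, $\lambda\le1.022\,\lambda_0$), the resulting bound overshoots $0.06\,x_1$ and the argument fails. Tracking constants carefully---$b_1b_2\ge3.96(x_1x_2)^{3/2}$, $\lambda+\lambda_0\le2.03\lambda_0\le2.03(\sqrt{x_1}+\sqrt{x_2})$, and the worst case occurring near $x_2/x_1\to1$---one finds the error is bounded by roughly $2.4\,|A|/M^2+2.4\,|B|/M^3\le0.043$, so $0.05\,x_1$ does hold. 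It would strengthen the write-up to display at least the key numerical constants rather than leave them implicit.
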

\begin{proof}
    Recall that 
    \[x(P+Q) = \frac{(x(P)x(Q)+D^2A)(x(P)+x(Q))+2D^3B - 2y(P)y(Q)}{(x(P)-x(Q))^2}.\]
    Let $\lambda = x(Q)/x(P)$, $a=D^2A/x(P)^2$, $b=D^3B/x(P)^3$. Then 
    \[x(P+Q) = \frac{(\lambda+a)(\lambda+1)+2b-2\sqrt{\lambda^3+a\lambda+b}\sqrt{1+a+b}}{(1-\lambda)^2}x(P).\]
    By our choice of $M$, $\lvert a \rvert,\lvert b \rvert \leq 0.01$. Thus by Lemma~\ref{f(x)_lower_bound} and Lemma~\ref{f(x)_upper_bound}, 
    \[0.19x(P) \leq x(P+Q) \leq 2x(P).\]
\end{proof}

\begin{lemma}\label{P+Q_negative_bound}
    Let $P,Q \in E_D(\bbq)$ and $MD \leq x(P)<x(Q)$. Suppose $y(P)y(Q)<0$. Then 
    \[x(P) \leq x(P+Q) \leq \frac{(2\mu+1)^2}{(\mu-1)^2}x(P)\]
    where $\mu \leq x(Q)/x(P)$
\end{lemma}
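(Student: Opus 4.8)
The plan is to follow the same strategy as Lemma~\ref{P+Q_positive_bound}: rescale the addition formula for $x(P+Q)$ by powers of $x(P)$ so that the small coefficients $a = D^2A/x(P)^2$ and $b = D^3B/x(P)^3$ satisfy $|a|,|b| \leq 0.01$ (this holds by the choice of $M$ and the hypothesis $MD \leq x(P)$), and then bound the resulting function of $\lambda = x(Q)/x(P) > 1$. Writing $\lambda = x(Q)/x(P)$, the addition formula gives
\[
x(P+Q) = \frac{(\lambda+a)(\lambda+1)+2b+2\sqrt{\lambda^3+a\lambda+b}\sqrt{1+a+b}}{(1-\lambda)^2}\,x(P),
\]
where now the cross term enters with a $+$ sign because $y(P)y(Q)<0$ forces us to take the product of square roots with the opposite sign convention. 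So the whole task reduces to estimating
\[
g(\lambda,a,b) = \frac{(\lambda+a)(\lambda+1)+2b+2\sqrt{\lambda^3+a\lambda+b}\sqrt{1+a+b}}{(1-\lambda)^2}
\]
for $\lambda > 1$ and $|a|,|b| \leq 0.01$.

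First I would establish the lower bound $x(P+Q) \geq x(P)$, i.e. $g \geq 1$. Cross-multiplying, this is equivalent to showing $(\lambda+a)(\lambda+1)+2b+2\sqrt{\lambda^3+a\lambda+b}\sqrt{1+a+b} \geq (\lambda-1)^2$, which after expanding becomes $2\sqrt{\lambda^3+a\lambda+b}\sqrt{1+a+b} \geq \lambda^2 - 2\lambda + 1 - \lambda^2 - \lambda - a\lambda - a - 2b = -3\lambda + 1 - a\lambda - a - 2b$; since $\lambda > 1$ and the error terms are tiny, the right-hand side is negative while the left-hand side is manifestly nonnegative (for $\lambda > 1$ the radicand $\lambda^3 + a\lambda + b$ is positive as $\lambda^3$ dominates), so the inequality is immediate. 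This is the easy direction, and I expect it to be essentially free.

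The harder direction is the upper bound $g(\lambda,a,b) \leq \frac{(2\mu+1)^2}{(\mu-1)^2}$ where $\mu \leq \lambda$. Here the main obstacle is that $g$ is \emph{not} monotone and in fact blows up as $\lambda \to 1^+$, so a uniform constant is impossible — this is why the statement carries the $\mu$-dependent factor, which is designed exactly to absorb the singularity at $\lambda = 1$. The strategy is: since $\lambda \geq \mu$, control the numerator from above by something like $(\lambda + |a|)(\lambda+1) + 2|b| + 2\sqrt{(1+|a|/\lambda^2 + |b|/\lambda^3)}\,\lambda^{3/2}\sqrt{1+|a|+|b|}$, and using $|a|,|b| \leq 0.01$ bound each factor crudely, e.g. $\sqrt{\lambda^3+a\lambda+b} \leq 1.01\lambda^{3/2} \leq \cdots \leq 2\lambda$ for $\lambda \geq 1$ (being a bit careful: $\lambda^{3/2} \le \lambda^2$ for $\lambda\ge1$ is too lossy near $\lambda=1$, so one should instead keep $\sqrt{\lambda^3+a\lambda+b}\sqrt{1+a+b} \le \lambda\sqrt{\lambda}\cdot 1.02 \le 1.02\,\lambda^{2}$... ) and similarly $(\lambda+a)(\lambda+1)+2b \le 1.01(\lambda+1)^2$, to arrive at a numerator bounded by roughly $(2\lambda+1)^2$ up to slack, and a denominator $(1-\lambda)^2 = (\lambda-1)^2$. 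The remaining point is to replace $(\lambda-1)^2$ in the denominator by $(\mu-1)^2$, which is valid since $t \mapsto \frac{(2t+1)^2}{(t-1)^2}$ is decreasing on $(1,\infty)$ and $\mu \leq \lambda$, so $\frac{(2\lambda+1)^2}{(\lambda-1)^2} \leq \frac{(2\mu+1)^2}{(\mu-1)^2}$. Stitching the numerator estimate together with this monotonicity step yields the claim; the only real care needed is to keep the constants $0.01$ from spoiling the clean $(2\lambda+1)^2$ shape, which may force a slightly more generous reading of "$\leq$" or a mild tightening of the constant $0.01$ bound — but as a plan this is the route I would take.
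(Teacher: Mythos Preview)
Your proposal is correct and matches the paper's approach: the same normalization to $g(\lambda,a,b)$, the same lower bound argument (the paper simply observes $(\lambda+a)(\lambda+1)+2b \geq \lambda^2+0.99\lambda+0.97 \geq (\lambda-1)^2$ directly, which is your computation rearranged), and the same upper bound strategy of showing the numerator is at most $(2\lambda+1)^2$ and then invoking the monotonicity of $(2t+1)^2/(t-1)^2$ on $(1,\infty)$ to pass from $\lambda$ to $\mu$. Your worry that $\lambda^{3/2} \leq \lambda^2$ is ``too lossy near $\lambda=1$'' is unfounded --- it yields a numerator bound of roughly $3.04\lambda^2 + 1.01\lambda + 0.03$, comfortably below $(2\lambda+1)^2 = 4\lambda^2+4\lambda+1$; the paper's appendix (Lemma~\ref{g(x)_upper_bound}) uses instead the estimate $\lambda^3+a\lambda+b \leq (\lambda^2+0.5)^2$, but either route works with room to spare.
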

\begin{proof}
    Recall that 
    \[x(P+Q) = \frac{(x(P)x(Q)+D^2A)(x(P)+x(Q))+2D^3B - 2y(P)y(Q)}{(x(P)-x(Q))^2}.\]
    Let $\lambda = x(Q)/x(P)$, $a=D^2A/x(P)^2$, $b=D^3B/x(P)^3$. Then 
    \[x(P+Q) = \frac{(\lambda+a)(\lambda+1)+2b+2\sqrt{\lambda^3+a\lambda+b}\sqrt{1+a+b}}{(1-\lambda)^2}x(P).\]
    By our choice of $M$, $\lvert a \rvert,\lvert b \rvert \leq 0.01$. Thus by Lemma~\ref{g(x)_lower_bound} and Lemma~\ref{g(x)_upper_bound}, 
    \[x(P) \leq x(P+Q) \leq \frac{(2\lambda+1)^2}{(\lambda-1)^2}x(P).\]
    Since $(2x+1)^2/(x-1)^2$ is strictly decreasing for $x>1$, we have 
    \[\frac{(2\lambda+1)^2}{(\lambda-1)^2} \leq \frac{(2\mu+1)^2}{(\mu-1)^2}.\]
\end{proof}

We next estimate $x(3P)$ for $P \in E_D(\bbq)$.

\begin{lemma}\label{3P_bound}
    Let $P \in E_D(\bbq)$ and $MD \leq x(P)$. Then 
    \[0.01x(P) \leq x(3P) \leq 0.27x(P).\]
\end{lemma}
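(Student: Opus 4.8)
The plan is to reduce the claim about $x(3P)$ to an elementary estimate for the triplication polynomial, exactly in the spirit of Lemmas~\ref{P+Q_positive_bound}--\ref{P+Q_negative_bound}. Recall that on the Weierstrass model $y^2 = x^3 + D^2Ax + D^3B$ the $x$-coordinate of $3P$ is a rational function of $x = x(P)$ alone, namely
\[
x(3P) = \frac{\psi_2^2\psi_4 \;-\; \psi_1\psi_3^2}{\psi_2^2\psi_3^2}\bigg|_{P} = x - \frac{\phi_3(x)}{\psi_3(x)^2},
\]
where $\psi_3(x) = 3x^4 + 6D^2Ax^2 + 12D^3Bx - D^4A^2$ is the (squared-free part of the) $3$-division polynomial and $\phi_3$ has degree $9$ in $x$. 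First I would homogenize: set $a = D^2A/x(P)^2$ and $b = D^3B/x(P)^3$, so that $|a|,|b| \le 0.01$ by the choice of $M$ together with $MD \le x(P)$ (the same reduction used in the two preceding lemmas). After dividing numerator and denominator by appropriate powers of $x(P)$, we get
\[
\frac{x(3P)}{x(P)} \;=\; 1 - \frac{p(a,b)}{q(a,b)},
\]
where $q(a,b) = (3 + 6a + 12b - a^2)^2 = \psi_3(x)^2/x(P)^8$ and $p(a,b)$ is an explicit polynomial in $a,b$ with rational coefficients and $p(0,0) = $ (the value making $x(3P)/x(P) = 1 - 8/9 = 1/9$ when $a=b=0$, since for $E:y^2=x^3$ one has $x(3P) = x(P)/9$).

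Next I would record the exact value at $a = b = 0$: since the curve $y^2 = x^3$ has $x(nP) = x(P)/n^2$, we get $x(3P)/x(P) = 1/9 \approx 0.111$, comfortably inside the asserted interval $[0.01, 0.27]$. Then the whole statement reduces to a continuity/perturbation estimate: the function $(a,b) \mapsto 1 - p(a,b)/q(a,b)$ is a ratio of polynomials, and on the compact box $|a|,|b| \le 0.01$ the denominator $q(a,b) = (3 + 6a + 12b - a^2)^2$ is bounded away from zero (it lies in roughly $(2.8)^2$ to $(3.2)^2$), so the function is Lipschitz there with an explicit constant. Bounding $|a| , |b| \le 0.01$ then forces $x(3P)/x(P)$ into a small neighbourhood of $1/9$, and one checks numerically that this neighbourhood is contained in $[0.01, 0.27]$. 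Concretely I would split this into two auxiliary lemmas in the style already used in the paper — one giving the lower bound $0.01$ for the function $1 - p(a,b)/q(a,b)$ on the box, and one giving the upper bound $0.27$ — each proved by crude interval arithmetic on the coefficients of $p$ and $q$.

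The one point requiring care — the \textbf{main obstacle} — is the bookkeeping of the triplication formula: writing $\phi_3$ and $\psi_3$ correctly with the twisted coefficients $D^2A$, $D^3B$ and verifying that after homogenization every monomial in $p(a,b)$ and $q(a,b)$ genuinely carries at least one factor of $a$ or $b$ except for the constant terms, so that the $|a|,|b|\le 0.01$ bound really does pin the quotient near $1/9$. This is purely a polynomial identity to be checked once; there is no analytic subtlety. A secondary nuisance is confirming that $\psi_3(x) \ne 0$ at our point so that $3P$ is well-defined with a finite $x$-coordinate and $x(3P) \ne 0$ — but this is immediate since $q(a,b) \ge (2.8)^2 > 0$ on the box, so $\psi_3(x(P))^2 = x(P)^8 q(a,b) \ne 0$, and similarly $x(3P)/x(P) \ge 0.01 > 0$ shows $x(3P) \ne 0$. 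With those identities in hand, the bound $0.01\,x(P) \le x(3P) \le 0.27\,x(P)$ follows exactly as the bounds in Lemmas~\ref{P+Q_positive_bound} and \ref{P+Q_negative_bound} followed from their respective one-variable estimates.
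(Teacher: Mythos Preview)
Your proposal is correct and follows essentially the same route as the paper: write $x(3P)/x(P)$ via the triplication (division-polynomial) formula, homogenize by $a = D^2A/x(P)^2$, $b = D^3B/x(P)^3$ so that $|a|\le 0.01$, $|b|\le 0.008$ from the choice of $M$, and then do crude interval arithmetic on the resulting rational function of $(a,b)$ --- the paper writes it directly as $c/(3+6a+12b-a^2)^2$ and bounds $c\in[0.1,2.1]$ and the base of the square in $[2.8,3.156]$, which is exactly the numerator/denominator estimate you describe. Your perturbation framing around the value $1/9$ at $a=b=0$ is a nice sanity check but not a genuinely different argument; the one cosmetic slip is your displayed identity for $x(3P)$ (the standard form is simply $x(3P)=\phi_3(x)/\psi_3(x)^2$ with $\phi_3 = x\psi_3^2 - \psi_2\psi_4$, a polynomial in $x$ alone), which you should correct when writing it out.
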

\begin{proof}
    We have 
    \[x(3P) = \frac{\phi_3(P)}{\psi_3(P)^2}\]
    where $\psi_n(x)$ is an $n$-th division polynomial and $\phi_n = x\psi_n(x)^2-\psi_{n+1}(s)\psi_{n-1}(x)$. Thus  
    \[\psi_3(x) = 3x^4+6D^2Ax^2+12D^3Bx-D^4A^2\]
    and 
    \begin{align*}
        \phi_3(x) = &x^9 - 12D^2Ax^7-96D^3Bx^6+30D^4A^2x^5-24D^5ABx^4+D^6(36A^3+48B^2)x^3 \\
        &+ 48D^7A^2Bx^2+D^8(9A^4+96AB^2)x+D^9(8A^3B+64B^3).
    \end{align*}
    Let $a=D^2A/x(P)$, $b=D^3B/x(P)$. Then 
    \[x(3P) = \frac{c}{(3+6a+12b-a^2)^2} x(P)\]
    where 
    \begin{align*}
        c=&1-12a-96b+30a^2-24ab+(36a^3+48b^2)\\
        &+48a^2b+(9a^4+96ab^2)+(8a^3b+64b^3).
    \end{align*}
    By our assumption, $\lvert a \rvert \leq 0.01$, $\lvert b \rvert \leq 0.008$. Then some calculations show 
    \[0.1 \leq c \leq 2.1\]
    and 
    \[2.8 \leq 3+6a+12b-a^2 \leq 3.156.\]
    Thus 
    \[0.01 \leq \frac{0.1}{3.156^2} \leq\frac{c}{(3+6a+12b-a^2)^2} \leq \frac{2.1}{2.8^2} \leq 0.27.\]
    and hence 
    \[0.01x(P) \leq x(3P) \leq 0.27x(P).\]
\end{proof}

Finally, we estimate heights of integral points on $E_D$. This lemma implies the gap principle we will use later. 

\begin{lemma}\label{Weil_height_estimate}
    Let $P,Q$ be integral points on $E_D$ satisfying $MD \leq x(P)<x(Q)$. Then  
    \[h(P+Q) \leq h(P)+2h(Q)+2.9.\]
\end{lemma}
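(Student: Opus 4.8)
The plan is to compute $x(P+Q)$ explicitly from the addition law on $E_D$ and to bound separately the numerator and the denominator as integers. Since $P,Q$ are integral points with $0<x(P)<x(Q)$, all of $x(P),x(Q),y(P),y(Q)$ lie in $\bbz$, and $h(P)=\log x(P)$, $h(Q)=\log x(Q)$; also $P+Q$ is an affine point because $P+Q=O$ would force $x(P)=x(Q)$. Writing
\[
x(P+Q)=\frac{(x(P)x(Q)+D^2A)(x(P)+x(Q))+2D^3B-2y(P)y(Q)}{(x(P)-x(Q))^2}=:\frac{N}{(x(P)-x(Q))^2},
\]
we have $N\in\bbz$, so by the elementary bound $h(a/b)\le\log\max\{|a|,|b|\}$ for integers $a,b$ with $b\ne 0$, it suffices to show $\max\{|N|,(x(P)-x(Q))^2\}\le C\,x(P)x(Q)^2$ for some absolute constant $C<e^{2.9}$.

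First I would dispose of the denominator: from $0<x(P)<x(Q)$ we get $(x(P)-x(Q))^2<x(Q)^2\le x(P)x(Q)^2$. For the numerator I would use the hypotheses $MD\le x(P)<x(Q)$, $M\ge 10\sqrt{|A|}$, and $M\ge 5\sqrt[3]{|B|}$ to control the twist contributions: these give $|A|D^2\le x(P)^2/100$ and $|B|D^3\le x(P)^3/125$. Hence $|x(P)x(Q)+D^2A|\le 1.01\,x(P)x(Q)$ and $|2D^3B|\le 0.02\,x(P)x(Q)^2$, while the curve equation $y(P)^2=x(P)^3+D^2Ax(P)+D^3B$ yields $y(P)^2\le 1.02\,x(P)^3$ and likewise for $Q$, so $|2y(P)y(Q)|\le 2.1\,(x(P)x(Q))^{3/2}\le 2.1\,x(P)x(Q)^2$ using $\sqrt{x(P)x(Q)}\le x(Q)$. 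Summing the three pieces gives $|N|\le C\,x(P)x(Q)^2$ with $C$ around $4$.

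Putting these together,
\[
h(P+Q)\le\log\max\{|N|,(x(P)-x(Q))^2\}\le\log\bigl(C\,x(P)x(Q)^2\bigr)=h(P)+2h(Q)+\log C\le h(P)+2h(Q)+2.9.
\]
I do not expect a genuine obstacle here; the argument is routine bookkeeping once the two reductions above are in place. The only points requiring a little care are that the sign ambiguity $\pm 2y(P)y(Q)$ (i.e.\ whether the formula computes $x(P+Q)$ or $x(P-Q)$) is harmless since only the triangle inequality is used, and that $N$ is genuinely an integer so the height estimate on $N/(x(P)-x(Q))^2$ applies without needing to reduce the fraction. The constant $2.9$ is deliberately wasteful; the same computation in fact gives $h(P+Q)\le h(P)+2h(Q)+\log 4+o(1)$.
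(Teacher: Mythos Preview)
Your proof is correct and follows essentially the same approach as the paper: write out the addition formula for $x(P+Q)$ and bound the three pieces of the numerator and the denominator separately using $MD\le x(P)<x(Q)$. The only cosmetic difference is that the paper phrases the bounds via the abstract height inequalities $h(x+y)\le\max\{h(x),h(y)\}+\log 2$ and $h(xy)\le h(x)+h(y)$ (arriving at the constant $\log 18\approx 2.89$) rather than your direct integer-absolute-value estimates.
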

\begin{proof}
    We have 
    \begin{align*}
        x(P+Q) &= \left(\frac{y(P)-y(Q)}{x(P)-x(Q)}\right)^2 - (x(P)+x(Q)) \\
        &= \frac{y(P)^2 + y(Q)^2 - 2y(P)y(Q) - x(P)^3-x(Q)^3+x(P)x(Q)(x(P)+x(Q))}{(x(P)-x(Q))^2} \\
        &= \frac{(x(P)x(Q)+D^2A)(x(P)+x(Q))+2D^3B - 2y(P)y(Q)}{(x(P)-x(Q))^2}.
    \end{align*}
    By using the estimates 
    \[h(x+y) \leq \max\{h(x),h(y)\} + \log 2,\quad h(xy) \leq h(x)+h(y),\]
    and the choice of $M$, we have 
    \begin{gather*}
        h((x(P)x(Q)+D^2A)(x(P)+x(Q))) \leq h(x(P))+2h(x(Q))+\log 6, \\
        h(2D^3B) \leq h(x(P))+2h(x(Q))+\log 6, \\
        h(2y(P)y(Q)) \leq h(x(P))+2h(x(Q))+\log 6.
    \end{gather*}
    Therefore, we have 
    \[h((x(P)x(Q)+D^2A)(x(P)+x(Q))+2D^3B - 2y(P)y(Q))) \leq h(x(P))+2h(x(Q))+\log 18.\]
    Since $x(P)<x(Q)$, 
    \[2h(x(P)-x(Q)) \leq 2h(x(Q)) \leq h(x(P))+2h(x(Q))+\log 18.\]
    Hence, 
    \[h(x(P+Q)) \leq h(x(P))+2h(x(Q))+\log 18.\]
\end{proof}

\section{Spherical codes}

Let $r$ be a positive integer and $0<\theta<2\pi$ be an angle. Let $\Omega_r$ be the unit sphere in $\bbr^r$ and let $X$ be a finite subset of $\Omega_r$. We call $X$ a spherical code if $\langle x,y \rangle \leq \cos \theta$ for every distinct $x,y \in X$. We write $A(r,\theta)$ for the maximum size of the spherical code $X$.

As explained below, we must establish a bound for $A(r,\theta)$ with fixed $0<\theta<2\pi$. There are many bounds for $A(r,\theta)$ (see \cite{CS99}[Chapter~1]). Among them, we shall make use of only two bounds; one for $0<\theta<\pi/2$ and one for $\theta>\pi/2$. When $0<\theta<\pi/2$, we will use the bound given by Kabatiansky and Levenshtein.

\begin{theorem}\label{spherical_code_bound1}
    For fixed $0<\theta<\pi/2$, 
    \[\frac{1}{r}\log A(r,\theta) \leq \frac{1+\sin \theta}{2\sin \theta}\log\frac{1+\sin \theta}{2\sin \theta} - \frac{1-\sin \theta}{2\sin \theta}\log \frac{1-\sin \theta}{2\sin \theta} + o(1),\]
    where $o(1) \rightarrow 0$ as $r \rightarrow \infty$ and $o(1)$ is explicit for $\theta$.
    
    In particular, 
    \[A(r,\theta) \ll \left[\exp\left(\frac{1+\sin \theta}{2\sin \theta}\log\frac{1+\sin \theta}{2\sin \theta} - \frac{1-\sin \theta}{2\sin \theta}\log \frac{1-\sin \theta}{2\sin \theta}+0.001\right)\right]^r.\]
\end{theorem}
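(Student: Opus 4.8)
I would reconstruct the Kabatiansky--Levenshtein bound; it splits into a soft linear‑programming inequality and a (harder) asymptotic analysis of orthogonal polynomials, from which the explicit exponent emerges.

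\emph{Step 1: the Delsarte linear‑programming bound.} For each integer $k\ge 0$ let $G_k^{(r)}$ be the Gegenbauer (ultraspherical) polynomial of degree $k$, normalised by $G_k^{(r)}(1)=1$. The key positivity fact is that these are positive‑definite kernels on $\Omega_r$: one has $\sum_{x,y\in X}G_k^{(r)}(\langle x,y\rangle)\ge 0$ for every finite $X\subset\Omega_r$. Hence, if $f(t)=\sum_{k\ge 0}f_kG_k^{(r)}(t)$ is a polynomial with $f_0>0$, $f_k\ge 0$ for $k\ge 1$, and $f(t)\le 0$ on $[-1,\cos\theta]$, then evaluating $\sum_{x,y\in X}f(\langle x,y\rangle)$ for a spherical code $X$ and isolating the diagonal gives $|X|f(1)\ge f_0|X|^2$, so
\[A(r,\theta)\le \frac{f(1)}{f_0}.\]
Everything therefore reduces to exhibiting, for each fixed $\theta$ and each $r$, a feasible $f$ whose ratio $f(1)/f_0$ grows at the prescribed exponential rate.

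\emph{Step 2: certificate, cap reduction, and the explicit exponent.} Following Kabatiansky--Levenshtein one does not work with $\Omega_r$ directly but passes to a spherical cap: averaging $|X\cap gC|$ over rotations $g$ shows that a cap $C$ of angular radius $\psi$ contains at least $|X|\cdot\sigma_r(\psi)$ points of a suitable rotate of $X$, where $\sigma_r(\psi)$ is the normalised measure of $C$ and satisfies $\tfrac1r\log\sigma_r(\psi)\to\log\sin\psi$. A $\theta$‑separated configuration inside the cap is then controlled by a one‑variable linear program against the Jacobi weight $(1-t)^{\alpha}(1+t)^{\beta}$, whose optimal certificate is a Christoffel--Darboux type kernel built from Jacobi polynomials $P_k^{(\alpha,\beta)}$ with $k\to\infty$. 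Feeding the classical asymptotics for the largest root and for the value at $1$ of these polynomials into $f(1)/f_0$, and optimising over the free parameters (the cap radius $\psi$ and the exponents $\alpha,\beta$), produces
\[\frac1r\log A(r,\theta)\le \frac{1+\sin\theta}{2\sin\theta}\log\frac{1+\sin\theta}{2\sin\theta}-\frac{1-\sin\theta}{2\sin\theta}\log\frac{1-\sin\theta}{2\sin\theta}+o(1);\]
the factor $1/\sin\theta$ is precisely the contribution of $\sigma_r(\psi)^{-1}$ at the optimal cap. Since every limit used (the root asymptotics, the measure estimate, the polynomial values) carries an explicit rate once $\theta$ is fixed, the $o(1)$ is effective in $\theta$, as claimed. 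The ``in particular'' clause is then formal: choose $r_0=r_0(\theta)$ with $o(1)\le 0.001$ for $r\ge r_0$, so that $A(r,\theta)\le\big[\exp(E(\theta)+0.001)\big]^r$ for $r\ge r_0$, and absorb the finitely many bounded values $A(r,\theta)$ with $r<r_0$ into the implied constant of $\ll$.

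\emph{Main obstacle.} Step 1 is purely formal; the real work is Step 2 --- producing a certificate that is genuinely feasible (non‑positive on $[-1,\cos\theta]$ with non‑negative Gegenbauer coefficients) and whose ratio $f(1)/f_0$ has exponential rate \emph{exactly} the stated entropy‑type expression rather than merely a comparable one, all while keeping the error explicit in the fixed angle $\theta$. This is where the asymptotics of Jacobi polynomials and the optimisation over the cap radius carry the weight of the argument.
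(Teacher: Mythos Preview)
The paper does not actually prove this theorem: its entire proof reads ``See \cite{KL78}.'' Your proposal goes well beyond this by sketching the Kabatiansky--Levenshtein argument itself (Delsarte linear-programming bound, cap reduction, Jacobi polynomial asymptotics), which is a faithful high-level outline of what is in \cite{KL78}; your derivation of the ``in particular'' clause from the asymptotic statement is also correct. There is nothing to compare approach-wise --- the paper treats the bound as a black box, while you have described its contents.
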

\begin{proof}
    See \cite{KL78}.
\end{proof}

When $\theta>\pi/2$, then $A(r,\theta)$ is bounded independently of $r$.

\begin{theorem}\label{spherical_code_bound2}
    For fixed $\theta>\pi/2$, 
    \[A(r,\theta) \ll 1.\]
\end{theorem}
\begin{proof}
    Let $X = \{x_1,\ldots,x_n\}$ be a spherical code with respect to $r$ and $\theta$. From  
    \[0 \leq \langle x_1+\cdots+x_n,x_1+\cdots+x_n \rangle \leq n + n(n-1)\cos \theta,\]
    we obtain 
    \[n \leq 1-\frac{1}{\cos \theta}.\]
    Therefore, 
    \[A(r,\theta) \leq 1-\frac{1}{\cos \theta}.\]
\end{proof}

Now we will briefly explain why a bound for $A(r,\theta)$ is required in deriving a bound for the integral points.

Suppose $E_D(\bbq)$ has rank $r$. Then $E_D(\bbq) \otimes_\bbz \bbr$ is isomorphic to $\bbr^r$ and the canonical height $\hat{h}$ on $E_D(\bbq)$ extends $\bbr$-linearly to a positive definite quadratic form on $E_D(\bbq) \otimes_\bbz \bbr \cong \bbr^r$. Therefore, we have an associated inner product $\langle \:\cdot\:.\:\cdot\: \rangle$ on $E_D(\bbq) \otimes_\bbz \bbr \cong \bbr^r$. Let $P,Q \in E_D(\bbq)$ be non-torsion points. The angle $\theta_{P,Q}$ between $P,Q$ is defined by the formula 
\[\cos \theta_{P,Q} := \frac{\langle P,Q \rangle}{2\lVert P \rVert\lVert Q \rVert} = \frac{\hat{h}(P+Q) - \hat{h}(P) - \hat{h}(Q)}{2\sqrt{\hat{h}(P)\hat{h}(Q)}} = \frac{\hat{h}(P) +\hat{h}(Q) - \hat{h}(P-Q)}{2\sqrt{\hat{h}(P)\hat{h}(Q)}}.\]

Suppose a finite set $X$ of non-torsion points in $E_D(\bbq)$ satisfies 
\begin{equation}\label{repeling_property}
    \cos \theta_{P,Q} \leq \cos \theta_0,\quad P,Q \in X
\end{equation}
for some $\theta_0>0$. Then the image of $X$ under 
\[X \longrightarrow E_D(\bbq) \otimes \bbr,\quad P \longmapsto P \otimes \frac{1}{\sqrt{\hat{h}(P)}}\]
forms a spherical code with respect to $r$ and $\theta_0$. Therefore, we have $\lvert X \rvert \leq A(r,\theta_0)$.

As we will see later, if we choose integral points carefully, then we can make them satisfy the gap principle \eqref{repeling_property}. Then we can use Theorem~\ref{spherical_code_bound1} and Theorem~\ref{spherical_code_bound2} to bound those integral points.

\section{Bound for small points}

In this section, we bound the set 
\[E_D(\bbz)_{small} = \{P \in E_D(\bbz)\:|\:\hat{h}(P) \leq 1.5\log D\}.\]

\begin{proposition}\label{S_1_bound}
    For sufficiently large $D$, 
    \[\lvert E_D(\bbz)_{small} \rvert \ll 4^r.\]
\end{proposition}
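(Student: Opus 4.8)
The plan is to cover $E_D(\bbz)_{small}$ by a bounded number of sets, each of which is a spherical code with respect to a \emph{fixed} angle $\theta_0 > \pi/2$, so that Theorem~\ref{spherical_code_bound2} gives an absolute bound on each piece, while the number of pieces grows at most like $4^r$ via a packing/covering argument in $\bbr^r$. The key observation is the one-sided gap principle extracted from Lemma~\ref{Weil_height_estimate}: if $P,Q \in E_D(\bbz)$ satisfy $MD \leq x(P) < x(Q)$, then combining $h(P+Q) \leq h(P) + 2h(Q) + 2.9$ with the comparison $|\hat h - h| \leq c_2 + \log D$ on points with $x > D$ (Lemma~\ref{Weil_canonical_height}, equation \eqref{Weil_canonical_3}), and likewise $\hat h(P-Q) \le h(P) + 2h(Q) + O(\log D)$ after noting $P-Q$ also has large $x$-coordinate, one gets $\hat h(P \pm Q) \le \hat h(P) + 2\hat h(Q) + O(\log D)$; since all points in $E_D(\bbz)_{small}$ have $\hat h \le 1.5 \log D$ and non-torsion points have $\hat h \ge \tfrac14\log D + c_3$ by Lemma~\ref{canonical_height_lower_bound}, the error term $O(\log D)$ is comparable to $\hat h(P), \hat h(Q)$ themselves, which is exactly the regime where the inner-product computation $\cos\theta_{P,Q} = \frac{\hat h(P)+\hat h(Q)-\hat h(P-Q)}{2\sqrt{\hat h(P)\hat h(Q)}}$ can be pushed above a fixed negative constant, i.e. $\theta_{P,Q} > \pi/2$.

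Concretely, I would first handle the points with $x(P) < MD$ separately: there are only $O(1)$ such integral points since $x$ ranges over a bounded multiple of $D$ while the number of integral points with bounded $x$-coordinate... — more carefully, one partitions by the sign of $y$ and by residue information, or simply absorbs these into the "small $x$" count which is $O(\log D / \log D) = O(1)$ after a height argument; in any case this contributes $\ll 1$. For the remaining points, order them $P_1, P_2, \ldots$ by increasing $x$-coordinate (separately on $y > 0$ and $y < 0$), and observe that for $i < j$ the gap principle forces $\theta_{P_i, P_j}$ to exceed some absolute $\theta_0 > \pi/2$ \emph{unless} $\hat h(P_j)$ is within a bounded multiplicative factor of $\hat h(P_i)$. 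Since $\hat h$ on $E_D(\bbz)_{small}$ lives in the dyadic-type band $[\tfrac14 \log D + c_3,\ 1.5\log D]$, which spans only a bounded multiplicative range, I would instead argue directly: the whole of $E_D(\bbz)_{small}$ (minus the $O(1)$ exceptional points) splits into $O(1)$ "consecutive blocks" within which heights are comparable, and across non-adjacent blocks the angle is $> \pi/2$; then the union of one block from each of boundedly many residue/sign classes gives a spherical code, bounded by Theorem~\ref{spherical_code_bound2}, and there are $\ll 4^r$ ways the points can distribute — actually the cleaner route is a volume-packing bound: points of comparable height $\hat h \asymp \log D$ with pairwise angle bounded below by a fixed $\theta_1 \in (0,\pi)$ number at most $A(r,\theta_1) \ll C^r$, and optimizing the constant $C$ coming from Theorem~\ref{spherical_code_bound1} against the width of the height band is what yields the exponent $4$.

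The main obstacle I anticipate is the bookkeeping that converts the one-sided gap principle of Lemma~\ref{Weil_height_estimate} into a genuine lower bound on the angle $\theta_{P,Q}$ uniform over the small band: the inequality $\hat h(P+Q) \le \hat h(P) + 2\hat h(Q) + O(\log D)$ is only useful when the $O(\log D)$ is small compared with $\sqrt{\hat h(P)\hat h(Q)}$, so one must carefully track the constants $1.5$ and $\tfrac14$ (which are the reasons those particular thresholds were chosen in defining the four subsets) and verify that the resulting $\cos\theta_{P,Q}$ is bounded above by a constant strictly less than $\cos(\pi/2) = 0$ — or, if it only beats a positive constant $\cos\theta_1 < 1$, that the Kabatiansky–Levenshtein bound of Theorem~\ref{spherical_code_bound1} at that $\theta_1$ still comes out below $4^r$ after accounting for the $O(1)$ overcounting from blocks and signs. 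A secondary technical point is ensuring $P \pm Q$ again satisfies the hypothesis $x > D$ (so that the sharper height comparison \eqref{Weil_canonical_3} applies rather than the lossy \eqref{Weil_canonical_2}), which follows from Lemmas~\ref{P+Q_positive_bound} and \ref{P+Q_negative_bound} once $x(P) \ge MD$, so the reduction to $x(P) \ge MD$ at the start is essential.
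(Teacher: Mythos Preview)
Your approach has a genuine gap: the integrality-based gap principle of Lemma~\ref{Weil_height_estimate} is simply too weak in the small regime. The inequality $\hat h(P+Q)\le \hat h(P)+2\hat h(Q)+O(\log D)$ that you extract yields
\[
\cos\theta_{P,Q}\;\le\;\frac{\hat h(Q)+O(\log D)}{2\sqrt{\hat h(P)\hat h(Q)}},
\]
and for points in $E_D(\bbz)_{small}$ every term in numerator and denominator is $\asymp \log D$ (indeed the $O(\log D)$ hidden constant is about $4$, as in the proof of Lemma~\ref{medium_points_bound}), so the right-hand side is a constant much larger than $1$ --- no angle information whatsoever. The whole reason the sets $E_D(\bbz)_{medium-small}$, $E_D(\bbz)_{medium-large}$, $E_D(\bbz)_{large}$ begin at $1.5\log D$, $20\log D$, $2200\log D$ is precisely to make this error term small relative to $\hat h$. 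Your ``main obstacle'' paragraph identifies the issue but then assumes it can be overcome by careful bookkeeping; it cannot, because the inequality goes the wrong way by an order of magnitude. Secondarily, your disposal of the points with $x(P)<MD$ (``there are only $O(1)$ such'') is unjustified and in fact is essentially the statement you are trying to prove.

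The paper's argument is entirely different and does not use integrality at all (as the remark after the proof notes). One partitions $E_D(\bbz)_{small}$ into the $\ll 4^{r}$ cosets of $4E_D(\bbq)$; within one coset, any two distinct points $P_1,P_2$ satisfy $P_1-P_2=4Q$ with $Q$ non-torsion, so Lemma~\ref{canonical_height_lower_bound} gives $\hat h(P_1-P_2)=16\hat h(Q)\ge 4\log D+16c_3\ge 3.5\log D$, while $\hat h(P_1)+\hat h(P_2)\le 3\log D$. This forces $\cos\theta_{P_1,P_2}\le -\tfrac{1}{6}$ directly, and Theorem~\ref{spherical_code_bound2} bounds each coset by $O(1)$. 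The missing idea in your proposal is this coset trick, which manufactures a large lower bound on $\hat h(P_1-P_2)$ from the height lower bound rather than from integrality.
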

\begin{proof}
    Divide $E_D(\bbz)_{small}$ into cosets of $4E_D(\bbq)$. For any point $R \in E_D(\bbq)$, define 
    \[\mathcal{S}(R) := \{P \in E_D(\bbz)_{small}\:|\:P-R \in 4E_D(\bbq)\}.\]
    We will prove 
    \begin{equation}\label{S_R_estimate}
        \lvert \mathcal{S}(R) \rvert \ll 1,\quad R \in E_D(\bbq).
    \end{equation}
    By Lemma~\ref{torsion}, there are at most $4^{r+1}$ cosets of $4E_D(\bbq)$ for sufficiently large $D$, so \eqref{S_R_estimate} proves the proposition.
    
    Let $\{P_1,\ldots,P_n\} \subseteq \mathcal{S}(R)$ be maximal with the property that $P_i-P_j$ is non-torsion whenever $i \neq j$. For $i \neq j$, write $P_i-P_j=4S$ for some non-torsion point $S$. Then by Lemma~\ref{canonical_height_lower_bound}, 
    \[\hat{h}(P_i-P_j) = 16\hat{h}(S) \geq 4\log D + 16c_3 \geq 3.5\log D\]
    for sufficiently large $D$. However, 
    \[\hat{h}(P_i) \leq 1.5\log D,\quad \hat{h}(P_j) \leq 1.5\log D.\]
    Therefore, 
    \begin{equation}\label{S_R_angle}
        \cos \theta_{P_i,P_j} = \frac{\hat{h}(P_i) + \hat{h}(P_j)-\hat{h}(P_i-P_j)}{2\sqrt{\hat{h}(P_i)\hat{h}(P_j)}} \leq -\frac{0.5\log D}{3\log D} = -\frac{1}{6} < 0.
    \end{equation}
    \eqref{S_R_angle} shows that the angle between any two distinct points of $\{P_1,\ldots,P_n\}$ is bounded from below by $\theta_0>\pi/2$. By Theorem~\ref{spherical_code_bound2}, $n$ is bounded by an absolute constant. By maximality, every $P \in \mathcal{S}(R)$ differs from some $P_i$ by a torsion point. By Lemma~\ref{torsion}, $\lvert E_D(\bbq)_{tors} \rvert \leq 4$ for sufficiently large $D$, so we conclude that 
    \[\lvert \mathcal{S}(R) \rvert \leq 4n \ll 1.\]
\end{proof}

\begin{rmk}
    Note that we did not use the fact that $E_D(\bbz)_{small}$ consists of integral points. Therefore, the proof is valid for the set 
    \[\{P \in E_D(\bbq)\:|\:\hat{h}(P) \leq 1.5\log D\}.\]
\end{rmk}

\section{Bound for medium-small points}

In this section, we bound the set 
\[E_D(\bbz)_{medium-small} = \{P \in E_D(\bbz)\:|\:1.5 \log D \leq \hat{h}(P) \leq 20\log D\}.\]

We need two lemmas to prove Proposition~\ref{MS_bound}. The first lemma says that for sufficiently large $D$, integral points with $x(P) \leq MD$ must lie in $E_D(\bbz)_{small}$.

\begin{lemma}\label{small_x(P)}
    Let $P \in E_D(\bbz)$ and $x(P) \leq MD$. Then for sufficiently large $D$, $\hat{h}(P) < 1.5\log D$.
\end{lemma}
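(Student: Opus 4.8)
The plan is to bound the Weil height $h(P) = h(x(P))$ from above using the hypothesis $x(P) \leq MD$, and then convert this into a bound on the canonical height $\hat h(P)$ via the comparison inequality of Lemma~\ref{Weil_canonical_height}. Since $P$ is an integral point, $x(P)$ is an integer, and together with $x(P) \leq MD$ this gives $h(x(P)) = \log|x(P)| \leq \log(MD) = \log M + \log D$, provided $x(P) \geq 1$; the cases where $x(P)$ is bounded (say $|x(P)| \leq M$, or any fixed compact range not growing with $D$) contribute only points of bounded height, which are certainly in the small range once $D$ is large. So the generic case to worry about is $M \leq x(P) \leq MD$, where $h(P) \leq \log M + \log D$.

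The second and final step is to apply \eqref{Weil_canonical_2} of Lemma~\ref{Weil_canonical_height}, which gives $\hat h(P) \leq h(P) + c_2 + \log D$. Combining, $\hat h(P) \leq \log M + \log D + c_2 + \log D = 2\log D + (\log M + c_2)$. This is not quite $\leq 1.5\log D$, so a naive application of \eqref{Weil_canonical_2} is too lossy by a factor of $2$ versus $1.5$. The fix is to be more careful: the factor $\log D$ in \eqref{Weil_canonical_2} comes from $h(x(P)/D) \leq h(x(P)) + \log D$, but when $x(P)$ is a \emph{positive integer} with $x(P) \leq MD$, the rational number $x(P)/D$ has height $\log(\max\{|x(P)|, D\}) / \gcd(\cdot)$-type control, and in fact $h(x(P)/D) \leq \log(MD)$ directly without the extra $\log D$ — or more simply, one works on $E^D$ via $\phi_D$. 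Passing through $\phi_D(P) = (x(P)/D, y(P)/D^2) \in E^D(\bbq)$ and using $h(\phi_D(P)) = h(x(P)/D) \leq \log(MD) \leq \log M + \log D$ together with \eqref{Weil_canonical_1}, we get $\hat h(P) = \hat h(\phi_D(P)) \leq h(\phi_D(P)) + c_2 \leq \log D + (\log M + c_2)$. Since $\log M + c_2$ is a constant depending only on $E$, for $D$ large enough we have $\log D + (\log M + c_2) < 1.5\log D$, i.e. $\hat h(P) < 1.5\log D$.

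The main obstacle, and the only real subtlety, is the bookkeeping around the height of $x(P)/D$ when $x(P)$ may be negative, zero, or small: one should separate off the finitely many $x$-values with $|x(P)| \leq M$ (these give $h(P) \ll_E 1$ and are handled trivially), and for $M < |x(P)| \leq MD$ note $h(x(P)/D) = \log\bigl(|x(P)|/\gcd(x(P),D)\bigr) \leq \log|x(P)| \leq \log(MD)$, so the clean bound $\hat h(P) \leq \log D + O_E(1)$ holds in all cases. I would write the argument using the $E^D$ model so that only \eqref{Weil_canonical_1} (the clean, $D$-independent comparison) is invoked, avoiding \eqref{Weil_canonical_2} entirely; then the conclusion $\hat h(P) < 1.5\log D$ for sufficiently large $D$ is immediate.
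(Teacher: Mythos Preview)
Your approach is exactly the paper's: pass to $E^D$ via $\phi_D$, bound $h(x(P)/D)$, and apply \eqref{Weil_canonical_1}. However, your handling of the negative-$x$ case is incorrect. You write that negative $x(P)$ lie in ``a fixed compact range not growing with $D$'', but this is false: the real locus of $E_D$ extends down to $x$-values of order $-D$, not $-M$. The paper fills this gap with a one-line argument: setting $f(x)=x^3+D^2Ax+D^3B$, one has $f'(x)>0$ on $(-\infty,-MD)$ and $f(-MD)<0$ by the choice of $M$, so no integral point has $x(P)<-MD$. Only after establishing $-MD\leq x(P)\leq MD$ can you legitimately bound $h(x(P)/D)$.

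There is also a slip in your height formula: you write $h(x(P)/D)=\log\bigl(|x(P)|/\gcd(x(P),D)\bigr)$, but the height of a reduced fraction $p/q$ is $\log\max(|p|,|q|)$, not $\log|p|$. Your claimed inequality $h(x(P)/D)\leq\log|x(P)|$ fails whenever $|x(P)|<D$ (take $x(P)=1$: then $h(1/D)=\log D$, not $0$). The correct bound, once $|x(P)|\leq MD$ is known, is simply $h(x(P)/D)=\log\max(|x(P)|/g,\,D/g)\leq\log\max(MD,D)=\log M+\log D$, which is what both you and the paper ultimately use.
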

\begin{proof}
    Let $f(x) = x^3+D^2Ax+D^3B$ for $x \in \bbr$. By the choice of $M$, $f'(x)>0$ on $x \in (-\infty,-MD)$ and $f(-MD)<0$. Therefore, $x(P) \geq -MD$. 

    By using the isomorphism $\phi_D : E_D \rightarrow E^D$, we have  
    \[\hat{h}(P) = \hat{h}(\phi_D(P)) \leq h(\phi_D(P)) + c_2 = h(x(P)/D) + c_2.\]
    Since $-MD \leq x(P) \leq MD$ and $x(P)$ is an integer, 
    \[h(x(P)/D) \leq \log D + \log M.\]
    Therefore, 
    \[\hat{h}(P) \leq \log D + \log M + c_2 < 1.5\log D\]
    for sufficiently large $D$.
\end{proof}

The second lemma is a technical lemma needed in optimization process.

\begin{lemma}\label{f(a,b)_maximum}
    Let $0<\alpha<\beta$ and $0 < c<\alpha$ be fixed constants. Define 
    \[f(a,b) = \frac{a^2+b^2-c^2}{2ab},\quad a,b \in [\alpha, \beta].\]
    Then 
    \[\max_{a,b \in [\alpha, \beta]} f(a,b) = \max \left\{\frac{\alpha^2+\beta^2-c^2}{2\alpha\beta},1-\frac{c^2}{2\beta^2}\right\}.\]
\end{lemma}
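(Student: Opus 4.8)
The plan is to analyze $f(a,b) = \frac{a^2+b^2-c^2}{2ab}$ on the square $[\alpha,\beta]^2$ and show the maximum is attained at a corner. First I would observe that $f$ is symmetric in $a$ and $b$, so it suffices to understand its behavior as a function of one variable with the other fixed, and then combine. Fixing $b$, consider $g(a) = \frac{a^2+b^2-c^2}{2ab} = \frac{a}{2b} + \frac{b^2-c^2}{2ab}$. I would compute $g'(a) = \frac{1}{2b} - \frac{b^2-c^2}{2a^2b} = \frac{a^2-(b^2-c^2)}{2a^2b}$. The sign of $g'(a)$ depends only on whether $a^2$ exceeds $b^2-c^2$; since $b^2 - c^2 < b^2 \le \beta^2$ but could be positive or, if $b$ is small relative to $c$, could be less than $\alpha^2$, the function $g$ is either monotone on $[\alpha,\beta]$ or U-shaped (decreasing then increasing). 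In either case the maximum over $a \in [\alpha,\beta]$ is attained at an endpoint $a \in \{\alpha,\beta\}$.

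Having reduced to the boundary, by symmetry the global maximum of $f$ on $[\alpha,\beta]^2$ is attained at a point where \emph{both} coordinates are endpoints, i.e. at one of the four corners $(\alpha,\alpha)$, $(\alpha,\beta)$, $(\beta,\alpha)$, $(\beta,\beta)$. Using symmetry again, $(\alpha,\beta)$ and $(\beta,\alpha)$ give the same value $\frac{\alpha^2+\beta^2-c^2}{2\alpha\beta}$, while the diagonal corners give $f(\alpha,\alpha) = 1 - \frac{c^2}{2\alpha^2}$ and $f(\beta,\beta) = 1 - \frac{c^2}{2\beta^2}$. So the maximum is
\[
\max\left\{\frac{\alpha^2+\beta^2-c^2}{2\alpha\beta},\ 1-\frac{c^2}{2\alpha^2},\ 1-\frac{c^2}{2\beta^2}\right\}.
\]
The final step is to notice that since $0 < \alpha < \beta$ we have $\frac{c^2}{2\alpha^2} > \frac{c^2}{2\beta^2}$, hence $1 - \frac{c^2}{2\alpha^2} < 1 - \frac{c^2}{2\beta^2}$, so the term $1 - \frac{c^2}{2\alpha^2}$ is always dominated and can be dropped, yielding exactly the claimed formula $\max\left\{\frac{\alpha^2+\beta^2-c^2}{2\alpha\beta}, 1-\frac{c^2}{2\beta^2}\right\}$.

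I do not anticipate a serious obstacle here; this is an elementary optimization. The only point requiring a little care is handling the case distinction in the sign of $b^2 - c^2$ (and whether $b^2 - c^2$ falls inside or outside the interval $[\alpha^2,\beta^2]$) cleanly, so that the "monotone or U-shaped, hence endpoint maximum" conclusion is justified uniformly. An alternative, perhaps slicker, route avoiding the derivative computation: write $f(a,b) = \frac{1}{2}\left(\frac{a}{b} + \frac{b}{a}\right) - \frac{c^2}{2ab}$, set $t = a/b \in [\alpha/\beta, \beta/\alpha]$, and bound $\frac{1}{ab}$ by its maximum $\frac{1}{\alpha^2}$... but this decouples the two terms and is lossy, so the clean argument really is to fix one variable and use convexity/monotonicity in the other. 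I would present the derivative argument as the main line.
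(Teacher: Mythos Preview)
Your proposal is correct and follows essentially the same approach as the paper: compute the partial derivative in one variable with the other fixed, observe that the one-variable restriction is either monotone or U-shaped on $[\alpha,\beta]$ so its maximum occurs at an endpoint, deduce by symmetry that the global maximum lies at a corner, and then eliminate $f(\alpha,\alpha)$ by comparing it to $f(\beta,\beta)$. One tiny remark: since $c<\alpha\le b$, you always have $b^2-c^2>0$, so the only case distinction is whether $\sqrt{b^2-c^2}$ lies below $\alpha$ or in $[\alpha,\beta)$; your ``monotone or U-shaped'' conclusion is correct either way.
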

\begin{proof}
    We have 
    \[\frac{\partial f}{\partial a} = \frac{a^2-b^2+c^2}{2a^2b},\quad \frac{\partial f}{\partial b} = \frac{-a^2+b^2+c^2}{2ab^2}.\]
    Fix $b_0 \in [\alpha,\beta]$. On the line segment $[\alpha,\beta] \times \{b_0\}$, we have 
    \[\frac{\partial f}{\partial a} = \frac{a^2-b_0^2+c^2}{2a^2b_0}.\]
    If $\sqrt{b_0^2-c^2} \leq \alpha$, then $f(a,b_0)$ increases on $[\alpha,\beta]$. If $\alpha < \sqrt{b_0^2-c^2} < \beta$, then $f(a,b_0)$ decreases on $[\alpha,\sqrt{b_0^2-c^2}]$ and increases on $[\sqrt{b_0^2-c^2},\beta]$. In any case, the maximum of $f(a,b_0)$ cannot happen in the interior of $[\alpha,\beta]$. By symmetry, for any $a_0 \in [\alpha,\beta]$, the maximum of $f(a_0,b)$ cannot happen in the interior of $[\alpha,\beta]$. Therefore, the maximum of $f(a,b)$ can happen only at 
    \[(a,b) \in \{(\alpha,\alpha),(\alpha,\beta),(\beta,\alpha),(\beta,\beta)\}.\]

    It is clear that $f(\alpha,\alpha) < f(\beta,\beta)$. Thus 
    \[\max_{\alpha \leq a,b \leq \beta} f(a,b) = \max \left\{\frac{\alpha^2+\beta^2-c^2}{2\alpha\beta},1-\frac{c^2}{2\beta^2}\right\}.\]
\end{proof}

We now estimate $E_D(\bbz)_{medium-small}$.

\begin{proposition}\label{MS_bound}
    For sufficiently large $D$, 
    \[\lvert E_D(\bbz)_{medium-small} \rvert \ll 4^r.\]
\end{proposition}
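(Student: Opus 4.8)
The plan is to follow the same coset‐plus‐spherical‐code strategy used for $E_D(\bbz)_{small}$, but now I need a genuine \emph{gap principle} between integral points of comparable height, since medium-small points are too large for the crude argument of Proposition~\ref{S_1_bound} to produce an obtuse angle automatically. First I would discard torsion and, using Lemma~\ref{small_x(P)}, restrict to integral points with $x(P) > MD$; also, replacing $P$ by $-P$ if necessary, I may assume $y(P) > 0$, losing only a factor of $2$. Then I would partition this set of points into cosets of $mE_D(\bbq)$ for a suitable small integer $m$ (I expect $m=2$, giving $\ll 4^{r+1}$ cosets by Lemma~\ref{torsion}, or possibly I absorb things differently so that the final count comes out $\ll 4^r$), and show each coset $\cS(R)$ has $\ll 1$ elements by exhibiting an angle bound $\cos\theta_{P_1,P_2} \le \cos\theta_0$ with $\theta_0 > \pi/2$, then invoking Theorem~\ref{spherical_code_bound2}.

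The heart of the matter is the angle bound, and here is where Lemma~\ref{Weil_height_estimate} and Lemma~\ref{f(a,b)_maximum} enter. Given two distinct points $P_1, P_2$ in the same coset with (say) $MD \le x(P_1) < x(P_2)$ and $y(P_1)y(P_2) > 0$, write $P_1 - P_2 = mQ$ with $Q$ non-torsion; then $\hat h(P_1-P_2) = m^2 \hat h(Q) \ge \tfrac{m^2}{4}\log D + m^2 c_3$ by Lemma~\ref{canonical_height_lower_bound}, which is a lower bound \emph{linear in $\log D$}. On the other side, I would bound $\hat h(P_1 - P_2)$ from \emph{above} in a way that does \emph{not} grow with $\log D$ relative to $\hat h(P_1), \hat h(P_2)$: using $\hat h(P_1 - P_2) = \hat h(P_1 + (-P_2))$ together with Lemma~\ref{Weil_height_estimate} applied to $P_1$ and $-P_2$ (which have $x$-coordinates $> MD$), and Lemma~\ref{Weil_canonical_height} to pass between $h$ and $\hat h$ (the $x(P) > MD$ version \eqref{Weil_canonical_3} keeps one side clean), I get $\hat h(P_1 - P_2) \le \hat h(P_1) + 2\hat h(P_2) + O(\log D)$ or similar. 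Combining, and parametrising $a = \sqrt{\hat h(P_1)}$, $b = \sqrt{\hat h(P_2)}$ with $a, b$ both in the interval $[\sqrt{1.5\log D}, \sqrt{20\log D}]$, the quantity $\cos\theta_{P_1,P_2}$ is controlled by an expression of the shape $\frac{a^2 + b^2 - c^2}{2ab}$ with $c^2 \asymp \tfrac{m^2}{4}\log D$; Lemma~\ref{f(a,b)_maximum} then reduces the verification that this is $< -\delta$ (or at least $\le \cos\theta_0$ for fixed $\theta_0 > \pi/2$) to checking finitely many boundary values, i.e. a finite numerical inequality in the ratios $1.5, 20, m^2/4$.

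I would handle the case $y(P_1)y(P_2) < 0$ symmetrically: replacing $P_2$ by $-P_2$ swaps the roles of $P_1-P_2$ and $P_1+P_2$ in the formulas, and exactly one of these two combinations will have the same-sign $y$-coordinates after reordering by $x$-coordinate, so the same estimate applies to the other difference; in any case one of $\cos\theta_{P_1,P_2}$, $\cos\theta_{P_1,-P_2}$ is the relevant one and the bound goes through (this is why reducing to $y > 0$ at the start is harmless — it is the pairing inside a fixed coset that matters). A subtlety to watch is that the error terms $c_1, c_2, c_3$ and the $O(\log D)$ slack must be genuinely dominated for \emph{sufficiently large} $D$, so the final inequality should be strict with room to spare; this is exactly the role of the $1.5$ and $20$ thresholds and is why the decomposition of $E_D(\bbz)$ was chosen with those constants.

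The main obstacle I anticipate is \textbf{getting the constants to close}: the spherical-code bound $A(r,\theta_0) \ll 1$ needs $\theta_0$ strictly obtuse, so I need $\cos\theta_{P_1,P_2}$ bounded away from $0$ by a \emph{fixed} negative constant uniformly over the coset and over large $D$, and the competition is between the lower bound $c^2 \gtrsim \tfrac{m^2}{4}\log D$ coming from the coset index and the upper bound on $\hat h(P_1), \hat h(P_2)$ being as large as $20\log D$. If $m=2$ this gives $c^2 \approx \log D$ against $a^2, b^2 \le 20\log D$, and one must check $\frac{\alpha^2 + \beta^2 - c^2}{2\alpha\beta}$ and $1 - \frac{c^2}{2\beta^2}$ are both $< 0$ with $\alpha^2 = 1.5$, $\beta^2 = 20$, $c^2 = 1$ (suitably normalised) — which is \emph{false} as stated, so in fact I expect the argument needs $m$ larger (e.g. passing to cosets of $8E_D(\bbq)$ or iterating, or using a finer sub-partition of the height range), trading a worse coset count against a usable angle; reconciling this with the target $\ll 4^r$ rather than $\ll 8^{2r}$ is the real technical work, and is presumably why the medium-small range is treated separately from medium-large.
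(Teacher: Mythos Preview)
Your proposal has the right skeleton (angle bound plus spherical codes) but the wrong mechanism for the gap principle, and you have correctly diagnosed at the end that your approach does not close. The paper's proof is genuinely different in two ways.

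First, the lower bound on $\hat h(P-Q)$ does \emph{not} come from a coset index. It comes from integrality via Lemma~\ref{P+Q_negative_bound}: if $P,Q$ are integral with $x(P),x(Q)\geq MD$ and $y(P),y(Q)>0$, then $P-Q=P+(-Q)$ has $y(P)y(-Q)<0$, so that lemma gives $x(P-Q)\geq\min\{x(P),x(Q)\}$ directly. Hence $h(P-Q)\geq\min\{h(P),h(Q)\}$, and after the $\log D$ loss from Lemma~\ref{Weil_canonical_height} one gets $\hat h(P-Q)\geq (n-1.6)\log D$ when both points have $\hat h$ in $[(n-0.5)\log D,(n+0.5)\log D]$. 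No cosets are used at all in this range; your attempt to import the small-points coset trick is precisely what makes the constants fail.

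Second, the resulting angle is \emph{not} obtuse. With $\hat h(P),\hat h(Q)\in[(n-0.5)\log D,(n+0.5)\log D]$ and $\hat h(P-Q)\geq(n-1.6)\log D$, Lemma~\ref{f(a,b)_maximum} (used exactly as you anticipated) gives $\cos\theta_{P,Q}$ bounded above by a quantity depending on $n$; for $n=2$ this is about $0.93$, decreasing toward roughly $0.55$ for large $n$. So Theorem~\ref{spherical_code_bound2} is useless here, and the paper instead partitions the medium-small range into the nineteen bands $\mathcal{MS}_n$, $2\leq n\leq 20$, and applies the Kabatiansky--Levenshtein bound (Theorem~\ref{spherical_code_bound1}) separately in each band; the worst band ($n=2$) gives $\ll(3.6)^r$, and summing nineteen such bounds still yields $\ll 4^r$. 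Your reference to Lemma~\ref{Weil_height_estimate} is misplaced: that lemma bounds $h(P+Q)$ from \emph{above} and is the engine for the medium-large range, not this one.
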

\begin{proof}
    For $2 \leq n \leq 20$, define 
    \[\mathcal{MS}_n := \{P \in E_D(\bbz)\:|\: (n-0.5)\log D \leq \hat{h}(P) \leq (n+0.5)\log D\}\]
    and 
    \[\mathcal{MS}_{n}^+ := \{P \in \mathcal{MS}_{n}\:|\:y(P)>0\}.\]
    Then 
    \[\lvert E_D(\bbz)_{medium-small} \rvert \leq \sum_{n=2}^{20} \lvert \mathcal{MS}_n \rvert = 2\sum_{n=2}^{20} \lvert \mathcal{MS}_n^+ \rvert.\]
    Therefore, it suffices to prove 
    \begin{equation}\label{MS_n_bound}
        \lvert \mathcal{MS}_n^+ \rvert \ll 4^r,\quad 2 \leq n \leq 20.
    \end{equation}
    
    Fix $2 \leq n \leq 20$. Let $P,Q \in \mathcal{MS}_n^+$ be distinct points. First, by Lemma~\ref{small_x(P)}, $x(P),x(Q) \geq MD$ for sufficiently large $D$. Then by Lemma~\ref{P+Q_negative_bound}, 
    \[x(P-Q) \geq \min\{x(P),x(Q)\},\]
    so 
    \[h(P-Q) \geq \min\{h(P),h(Q)\}.\]
    By Lemma~\ref{Weil_canonical_height}, 
    \[h(P) \geq \hat{h}(P)-c_2 \geq (n-0.55)\log D,\quad h(Q) \geq \hat{h}(Q)-c_2 \geq (n-0.55)\log D,\]
    thus 
    \[h(P-Q) \geq (n-0.55)\log D.\]
    for sufficiently large $D$. Hence by Lemma~\ref{Weil_canonical_height}, 
    \[\hat{h}(P-Q) \geq h(P-Q)+c_1-\log D \geq (n-1.55)\log D + c_1 \geq (n-1.6)\log D\]
    for sufficiently large $D$. Then 
    \[\cos \theta_{P,Q} = \frac{\hat{h}(P) + \hat{h}(Q)-\hat{h}(P-Q)}{2\sqrt{\hat{h}(P)\hat{h}(Q)}} \leq \frac{\hat{h}(P) + \hat{h}(Q)-(n-1.6)\log D}{2\sqrt{\hat{h}(P)\hat{h}(Q)}}.\]
    By Lemma~\ref{f(a,b)_maximum}, 
    \begin{equation}\label{MS_n_angle}
        \cos\theta_{P,Q} \leq \max\left\{\frac{n+1.6}{2\sqrt{n^2-0.25}},1-\frac{n-1.6}{2(n+0.5)}\right\}.
    \end{equation}
    \eqref{MS_n_angle} shows that the angle between any two distinct points of $\mathcal{MS}_n^+$ is bounded from below. Therefore, Theorem~\ref{spherical_code_bound1} with some computation implies \eqref{MS_n_bound}. The precise values can be found in the appendix.
\end{proof}

\section{Bound for medium-large points}

In this section, we bound the set 
\[E_D(\bbz)_{medium-large}  = \{P \in E_D(\bbz)\:|\:20\log D \leq \hat{h}(P) \leq 2200\log D\}.\]

We first prove that if $\hat{h}(P)$ and $\hat{h}(Q)$ are close together, then the angle $\theta_{P,Q}$ between $P$ and $Q$ is bounded from below. This is the gap principle for $E_D(\bbz)_{medium-large}$.

\begin{lemma}\label{medium_points_bound}
    Let $P,Q \in E_D(\bbz)_{medium-large}$ satisfy $x(P) \neq x(Q)$ and 
    \[\max \left\{\frac{\hat{h}(Q)}{\hat{h}(P)},\frac{\hat{h}(P)}{\hat{h}(Q)}\right\} \leq 1.1.\] 
    Then for sufficiently large $D$, 
    \[\cos \theta_{P,Q} \leq 0.63.\]
\end{lemma}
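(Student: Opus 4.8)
The plan is to reduce the bound on $\cos\theta_{P,Q}$ to a lower bound on $\hat h(P-Q)$ (or $\hat h(P+Q)$), and then obtain that lower bound from the Weil-height gap principle in Lemma~\ref{Weil_height_estimate} together with the height comparisons in Lemma~\ref{Weil_canonical_height} and Lemma~\ref{small_x(P)}. First I would use Lemma~\ref{small_x(P)} to note that, since $\hat h(P),\hat h(Q)\geq 20\log D$, for sufficiently large $D$ both points satisfy $x(P),x(Q)>MD$; in particular they are far from torsion and $x(P),x(Q)>D$, so the sharper comparison \eqref{Weil_canonical_3} applies. Replacing $P$ or $Q$ by its negative if necessary (which changes neither $\hat h$ nor the set of $x$-coordinates, and only possibly swaps $P-Q$ with $P+Q$), I may assume $y(P)y(Q)>0$ and, say, $x(P)\leq x(Q)$. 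Write $S$ for whichever of $P\pm Q$ has $y(P)y(Q)>0$ in the relevant sum; the point is to bound $\hat h$ of the point whose $x$-coordinate we can control from below by $0.19\,x(P)$ via Lemma~\ref{P+Q_positive_bound}.

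The core estimate is then: by Lemma~\ref{P+Q_positive_bound}, $x(P+Q)\geq 0.19\,x(P)$, hence $h(P+Q)\geq h(P)-\log(1/0.19)\geq h(P)-1.67$; combining with \eqref{Weil_canonical_3} twice gives $\hat h(P+Q)\geq \hat h(P)+c_1-\log D-1.67\geq \hat h(P)-1.1\log D$ for large $D$ (using $\hat h(P)\geq 20\log D$ so that the additive constant $c_1-1.67$ is absorbed into, say, a $-0.1\log D$ slack). Symmetrically one gets the same bound with the roles interchanged, so in all
\[
\hat h(P\mp Q)\;\geq\;\max\{\hat h(P),\hat h(Q)\}-1.1\log D\;\geq\;\hat h(P)-0.055\,\hat h(P),
\]
since $\hat h(P)\geq 20\log D$ makes $1.1\log D\leq 0.055\,\hat h(P)$. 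Plugging into
\[
\cos\theta_{P,Q}=\frac{\hat h(P)+\hat h(Q)-\hat h(P\mp Q)}{2\sqrt{\hat h(P)\hat h(Q)}}
\]
and writing $t=\hat h(Q)/\hat h(P)\in[1/1.1,\,1.1]$, the numerator is at most $(1.055+t)\hat h(P)-0$... more precisely $\leq(\,t+0.055\,)\hat h(P)$ if we took $S=P+Q$ on the side where $\hat h(P)$ is the larger, so $\cos\theta_{P,Q}\leq (t+0.055)/(2\sqrt t)$, whose maximum over $t\in[1/1.1,1.1]$ is attained at an endpoint. A short computation with $t=1.1$ (or $t=1/1.1$) shows this is $\leq 0.63$ for large $D$; one must be a little careful to apply the bound with $P,Q$ labelled so that the point being estimated sits above the larger of the two heights, or else to take $\max$ over both labellings, exactly as in Lemma~\ref{f(a,b)_maximum}.

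The main obstacle is bookkeeping the sign cases and making sure the additive constants ($c_1$, $\log 6$-type terms, the $\log(1/0.19)$) are genuinely swallowed by the $20\log D$ lower bound for $D$ large; none of the individual inequalities is deep, but one has to be disciplined about which of $P\pm Q$ has $y(P)y(Q)>0$ — after possibly negating $P$ or $Q$ we may always arrange $y(P)y(Q)>0$, so Lemma~\ref{P+Q_positive_bound} is the only one of the two addition-law lemmas we actually need, and the quantity $\hat h(P-Q)$ in the definition of $\theta_{P,Q}$ becomes $\hat h(P+Q)$ after that negation, which is harmless since $\cos\theta_{P,Q}$ is unchanged by simultaneously negating nothing/both. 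The numerical optimization at the end is routine and, as in the earlier propositions, can be relegated to the appendix; I would just record that $\sup_{1/1.1\le t\le 1.1}(t+0.055)/(2\sqrt t)<0.63$.
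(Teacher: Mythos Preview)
There is a genuine gap in the sign bookkeeping. Negating one of $P,Q$ does \emph{not} leave $\cos\theta_{P,Q}$ unchanged: $\langle P,-Q\rangle=-\langle P,Q\rangle$, so $\cos\theta_{P,-Q}=-\cos\theta_{P,Q}$. Your remark that ``$\cos\theta_{P,Q}$ is unchanged by simultaneously negating nothing/both'' is true but irrelevant, since you are negating exactly one point. Concretely, suppose the original points already satisfy $y(P)y(Q)>0$ (so no relabeling is needed). Lemma~\ref{P+Q_positive_bound} then gives $x(P+Q)\geq 0.19\,x(P)$, hence a \emph{lower} bound on $\hat h(P+Q)$; plugging this into $\cos\theta_{P,Q}=\dfrac{\hat h(P+Q)-\hat h(P)-\hat h(Q)}{2\sqrt{\hat h(P)\hat h(Q)}}$ yields a \emph{lower} bound on $\cos\theta_{P,Q}$, which is the wrong direction. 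The same issue explains the ambiguous ``$P\mp Q$'' in your display: you have bounded $\hat h(P+Q)$ but need $\hat h(P-Q)$, and there is no way to swap them without changing the angle. (Note also that the upper bound $x(P+Q)\leq 2x(P)$ in Lemma~\ref{P+Q_positive_bound} does not help, since $P+Q$ need not be integral and its denominator can be large.)

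The paper's proof avoids all of this by going the other way: Lemma~\ref{Weil_height_estimate} gives the \emph{upper} bound $h(P+Q)\leq h(P)+2h(Q)+2.9$, valid regardless of the sign of $y(P)y(Q)$; Lemma~\ref{Weil_canonical_height} then converts this to $\hat h(P+Q)\leq \hat h(P)+2\hat h(Q)+4\log D$, and one reads off $\cos\theta_{P,Q}\leq \tfrac12\sqrt{\hat h(Q)/\hat h(P)}+0.1\leq \tfrac12\sqrt{1.1}+0.1<0.63$. If you prefer your lower-bound route, it can be repaired by treating the two sign cases separately and always bounding $x(P-Q)$: when $y(P)y(Q)>0$ apply Lemma~\ref{P+Q_negative_bound} to $P$ and $-Q$ to get $x(P-Q)\geq\min\{x(P),x(Q)\}$, and when $y(P)y(Q)<0$ apply Lemma~\ref{P+Q_positive_bound} to $P$ and $-Q$ to get $x(P-Q)\geq 0.19\min\{x(P),x(Q)\}$. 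Either way, you need both addition-law lemmas, not just Lemma~\ref{P+Q_positive_bound}.
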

\begin{proof}
    Without loss of generality, assume $x(P)<x(Q)$. By Lemma~\ref{small_x(P)}, $x(P),x(Q) \geq MD$. By Lemma~\ref{Weil_height_estimate}, 
    \[h(P+Q) \leq h(P) + 2h(Q) + 2.9.\]
    By Lemma~\ref{Weil_canonical_height}, 
    \[\hat{h}(P+Q) \leq \hat{h}(P) + 2\hat{h}(Q) + 4\log D\]
    for sufficiently large $D$. It follows that  
    \[\cos \theta_{P,Q} = \frac{\hat{h}(P+Q) - \hat{h}(P) - \hat{h}(Q)}{2\sqrt{\hat{h}(P)\hat{h}(Q)}} \leq \frac{1}{2}\sqrt{\frac{\hat{h}(Q)}{\hat{h}(P)}} + 0.1 \leq 0.63.\]
\end{proof}

\begin{proposition}\label{ML_bound}
    For sufficiently large $D$, 
    \[\lvert E_D(\bbz)_{medium-large} \rvert \ll 4^r.\]
\end{proposition}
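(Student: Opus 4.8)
The plan is to cover $E_D(\bbz)_{medium-large}$ by a bounded number of ``dyadic-type'' slices on which a gap principle gives a spherical-code bound, exactly in the spirit of the proof of Proposition~\ref{MS_bound}. Concretely, for a parameter $\rho = 1.1$ I would partition the height range $[20\log D, 2200\log D]$ into intervals $I_k = [20\rho^k\log D, 20\rho^{k+1}\log D]$; since $2200/20 = 110$ and $\log 110 / \log 1.1 \approx 49.1$, there are at most $50$ such slices, a number independent of $E$ and $D$. For each slice let $\mathcal{ML}_k$ be the integral points with canonical height in $I_k$, and let $\mathcal{ML}_k^+$ be those with $y(P) > 0$, so that $|E_D(\bbz)_{medium-large}| \leq 2\sum_k |\mathcal{ML}_k^+|$ after discarding the at most $4$ torsion points via Lemma~\ref{torsion}. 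It then suffices to show $|\mathcal{ML}_k^+| \ll 4^r$ for each $k$.

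Within a single slice $\mathcal{ML}_k^+$ any two distinct points $P, Q$ satisfy $\max\{\hat{h}(P)/\hat{h}(Q), \hat{h}(Q)/\hat{h}(P)\} \leq \rho = 1.1$ by construction. I would like to apply Lemma~\ref{medium_points_bound} directly, but that lemma requires $x(P) \neq x(Q)$; the points $P$ and $-P$ have the same $x$-coordinate but opposite $y$-coordinate, and restricting to $y(P) > 0$ removes exactly that coincidence (two integral points with $y > 0$ and equal $x$-coordinate on a Weierstrass curve are equal). Lemma~\ref{small_x(P)} guarantees $x(P), x(Q) \geq MD$ for the points actually in the medium-large range once $D$ is large, so the hypotheses of Lemma~\ref{medium_points_bound} are met and we get $\cos\theta_{P,Q} \leq 0.63$ for all distinct $P, Q \in \mathcal{ML}_k^+$. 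Hence the image of $\mathcal{ML}_k^+$ under $P \mapsto P \otimes \hat{h}(P)^{-1/2}$ is a spherical code for $r$ and the angle $\theta_0 = \arccos 0.63$, and since $\theta_0 < \pi/2$ we invoke Theorem~\ref{spherical_code_bound1}.

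The remaining point is numerical: I must check that the Kabatiansky--Levenshtein bound at $\theta_0 = \arccos 0.63$ is below $4^r$, i.e.\ that
\[
\exp\!\left(\frac{1+\sin\theta_0}{2\sin\theta_0}\log\frac{1+\sin\theta_0}{2\sin\theta_0} - \frac{1-\sin\theta_0}{2\sin\theta_0}\log\frac{1-\sin\theta_0}{2\sin\theta_0} + 0.001\right) < 4.
\]
With $\cos\theta_0 = 0.63$ one has $\sin\theta_0 = \sqrt{1 - 0.63^2} = \sqrt{0.6031} \approx 0.7766$, so $(1+\sin\theta_0)/(2\sin\theta_0) \approx 1.1438$ and $(1-\sin\theta_0)/(2\sin\theta_0) \approx 0.1438$; the exponent evaluates to roughly $1.1438\log 1.1438 + 0.1438\log 6.954 + 0.001 \approx 0.1538 + 0.2789 + 0.001 \approx 0.434$, and $e^{0.434} \approx 1.54 < 4$. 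So each slice contributes $\ll 1.6^r \ll 4^r$, and multiplying by the $\leq 100$ (from the factor $2$ and $\leq 50$ slices) absolute constant finishes the proof. I expect no genuine obstacle here — the argument is a direct dyadic decomposition plus the gap principle already proved — the only care needed is bookkeeping the ``sufficiently large $D$'' conditions (one per slice, but finitely many slices so a single $D_0$ works) and confirming the spherical-code constant stays under $4$, which the computation above shows comfortably. The precise constants would be deferred to the appendix as in Proposition~\ref{MS_bound}.
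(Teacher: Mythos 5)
Your proposal is correct and follows essentially the same route as the paper: the same dyadic decomposition into $50$ slices of ratio $1.1$, restriction to $y(P)>0$ to obtain $x(P)\neq x(Q)$, application of Lemma~\ref{medium_points_bound} to get $\cos\theta_{P,Q}\leq 0.63$, and then Theorem~\ref{spherical_code_bound1}, yielding the same per-slice bound $\ll(1.55)^r$. The only (harmless) redundancy is the explicit discarding of torsion points, which is unnecessary here since torsion points have $\hat{h}=0$ and so cannot lie in the medium-large range.
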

\begin{proof}
    Note that $110 \leq (1.1)^{50}$. For $1 \leq n \leq 50$, define 
    \[\mathcal{ML}_n := \{P \in E_D(\bbz)\:|\:20\cdot(1.1)^{n-1}\log D \leq \hat{h}(P) \leq 20\cdot(1.1)^n\log D\}\]
    and 
    \[\mathcal{ML}_n^+ := \{P \in \mathcal{ML}_n\:|\:y(P)>0\}.\]
    Then 
    \[\lvert E_D(\bbz)_{medium-large} \rvert \leq \sum_{n=1}^{50} \lvert \mathcal{ML}_n \rvert = 2\sum_{n=1}^{50} \lvert \mathcal{ML}_n^+ \rvert.\]
    Therefore, it suffices to prove 
    \begin{equation}\label{ML_n_bound}
        \lvert \mathcal{ML}_n^+ \rvert \ll 4^r,\quad 1 \leq n \leq 50.
    \end{equation}

    Fix $1 \leq n \leq 50$. Let $P,Q \in \mathcal{ML}_n^+$ be distinct points. Then $x(P) \neq x(Q)$. Note that 
    \[\max \left\{\frac{\hat{h}(Q)}{\hat{h}(P)},\frac{\hat{h}(P)}{\hat{h}(Q)}\right\} \leq \frac{20\cdot(1.1)^n\log D}{20\cdot(1.1)^{n-1}\log D} = 1.1.\] 
    Thus by Lemma~\ref{medium_points_bound}, 
    \begin{equation}\label{ML_n_angle}
        \cos \theta_{P,Q} \leq 0.63.
    \end{equation}
    \eqref{ML_n_angle} shows that the angle between any two distinct points of $\mathcal{ML}_n^+$ is bounded from below. Therefore, Theorem~\ref{spherical_code_bound1} with some computation implies \eqref{ML_n_bound}. Precisely, we have 
    \[\lvert \mathcal{ML}_n^+ \rvert \ll (1.55)^r,\quad 1 \leq n \leq 50.\]
\end{proof}

\section{Diophantine approximations}

In this section, we prove Lemma~\ref{Diophantine_approximation}. Roughly speaking, Lemma~\ref{Diophantine_approximation} says that large integral points give rational approximations to an algebraic number with exponent $>2$. By the celebrated Roth's theorem \cite{Rot55}, there are only finitely many such rational approximations, so we are able to bound large integral points. This kind of argument was originally given by Alpoge \cite{Alp14}, and subsequently refined by Chan \cite{Cha22}. The form of Diophantine approximation applied in this work is analogous to \cite{Cha22}[Lemma~6.1]. 

We begin with a simple lemma bounding the derivative of a polynomial.

\begin{lemma}\label{Mahler_bound}
    Let 
    \[f(x) = a_0x^m + a_1x^{m-1} + \cdots + a_m = a_0 \prod_{h=1}^m (x-\alpha_h),\quad m\geq 2\]
    be a polynomial with real or complex coefficients. Let $D(f)$ be the discriminant of $f$ and $L(f) = \lvert a_0 \rvert + \lvert a_1 \rvert + \cdots + \lvert a_m \rvert$. Then 
    \[\lvert f'(\alpha_r) \rvert \geq (m-1)^{-(m-1)/2}\lvert D(f) \rvert^{1/2}L(f)^{-(m-2)}.\]
\end{lemma}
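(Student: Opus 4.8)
The plan is to express $f'(\alpha_r)$ in terms of the roots, relate the product of pairwise differences to the discriminant, and then use a Mahler-type inequality to bound the product over the remaining roots by a power of $L(f)$. First I would write
\[
f'(\alpha_r) = a_0 \prod_{h \neq r}(\alpha_r - \alpha_h),
\]
which is immediate from the factorization $f(x) = a_0\prod_h(x-\alpha_h)$ by the Leibniz rule (all terms but one vanish at $x=\alpha_r$). Taking absolute values,
\[
\lvert f'(\alpha_r)\rvert = \lvert a_0\rvert \prod_{h\neq r}\lvert \alpha_r-\alpha_h\rvert.
\]

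Next I would recall that the discriminant satisfies
\[
\lvert D(f)\rvert = \lvert a_0\rvert^{2m-2}\prod_{i<j}\lvert \alpha_i-\alpha_j\rvert^2,
\]
so that
\[
\lvert a_0\rvert^{m-1}\prod_{i<j}\lvert\alpha_i-\alpha_j\rvert = \lvert D(f)\rvert^{1/2}.
\]
The product $\prod_{i<j}\lvert\alpha_i-\alpha_j\rvert$ decomposes as $\prod_{h\neq r}\lvert\alpha_r-\alpha_h\rvert$ times $\prod_{i<j,\,i,j\neq r}\lvert\alpha_i-\alpha_j\rvert$. The first factor is what appears in $\lvert f'(\alpha_r)\rvert/\lvert a_0\rvert$; the second is the (square root of the) discriminant-type product over the $m-1$ roots other than $\alpha_r$. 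So the strategy is to bound that second product from above, which will give a lower bound for the first.

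For the upper bound on $\prod_{i<j,\,i,j\neq r}\lvert\alpha_i-\alpha_j\rvert$, I would use Hadamard's inequality applied to the Vandermonde matrix on the $m-1$ roots $\{\alpha_h : h\neq r\}$: the absolute value of the Vandermonde determinant is $\prod_{i<j,\,i,j\neq r}\lvert\alpha_i-\alpha_j\rvert$, and Hadamard bounds it by the product of the Euclidean norms of its rows (or columns). Each column is $(1,\alpha,\alpha^2,\ldots,\alpha^{m-2})$ for some root $\alpha$, with norm $(\sum_{k=0}^{m-2}\lvert\alpha\rvert^{2k})^{1/2}$. Here I would invoke the standard Mahler estimate relating $\prod_h \max(1,\lvert\alpha_h\rvert)$ (the Mahler measure, up to the leading coefficient) to $L(f)$, namely $\lvert a_0\rvert\prod_h\max(1,\lvert\alpha_h\rvert) \le \binom{m}{\lfloor m/2\rfloor}^{1/2}\cdots$ — or more simply the crude bound $\lvert a_0\rvert\prod_h\max(1,\lvert\alpha_h\rvert)\le L(f)$ is false in general, so the right tool is $M(f) \le \lVert f\rVert_2 \le L(f)$ where $M(f)=\lvert a_0\rvert\prod\max(1,\lvert\alpha_h\rvert)$. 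Combining the column-norm bounds with this and counting the exponent of $(m-1)$ that comes out of bounding each $\sum_{k=0}^{m-2}\lvert\alpha\rvert^{2k}$ by $(m-1)\max(1,\lvert\alpha\rvert)^{2(m-2)}$ yields
\[
\prod_{i<j,\,i,j\neq r}\lvert\alpha_i-\alpha_j\rvert \le (m-1)^{(m-1)/2}\,\lvert a_0\rvert^{-(m-1)}L(f)^{m-2},
\]
after dividing through by the appropriate power of $\lvert a_0\rvert$. Substituting back into the discriminant identity and solving for $\lvert a_0\rvert\prod_{h\neq r}\lvert\alpha_r-\alpha_h\rvert = \lvert f'(\alpha_r)\rvert$ gives exactly
\[
\lvert f'(\alpha_r)\rvert \ge (m-1)^{-(m-1)/2}\lvert D(f)\rvert^{1/2}L(f)^{-(m-2)}.
\]

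The main obstacle is getting the exponent bookkeeping exactly right: tracking the powers of $\lvert a_0\rvert$ through the discriminant identity and the Vandermonde/Hadamard step, and confirming that bounding each Vandermonde column norm by $(m-1)^{1/2}\max(1,\lvert\alpha\rvert)^{m-2}$ together with $M(f)\le L(f)$ produces precisely $(m-1)^{(m-1)/2}$ and $L(f)^{m-2}$ rather than off-by-one versions. I expect this is a known inequality (a form of it appears in work of Mahler and in Bugeaud's book on approximation), so an alternative is simply to cite it; but the Vandermonde–Hadamard–Mahler-measure route above is self-contained and is the approach I would write out.
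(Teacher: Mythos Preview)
Your argument is correct, and in substance it coincides with the paper's: the paper simply cites Mahler's 1964 note for this inequality, and what you have written is essentially Mahler's own proof there (Hadamard on the $(m-1)\times(m-1)$ Vandermonde of the roots $\alpha_h$, $h\ne r$, combined with the Mahler-measure bound $M(f)\le L(f)$). So the two ``approaches'' agree; you have unpacked the citation.

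One small correction to your write-up: the inequality you briefly flag as ``false in general'', namely $\lvert a_0\rvert\prod_h\max(1,\lvert\alpha_h\rvert)\le L(f)$, is in fact true --- it is exactly $M(f)\le\lVert f\rVert_2\le L(f)$ (Landau's inequality). So there is no obstacle at that step. Your exponent bookkeeping is right: bounding each Vandermonde column norm by $(m-1)^{1/2}\max(1,\lvert\alpha_h\rvert)^{m-2}$ and using $\prod_{h\ne r}\max(1,\lvert\alpha_h\rvert)\le M(f)/\lvert a_0\rvert\le L(f)/\lvert a_0\rvert$ gives
\[
\prod_{\substack{i<j\\ i,j\ne r}}\lvert\alpha_i-\alpha_j\rvert \le (m-1)^{(m-1)/2}\bigl(L(f)/\lvert a_0\rvert\bigr)^{m-2},
\]
and substituting into $\lvert D(f)\rvert^{1/2}=\lvert a_0\rvert^{m-2}\lvert f'(\alpha_r)\rvert\cdot\prod_{i<j,\,i,j\ne r}\lvert\alpha_i-\alpha_j\rvert$ yields the stated bound with the $\lvert a_0\rvert$ powers cancelling exactly.
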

\begin{proof}
    This is in the last line of p.262 of \cite{Mah64}.
\end{proof}

\begin{lemma}\label{Diophantine_approximation}
    Let $P \in E_D(\bbz)$ and let $P=3Q+R$ for $Q,R \in E_D(\bbq)$. Assume that $x(R) \geq MD$, $h(P) > 1000h(R)$, and $h(P) > 2000\log D$. Take $S \in \frac{1}{3}R$ such that $\lvert x(Q)-x(S) \rvert$ is minimum. Then for sufficiently large $D$, 
    \begin{equation}\label{diopahntine_equation_1}
        h(Q) > 5.7h(S)
    \end{equation}
    and 
    \begin{equation}\label{diopahntine_equation_2}
        \frac{\log \lvert x(Q)-x(S) \rvert}{h(Q)} < -2.7.
    \end{equation}
\end{lemma}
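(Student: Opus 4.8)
The plan is to prove the two assertions separately; \eqref{diopahntine_equation_1} is a routine height comparison, while \eqref{diopahntine_equation_2} is the substantive part. The mechanism behind \eqref{diopahntine_equation_2} is this: since $P$ is a very large integral point, $y(P)=\pm x(P)^{3/2}\bigl(1+O(D^2/x(P)^2)\bigr)$, so $x(P-R)$ is forced to be exceedingly close to $x(R)$. Now $x(Q)$ is a root of $\phi_3(T)-x(P-R)\psi_3(T)^2$ and $x(S)$ is a root of the nearby polynomial $g_R(T):=\phi_3(T)-x(R)\psi_3(T)^2$, so a root--perturbation estimate, together with a lower bound for $|g_R'(x(S))|$ coming from Lemma~\ref{Mahler_bound}, will pin $x(Q)$ to within an exponentially small distance of the algebraic number $x(S)$; tracking the exponents yields the claimed bound.

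I would first dispose of the elementary points. Exactly as in the proof of Lemma~\ref{small_x(P)}, the hypotheses force $x(P)\ge MD$ (so $h(P)=\log x(P)$), and $x(R)\ge MD$ prevents $R$ from being $2$-torsion, since the $x$-coordinates of the $2$-torsion points lie in $(-MD,MD)$. For the heights, $9\hat h(Q)=\hat h(3Q)=\hat h(P-R)$, and Cauchy--Schwarz gives $\hat h(P-R)\in\bigl[(\sqrt{\hat h(P)}-\sqrt{\hat h(R)})^2,\,(\sqrt{\hat h(P)}+\sqrt{\hat h(R)})^2\bigr]$; feeding in Lemma~\ref{Weil_canonical_height} together with $h(R)<h(P)/1000$ and $h(P)>2000\log D$ gives $\hat h(R)/\hat h(P)<0.0021$, hence $0.1\,h(P)<h(Q)<0.13\,h(P)$ once $D$ is large. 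On the other hand, applying the $\hat h$--$h$ comparison over $\overline{\bbq}$, $h(S)\le\hat h(S)+O(\log D)=\tfrac19\hat h(R)+O(\log D)\le h(P)/9000+O(\log D)$. Comparing the two gives $h(Q)>5.77\,h(S)$ with room to spare, which is \eqref{diopahntine_equation_1}.

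For \eqref{diopahntine_equation_2} I would argue in four steps. (i) From $x(P-R)=\bigl((y(P)+y(R))/(x(P)-x(R))\bigr)^2-x(P)-x(R)$, expanding in powers of $x(R)/x(P)$ (and watching the sign of $y(P)y(R)$), one gets $|x(P-R)-x(R)|\le C\,x(R)^{3/2}/\sqrt{x(P)}$, i.e.\ $\log|x(P-R)-x(R)|\le \tfrac32\log x(R)-\tfrac12 h(P)+O(1)$; in particular $x(P-R)$ is comparable to $x(R)$, and Lemma~\ref{3P_bound} applied to $Q$ then gives $|x(Q)|\le C\,x(R)$, whence $|\psi_3(x(Q))|\le C\,x(R)^4$ (using $x(R)\ge MD$ to absorb the lower-degree terms). (ii) Since $\phi_3(x(Q))-x(P-R)\psi_3(x(Q))^2=0$, we have $g_R(x(Q))=(x(P-R)-x(R))\psi_3(x(Q))^2$, so $\log|g_R(x(Q))|\le 9.5\,h(R)-\tfrac12 h(P)+O(1)$. (iii) Write $x(R)=a_R/b_R^2$ in lowest terms and set $\tilde g_R:=b_R^2\phi_3-a_R\psi_3^2\in\bbz[T]$, a polynomial of degree $9$ whose discriminant is a nonzero integer — nonzero precisely because $R$ is not $2$-torsion, so the nine points of $\tfrac13 R$ have distinct $x$-coordinates. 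Since $L(\tilde g_R)\le C\,e^{h(R)}D^9$, Lemma~\ref{Mahler_bound} (with $m=9$, using $|D(\tilde g_R)|\ge 1$) gives $|g_R'(x(S))|=|\tilde g_R'(x(S))|/b_R^2\ge b_R^{-2}\,8^{-4}\,L(\tilde g_R)^{-7}$, hence $-\log|g_R'(x(S))|\le 8\,h(R)+63\log D+O(1)$. (iv) A standard estimate for how a root moves under perturbation of the polynomial then yields $|x(Q)-x(S)|\le 2\,|g_R(x(Q))|/|g_R'(x(S))|$; this is legitimate because the hypotheses $h(P)>1000\,h(R)$ and $h(P)>2000\log D$ make $|g_R(x(Q))|/|g_R'(x(S))|$ far smaller than the separation of $x(S)$ from the remaining roots of $\tilde g_R$ (itself bounded below by a negative power of $L(\tilde g_R)$), so $x(Q)$ lies in a neighbourhood of $x(S)$ on which $g_R$ is well approximated by its linearisation. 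Combining (i)--(iv),
\[\log|x(Q)-x(S)|\le 17.5\,h(R)+63\log D-\tfrac12 h(P)+O(1),\]
and then $h(R)<h(P)/1000$, $\log D<h(P)/2000$ and $0.1\,h(P)<h(Q)<0.13\,h(P)$ force $\log|x(Q)-x(S)|/h(Q)<-3.4<-2.75$ for $D$ (equivalently $h(P)$) sufficiently large.

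I expect step (iv) to be the main obstacle: one must show rigorously that the root $x(Q)$ of $\phi_3-x(P-R)\psi_3^2$ is genuinely trapped near $x(S)$ and not near some other root of $g_R$, and that on the relevant interval the linear approximation to $g_R$ is accurate to within a factor of $2$; this needs a quantitative lower bound for the root separation of $\tilde g_R$ and a short bootstrap, and it is precisely here that the large constants $1000$ and $2000$ in the hypotheses are consumed. A secondary but unavoidable nuisance is the bookkeeping of the $O(h(R))$ and $O(\log D)$ error terms throughout and the handling of borderline configurations (e.g.\ $x(S)$ small, or $R$ close to but not equal to a $2$-torsion point); as elsewhere in the paper these are absorbed into ``sufficiently large $D$'', with the precise constants relegated to the appendix.
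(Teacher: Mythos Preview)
Your proposal is correct and follows essentially the same route as the paper: the height comparison for \eqref{diopahntine_equation_1} is identical in spirit (the paper uses the cruder AM--GM bound $6\hat h(Q)-2\hat h(R)\le\hat h(P)\le 12\hat h(Q)+4\hat h(R)$ where you use the sharper Cauchy--Schwarz interval), and for \eqref{diopahntine_equation_2} both arguments bound $|g_R(x(Q))|$ from above and $|g_R'(x(S))|$ from below via Lemma~\ref{Mahler_bound}. Your steps (i)+(ii) are just a repackaging of the paper's identity $x(P)\bigl(\prod_{3T=R}(x(Q)-x(T))\bigr)^2=\psi_3(Q)^4\cdot(\text{numerator of the addition formula})$: since $g_R(x(Q))=\psi_3(x(Q))^2\,(x(3Q)-x(R))$, bounding $|x(P-R)-x(R)|$ and $|\psi_3(x(Q))|$ separately is equivalent to bounding their product directly.

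The one place where you make life harder than necessary is step (iv). You cast it as a root--perturbation/linearisation argument and correctly identify the bootstrap (showing $x(Q)$ lands near $x(S)$ rather than another root, with quantitative root separation) as the main obstacle. The paper dissolves this obstacle with a one--line trick: since $S$ minimises $|x(Q)-x(S)|$, the triangle inequality gives $|x(S)-x(T)|\le 2|x(Q)-x(T)|$ for every $T\ne S$, hence
\[
\prod_{T\ne S}|x(Q)-x(T)|\;\ge\;2^{-8}\prod_{T\ne S}|x(S)-x(T)|\;=\;2^{-8}\,|g_R'(x(S))|,
\]
and therefore
\[
|x(Q)-x(S)|\;=\;\frac{|g_R(x(Q))|}{\prod_{T\ne S}|x(Q)-x(T)|}\;\le\;\frac{2^{8}\,|g_R(x(Q))|}{|g_R'(x(S))|}.
\]
No linearisation, no root--separation lower bound, no bootstrap; the minimality hypothesis on $S$ does all the work. (Your constant $2$ should accordingly be $2^{8}$, which is harmless for the final inequality.) With this replacement your argument and the paper's coincide.
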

\begin{proof}
    Put $\lambda = 1/1000$ and $\delta = 1/2000$ so that $h(R) < \lambda h(P)$ and $\log D < \delta h(P)$.

    We first compare $h(P)$ and $h(Q)$. By the triangle inequality,
    \[3\sqrt{\hat{h}(Q)} - \sqrt{\hat{h}(R)} \leq \sqrt{\hat{h}(P)} \leq 3\sqrt{\hat{h}(Q)} + \sqrt{\hat{h}(R)}.\]
    By squaring, 
    \[9\hat{h}(Q)+\hat{h}(R) - 6\sqrt{\hat{h}(Q)}\sqrt{\hat{h}(R)} \leq \hat{h}(P) \leq 9\hat{h}(Q)+\hat{h}(R) + 6\sqrt{\hat{h}(Q)}\sqrt{\hat{h}(R)}.\]
    Using 
    \[2\sqrt{\hat{h}(Q)}\sqrt{\hat{h}(R)} \leq \hat{h}(Q) + \hat{h}(R),\]
    we obtain 
    \begin{equation}\label{triangle_inequality}
        6\hat{h}(Q)-2\hat{h}(R) \leq \hat{h}(P) \leq 12\hat{h}(Q)+4\hat{h}(R).
    \end{equation}
    By Lemma~\ref{Weil_canonical_height}, 
    \[h(P) - \log D + c_1 \leq \hat{h}(P) \leq h(P) + \log D + c_2.\]
    Since $\log D \leq \delta h(P)$, we have 
    \begin{equation}\label{P_estimate}
        (1-\delta)h(P)+c_1 \leq \hat{h}(P) \leq (1+\delta)h(P) + c_2.
    \end{equation}
    The same estimate shows 
    \begin{equation}\label{Q_estimate}
        h(Q) -\delta h(P) +c_1 \leq \hat{h}(Q) \leq h(Q) + \delta h(P)+c_2.
    \end{equation}
    For $R$, using $h(R) \leq \lambda h(P)$ shows 
    \begin{equation}\label{R-estimate}
        \hat{h}(R) \leq (\lambda+\delta)h(P) + c_2.
    \end{equation}
    Putting \eqref{P_estimate}, \eqref{Q_estimate}, and \eqref{R-estimate} into \eqref{triangle_inequality} and then clearing, we have 
    \[6h(Q) \leq (1+2\lambda+9\delta)h(P) + c_5\] 
    and 
    \[(1-4\lambda-17\delta)h(P)+c_6 \leq 12h(Q).\]
    Dividing by $h(P)$ gives 
    \begin{equation}\label{h(Q)/h(P)_estimate}
        \frac{1-4\lambda-17\delta}{12} + \frac{c_6}{h(P)} \leq \frac{h(Q)}{h(P)} \leq \frac{1+2\lambda+9\delta}{6} + \frac{c_5}{h(P)}.
    \end{equation}
    
    By Lemma~\ref{Weil_canonical_height}, 
    \[h(S) \leq \hat{h}(S) + \log D - c_1.\]
    Using $9\hat{h}(S) = \hat{h}(R)$, \eqref{R-estimate}, and $\log D \leq \delta h(P)$, we have 
    \[h(S) \leq \frac{10\lambda+\delta}{9}h(P) + c_7 \leq \frac{10\lambda+\delta+0.1}{9}h(P)\]
    for sufficiently large $D$. Thus 
    \begin{equation}\label{PS-estimate}
        h(P) \geq \frac{9}{10\lambda+\delta+0.1}h(S).
    \end{equation}
    The left inequality of \eqref{h(Q)/h(P)_estimate} gives 
    \begin{equation}\label{h(Q)_lower_bound}
        h(Q) \geq \frac{0.9-4\lambda-17\delta}{12}h(P).
    \end{equation}
    for sufficiently large $D$. Combining \eqref{PS-estimate} and \eqref{h(Q)_lower_bound} gives 
    \[h(Q) \geq \frac{3(0.9-4\lambda-17\delta)}{4(10\lambda+\delta+0.1)}h(S) > 5.7h(S).\]

    We now begin the proof of \eqref{diopahntine_equation_2}. Note that 
    \[x(R) = x(3T) = \frac{\phi_3(T)}{\psi_3(T)^2},\quad 3T = R\]
    where $\psi_n(x)$ is an $n$-th division polynomial and $\phi_n = x\psi_n(x)^2-\psi_{n+1}(s)\psi_{n-1}(x)$. Define 
    \[f_R(X) := \prod_{3T=R} (X-x(T)) = \phi_3(X) - x(R)\psi_3(X)^2\]
    as a polynomial in $X$. Put $X=x(Q)$ to get 
    \[\prod_{3T=R} (x(Q)-x(T)) = \phi_3(Q) - x(R)\psi_3(Q)^2 = \psi_3(Q)^2(x(3Q)-x(R)).\]
    Note that 
    \begin{align*}
        x(P) &= x(3Q+R) \\
        &= \frac{(x(3Q)x(R)+D^2A)(x(3Q)+x(R))+2D^3B - 2y(3Q)y(R)}{(x(3Q)-x(R))^2}.
    \end{align*}
    Thus 
    \begin{align}\label{division_polynomial_equation}
    \begin{split}
        &x(P)\left(\prod_{3T=R} (x(Q)-x(T))\right)^2  \\
        &= \psi_3(Q)^4((x(3Q)x(R)+D^2A)(x(3Q)+x(R))+2D^3B - 2y(3Q)y(R)).
    \end{split}
    \end{align}
    We will estimate the right hand side. 
    
    Note that $x(R) \leq x(P)^\lambda \leq x(P)$. In particular, 
    \begin{equation}\label{x(P)/x(R)_estimate}
        \frac{x(P)}{x(R)} \geq x(P)^{1-\lambda} \gg 1.
    \end{equation}
    By Lemma~\ref{3P_bound}, Lemma~\ref{P+Q_positive_bound}, Lemma~\ref{P+Q_negative_bound}, \eqref{x(P)/x(R)_estimate}, and the relation $P-R=3Q$, we have 
    \begin{equation}\label{x(Q)_bound}
        x(Q) \ll x(R).
    \end{equation}
    By using \eqref{x(Q)_bound}, Lemma~\ref{P+Q_positive_bound}, and Lemma~\ref{P+Q_negative_bound}, we have 
    \[\psi_3(Q)^4 \ll x(R)^{16}\]
    and
    \[(x(3Q)x(R)+D^2A)(x(3Q)+x(R))+2D^3B - 2y(3Q)y(R) \ll x(R)^3\]
    and thus the right hand side of \eqref{division_polynomial_equation} is bounded by 
    \[\ll x(R)^{19} \leq x(P)^{19\lambda}.\]
    Dividing by $x(P)$ gives 
    \[\left(\prod_{3T=R} (x(Q)-x(T))\right)^2 \ll x(P)^{-1+19\lambda}.\]
    By taking logs and dividing by $h(P)$ gives 
    \begin{equation}\label{log_1}
        \frac{\log \prod_{3T=R} \lvert x(Q)-x(T) \rvert}{h(P)} \leq -\frac{1}{2} + \frac{19\lambda}{2} + \frac{c_8}{h(P)}.
    \end{equation}

    Now let $\alpha = x(S)$. Then 
    \[f_R'(\alpha) = \prod_{\substack{3T=R \\ T \neq S}} (\alpha - x(T)).\]
    By Lemma~\ref{Mahler_bound}, 
    \[\lvert f_R'(\alpha) \rvert \geq 8^{-4}\lvert D(f_R) \rvert^{1/2}L(f_R)^{-7}.\]
    Let $x(R) = r/s$ where $\gcd(r,s)=1$. Then $h(R) = \log r$ and $r \geq MDs$. From $sf_R(X) \in \bbz(X)$, $\lvert D(sf_R) \rvert \geq 1$, so $\lvert D(f_R) \rvert \geq s^{-16}$. Also we can easily obtain 
    \[L(f_R) \ll (MD)^8x(R).\]
    Thus 
    \[\lvert f_R'(\alpha) \rvert \gg r^{-8}(MD)^{-55}.\]
    By the triangle inequality, 
    \[\lvert x(S) - x(T) \rvert \leq \lvert x(Q) - x(S) \rvert+\lvert x(Q) - x(T) \rvert  \leq 2\lvert x(Q) - x(T) \rvert\]
    for all $T \in \frac{1}{3}R$ such that $T \neq S$. Thus 
    \[\prod_{\substack{3T=R \\ T \neq S}} \lvert \alpha - x(T) \rvert \leq 2^8\prod_{\substack{3T=R \\ T \neq S}} \lvert x(Q) - x(T) \rvert.\]
    Hence, 
    \[\prod_{\substack{3T=R \\ T \neq S}} \lvert x(Q) - x(T) \rvert \gg \prod_{\substack{3T=R \\ T \neq S}} \lvert \alpha - x(T) \rvert = \lvert f_R'(\alpha) \rvert \gg r^{-8}(MD)^{-55}.\]
    By taking logs, we have 
    \[\log \prod_{\substack{3T=R \\ T \neq S}} \lvert x(Q) - x(T) \rvert \geq - 8h(R) - 55\log D + c_9.\]
    Dividing by $h(P)$, we have 
    \begin{equation}\label{log_2}
        \frac{\log \prod_{\substack{3T=R \\ T \neq S}} \lvert x(Q) - x(T) \rvert }{h(P)} \geq - 8\lambda - 55\delta + \frac{c_9}{h(P)}.
    \end{equation}

    Combining \eqref{log_1} and \eqref{log_2} gives 
    \[\frac{\log \lvert x(Q)-x(S) \rvert}{h(P)} \leq -\frac{1}{2} + \frac{27}{2}\lambda+55\delta + \frac{c_{10}}{h(P)}.\]
    Using $h(P)>2000\log D$ gives 
    \begin{equation}\label{log_3}
        \frac{\log \lvert x(Q)-x(S) \rvert}{h(P)} \leq -\frac{1}{2} + \frac{27}{2}\lambda+55\delta + \frac{c_{11}}{\log D}.
    \end{equation}

    From \eqref{h(Q)/h(P)_estimate}, we have 
    \[\frac{h(Q)}{h(P)} \leq \frac{1+2\lambda+9\delta}{6} + \frac{c_{12}}{\log D}.\]
    Using the identity 
    \[\frac{1}{a+b} = \frac{1}{a} - \frac{b}{a(a+b)} \geq \frac{1}{a} - \frac{b}{2a^2},\quad 0 < b \leq a\]
    we have 
    \begin{equation}\label{h(P)/h(Q)_lower_bound}
        \frac{h(P)}{h(Q)} \geq \frac{6}{1+2\lambda+9\delta} + \frac{c_{13}}{\log D}.
    \end{equation}

    Combining \eqref{log_3} and \eqref{h(P)/h(Q)_lower_bound} gives 
    \begin{align*}
        \frac{\log \lvert x(Q)-x(S) \rvert}{h(Q)} &\leq \left(-\frac{1}{2} + \frac{27}{2}\lambda+55\delta + \frac{c_{12}}{\log D}\right)\left(\frac{6}{1+2\lambda+9\delta} + \frac{c_{13}}{\log D}\right) \\
        &\leq -\frac{3(1-27\lambda-110\delta)}{1+2\lambda+9\delta} + \frac{c_{14}}{\log D} \\
        &< -2.7
    \end{align*}
    for sufficiently large $D$.
\end{proof}

\section{Quantitative Roth's theorem}

As mentioned above, Lemma~\ref{Diophantine_approximation} says that if $P=3Q+R$ is a very large integral point, then $x(Q)$ is a rational approximation to $x(S)$ with exponent $>2.7$. By Roth's theorem, it is known that the number of such $x(Q)$ is finite. In order to bound the number, we need a quantitative version of Roth's theorem. 

The first quantitative Roth's theorem was given by Davenport and Roth \cite{DR55} in 1955. Their bound was subsequently improved by Mignotte \cite{Mig74}, Bombieri and van der Poorten \cite{BP88}, Silverman \cite{Sil87}, Gross \cite{Gro90}, and Evertse \cite{Eve97}. We state a quantitative Roth's theorem given by Evertse \cite{Eve10}.

\begin{theorem}\label{Quantitative_Roth_theorem}
    Fix an embedding $\overline{\bbq} \hookrightarrow \bbc$, so that we have an infinite place $\lvert \:\cdot \:\rvert$ on $\overline{\bbq}$. Let $\alpha \in \overline{\bbq}$ be an algebraic number of degree $d$ and $\epsilon>0$. Then the number of $\beta \in \bbq$ satisfying the simultaneous inequalities 
    \[h(\beta) \geq \max\{h(\alpha),\log 2\},\quad \frac{\log \lvert \alpha-\beta \rvert}{h(\beta)} < -(2+\epsilon)\]
    is at most 
    \[2^{25}\epsilon^{-3}\log (2d)\log(\epsilon^{-1}\log (2d)).\]
\end{theorem}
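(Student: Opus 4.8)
\emph{Proof strategy for Theorem~\ref{Quantitative_Roth_theorem}.}
The plan is to prove this by the Thue--Siegel--Roth method in its quantitative (counting) form, in the tradition of Davenport--Roth, Bombieri--van der Poorten, and ultimately Evertse \cite{Eve10}; there is no shortcut that avoids the auxiliary polynomial. The first step is to dispose of the solutions of bounded height: writing $\beta = p/q$ in lowest terms, the normalization $h(\beta) \geq \max\{h(\alpha),\log 2\}$ together with the approximation exponent $2+\epsilon>2$ forces distinct solutions apart, so only boundedly many solutions can have height below any fixed threshold. The real task is therefore to bound the number of solutions of large height.

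For that I would first set up a \textbf{gap principle}. If $\beta_1 = p_1/q_1$ and $\beta_2 = p_2/q_2$ are distinct solutions in lowest terms with $h(\beta_1) \leq h(\beta_2)$, then $|\beta_1-\beta_2| \geq 1/(q_1 q_2)$, while both lie within $|\alpha - \beta_i| < e^{-(2+\epsilon)h(\beta_i)}$ of $\alpha$; combining these by the triangle inequality gives $h(\beta_2) \geq (1+\epsilon)h(\beta_1) - \log 2$. Hence, after ordering the solutions by height, their heights grow at least geometrically with ratio essentially $1+\epsilon$. This already bounds the number of solutions in a window $[H,H^K]$ by $O(\epsilon^{-1}\log K)$, and — more importantly — it lets me extract from any long run of solutions a subsequence $\beta_1,\dots,\beta_m$ whose consecutive height ratios are as large as I please.

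The heart of the argument is then the classical derivation of a contradiction once $m$ exceeds a threshold depending only on $\epsilon$ and $d$. By Siegel's lemma one constructs a nonzero polynomial $P(X_1,\dots,X_m) \in \bbz[X_1,\dots,X_m]$, of degree at most $r_i$ in $X_i$ and with explicitly bounded coefficients, whose index at $(\alpha,\dots,\alpha)$, relative to the weights $(1/r_1,\dots,1/r_m)$, is at least about $m/2$. The approximation hypothesis, propagated through the Taylor expansion of $P$ and combined with the fact that a \emph{nonzero} value $\partial^{\mathbf{i}}P(\beta_1,\dots,\beta_m)$ is a rational with controlled denominator and hence not too small, shows that the index of $P$ at $(\beta_1,\dots,\beta_m)$ is also large. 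Finally Roth's lemma (equivalently Dyson's lemma, or the sharp form of Esnault--Viehweg) bounds that index from above by something small once the degrees $r_i$ are chosen compatibly with the height ratios; for $m$ large enough this contradicts the lower bound. Tracking every constant through Siegel's lemma, the non-vanishing estimate, and Roth's lemma, and then summing the contributions of the $O(\epsilon^{-1}\log(\,\cdot\,))$ height windows below the point where the contradiction bites, yields the explicit bound $2^{25}\epsilon^{-3}\log(2d)\log(\epsilon^{-1}\log(2d))$.

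The step I expect to be the main obstacle is the quantitative non-vanishing estimate: extracting an honest, fully explicit lower bound for the index of $P$ at $(\beta_1,\dots,\beta_m)$. This is where the degrees $r_i$ must be delicately balanced against the rapidly growing heights $h(\beta_i)$, where the coefficient bounds from Siegel's lemma re-enter, and where one must ensure that derivatives of $P$ of moderate order are genuinely small and not merely ``expected'' to be small; this bookkeeping is exactly what produces the $\epsilon^{-3}$ (rather than $\epsilon^{-2}$) and the $\log d$ factors. The cleanest route in practice is to invoke Evertse's theorem \cite{Eve10}, which carries out precisely this optimization.
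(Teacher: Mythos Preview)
Your outline is a faithful high-level sketch of how quantitative Roth-type bounds are actually obtained (gap principle, auxiliary polynomial via Siegel's lemma, index lower bound from the approximation hypothesis, index upper bound from Roth's lemma, then counting height windows), and you correctly identify that the cleanest way to reach the stated constant is to cite Evertse~\cite{Eve10}. The paper, however, does not reprove or even sketch any of this: its entire proof is the one-line citation ``This is \cite{Eve10}[Theorem~1.1].'' So your proposal is not wrong, but it is doing far more work than the paper does; for the purposes of this manuscript the theorem is a black box imported from \cite{Eve10}, and your final sentence already lands on exactly that.

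One minor caution if you ever do want to flesh this out: the specific exponent $2^{25}\epsilon^{-3}\log(2d)\log(\epsilon^{-1}\log(2d))$ and the particular height normalization $h(\beta)\ge\max\{h(\alpha),\log 2\}$ are artifacts of Evertse's precise setup, so a from-scratch argument along the lines you describe would have to be tuned rather carefully to reproduce those constants rather than merely something of the same shape.
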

\begin{proof}
    This is \cite[Theorem~1.1]{Eve10}.
\end{proof}

\section{Bound for large points}

In this section, we bound the set 
\[E_D(\bbz)_{large} = \{P \in E_D(\bbz)\:|\:\hat{h}(P) \geq 2200\log D\}.\]

We first divide $E_D(\bbz)_{large}$ into cosets of $3E_D(\bbq)$. For any point $R \in E_D(\bbq)$, define 
\[\mathcal{L}(R) := \{P \in E_D(\bbz)_{large}\:|\:P-R \in 3E_D(\bbq)\}.\]
For each coset $R+3E_D(\bbq)$, if $\mathcal{L}(R)$ is non-empty, then we choose $R$ to be the point with minimum canonical height among points with $x$-coordinate $\geq MD$. Then define 
\[\mathcal{L}(R)^* := \{P \in \mathcal{L}(R)\:|\:\hat{h}(P) \leq 1050\hat{h}(R)\}\]
and 
\[\mathcal{L}(R)^{**} := \{P \in \mathcal{L}(R)\:|\:\hat{h}(P) > 1050\hat{h}(R)\}.\]

We first estimate $\mathcal{L}(R)^*$. The method is same as that of $E_D(\bbz)_{medium-large}$, but we set the bound sharper.

\begin{lemma}\label{large_points_bound}
    Let $P,Q \in E_D(\bbz)_{large}$ satisfy $x(P) \neq x(Q)$ and 
    \[\max \left\{\frac{\hat{h}(Q)}{\hat{h}(P)},\frac{\hat{h}(P)}{\hat{h}(Q)}\right\} \leq 1.01.\] 
    Then for sufficiently large $D$, 
    \[\cos \theta_{P,Q} \leq 0.504.\]
\end{lemma}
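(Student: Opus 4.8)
The plan is to mimic the proof of Lemma~\ref{medium_points_bound}, but to exploit the fact that large points have $x$-coordinate at least $MD$ so that the $4\log D$ error term from Lemma~\ref{Weil_canonical_height} can be replaced by the sharper $\log D$ of \eqref{Weil_canonical_3}, and that $\hat h(P),\hat h(Q)\ge 2200\log D$ makes all $O(\log D)$ errors negligible relative to the canonical heights. Concretely, assume without loss of generality $x(P)<x(Q)$; by Lemma~\ref{small_x(P)} we have $x(P),x(Q)\ge MD$, so Lemma~\ref{Weil_height_estimate} gives $h(P+Q)\le h(P)+2h(Q)+2.9$. Since $x(P+Q)\ge 0.19 x(P)\ge x(P)\ge MD>D$ by Lemma~\ref{P+Q_positive_bound} (handling $y(P)y(Q)>0$; the case $y(P)y(Q)<0$ is covered by Lemma~\ref{P+Q_negative_bound}), we may apply \eqref{Weil_canonical_3} to $P+Q$ as well, yielding
\[
\hat h(P+Q)\le h(P+Q)+c_2\le h(P)+2h(Q)+c_2+2.9\le \hat h(P)+2\hat h(Q)+(\log D - 2c_1)+c_2+2.9,
\]
where I have converted the Weil heights of $P$ and $Q$ back to canonical heights via \eqref{Weil_canonical_3}. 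For sufficiently large $D$ this is at most $\hat h(P)+2\hat h(Q)+2\log D$.

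Next I would feed this into the angle formula. We get
\[
\cos\theta_{P,Q}=\frac{\hat h(P+Q)-\hat h(P)-\hat h(Q)}{2\sqrt{\hat h(P)\hat h(Q)}}\le \frac{\hat h(Q)+2\log D}{2\sqrt{\hat h(P)\hat h(Q)}}=\frac12\sqrt{\frac{\hat h(Q)}{\hat h(P)}}+\frac{\log D}{\sqrt{\hat h(P)\hat h(Q)}}.
\]
The hypothesis $\hat h(Q)/\hat h(P)\le 1.01$ bounds the first term by $\tfrac12\sqrt{1.01}<0.5025$, and since $\hat h(P),\hat h(Q)\ge 2200\log D$ the second term is at most $\log D/(2200\log D)<0.0005$. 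Summing gives $\cos\theta_{P,Q}\le 0.503<0.504$, with room to spare; the constant $2200$ is clearly chosen precisely so that this second error term is tiny.

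I do not expect any serious obstacle: this is essentially a sharpened rerun of Lemma~\ref{medium_points_bound}. The only point requiring a little care is making sure $P+Q$ genuinely has $x$-coordinate exceeding $D$ so that the sharp height comparison \eqref{Weil_canonical_3} is legitimate — this is where Lemma~\ref{P+Q_positive_bound} and Lemma~\ref{P+Q_negative_bound} are needed, and one should note that the case $x(P)=x(Q)$ is excluded by hypothesis (and $x(P)<x(Q)$ versus $x(Q)<x(P)$ is handled by symmetry of $\cos\theta_{P,Q}$). A second minor point is bookkeeping the constants $c_1,c_2$: all of them are absorbed into "for sufficiently large $D$" since they are fixed while $\log D\to\infty$. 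Everything else is routine arithmetic with the stated numerical thresholds.
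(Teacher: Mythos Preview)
Your approach is essentially the paper's, but the paper is more direct: it simply reuses the inequality $\hat h(P+Q)\le\hat h(P)+2\hat h(Q)+4\log D$ already obtained in the proof of Lemma~\ref{medium_points_bound} and observes that $\tfrac12\sqrt{1.01}+\dfrac{4\log D}{2\cdot 2200\log D}<0.503+0.001=0.504$. Your detour through \eqref{Weil_canonical_3} (and the attendant verification that $x(P+Q)>D$ via Lemmas~\ref{P+Q_positive_bound}--\ref{P+Q_negative_bound}) is therefore unnecessary.

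That detour also contains two slips worth noting. First, the chain ``$0.19\,x(P)\ge x(P)$'' is false as written. Second, your conversion of $h(P)+2h(Q)$ back to canonical heights should produce a $3\log D$ term, not $\log D$: the \emph{lower} bound in \eqref{Weil_canonical_3} is identical to that in \eqref{Weil_canonical_2}, so one still only has $h(P)\le\hat h(P)-c_1+\log D$ and $h(Q)\le\hat h(Q)-c_1+\log D$; only the upper bound on $\hat h(P+Q)-h(P+Q)$ is sharpened. Hence the displayed bound ``$\le\hat h(P)+2\hat h(Q)+2\log D$'' does not follow from your computation. Neither slip damages the final conclusion, since even the paper's weaker $4\log D$ already suffices numerically.
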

\begin{proof}
    Without loss of generality, assume $x(P)<x(Q)$. The same argument as in Lemma~\ref{medium_points_bound} gives 
    \[\hat{h}(P+Q) - \hat{h}(P) - \hat{h}(Q) \leq \hat{h}(Q) + 4\log D\]
    for sufficiently large $D$. It follows that 
    \[\cos \theta_{P,Q} = \frac{\hat{h}(P+Q) - \hat{h}(P) - \hat{h}(Q)}{2\sqrt{\hat{h}(P)\hat{h}(Q)}} \leq \frac{1}{2}\sqrt{\frac{\hat{h}(Q)}{\hat{h}(P)}} + 0.001 \leq 0.504.\]
\end{proof}

\begin{proposition}\label{LR*_bound}
    For sufficiently large $D$, 
    \[\lvert \mathcal{L}(R)^* \rvert \ll (1.33)^r.\]
\end{proposition}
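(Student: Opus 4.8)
The plan is to bound $\lvert \mathcal{L}(R)^* \rvert$ by partitioning it into thin dyadic-type shells on which the canonical heights of any two points are within a factor of $1.01$, applying Lemma~\ref{large_points_bound} to get an angle bound on each shell, and then summing the spherical-code estimates over the (boundedly many) shells. Concretely, since $P \in \mathcal{L}(R)^*$ satisfies $\hat h(R) \le \hat h(P) \le 1050\,\hat h(R)$ — the lower bound holding because $R$ was chosen to have minimal canonical height in its coset among points with $x$-coordinate $\ge MD$, and one checks $P$ itself qualifies after possibly replacing $P$ by $-P$ (which doesn't change $x(P)$ or $\hat h(P)$) — the heights of all points in $\mathcal{L}(R)^*$ lie in a single interval $[\hat h(R),\,1050\,\hat h(R)]$. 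Since $1050 \le (1.01)^{N}$ for $N$ around $700$, I would define for $1 \le n \le N$ the shells
\[
\mathcal{L}_n := \{P \in \mathcal{L}(R)^* \mid (1.01)^{n-1}\hat h(R) \le \hat h(P) \le (1.01)^{n}\hat h(R)\}
\]
and $\mathcal{L}_n^+ := \{P \in \mathcal{L}_n \mid y(P) > 0\}$, so that $\lvert \mathcal{L}(R)^* \rvert \le \sum_{n=1}^{N} \lvert \mathcal{L}_n \rvert = 2\sum_{n=1}^N \lvert \mathcal{L}_n^+\rvert$ once torsion is excluded (legitimate by Lemma~\ref{torsion} for large $D$, absorbing the $O(1)$ torsion points into the implied constant).

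Next I would fix $n$ and take distinct $P, Q \in \mathcal{L}_n^+$. Since both have positive $y$-coordinate and both have $x$-coordinate $\ge MD$ by Lemma~\ref{small_x(P)} (as points of $E_D(\bbz)_{large}$ they are certainly not in $E_D(\bbz)_{small}$ for large $D$), distinctness forces $x(P) \ne x(Q)$: if $x(P) = x(Q)$ then $P = \pm Q$, and $y(P), y(Q) > 0$ rules out $P = -Q$. The ratio of their canonical heights is at most $(1.01)^n/(1.01)^{n-1} = 1.01$, so Lemma~\ref{large_points_bound} applies and gives $\cos\theta_{P,Q} \le 0.504$. Thus the image of $\mathcal{L}_n^+$ in $E_D(\bbq)\otimes\bbr \cong \bbr^r$ under $P \mapsto P \otimes \hat h(P)^{-1/2}$ is a spherical code with respect to $r$ and the fixed angle $\theta_0 = \arccos(0.504) > \pi/2$... wait — $0.504 > 0$, so $\theta_0 < \pi/2$, and I would invoke Theorem~\ref{spherical_code_bound1} rather than Theorem~\ref{spherical_code_bound2}. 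The bound is $\lvert \mathcal{L}_n^+ \rvert \le A(r,\theta_0) \ll \kappa^r$ where $\kappa = \exp\!\big(\tfrac{1+\sin\theta_0}{2\sin\theta_0}\log\tfrac{1+\sin\theta_0}{2\sin\theta_0} - \tfrac{1-\sin\theta_0}{2\sin\theta_0}\log\tfrac{1-\sin\theta_0}{2\sin\theta_0} + 0.001\big)$ with $\cos\theta_0 = 0.504$, i.e. $\sin\theta_0 = \sqrt{1 - 0.504^2}$; a direct numerical evaluation should give $\kappa < 1.33$. Summing over the constantly-many $n$ (at most $N = O(1)$ shells) and the factor $2$ yields $\lvert \mathcal{L}(R)^* \rvert \le 2N \cdot \kappa^r \ll (1.33)^r$, since $2N$ is absorbed into $\ll$.

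The main obstacle I anticipate is purely the numerical verification that the Kabatiansky–Levenshtein exponent at $\cos\theta_0 = 0.504$ genuinely falls below $\log 1.33 \approx 0.2852$; this is the crux of why the threshold $1050$ and the $1.01$-factor were chosen exactly as they are, and it is the step whose precise constants the paper defers to its appendix. Everything else — excluding torsion, reducing to the positive-$y$ half, checking $x(P) \ne x(Q)$, and verifying that the shell count $N$ depends only on the ratio $1050$ and hence is an absolute constant — is routine bookkeeping of the kind already carried out in Propositions~\ref{MS_bound} and~\ref{ML_bound}. One subtlety worth a sentence in the write-up: to apply Lemma~\ref{large_points_bound} one needs $P, Q \in E_D(\bbz)_{large}$, which holds by definition of $\mathcal{L}(R) \subseteq E_D(\bbz)_{large}$, and one needs the shells to be defined via $\hat h$ (not $h$), which is exactly how the angle $\theta_{P,Q}$ is defined, so no height comparison is lost in the shelling itself.
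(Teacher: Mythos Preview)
Your proposal is correct and follows essentially the same route as the paper: partition $\mathcal{L}(R)^*$ into roughly $700$ shells with height ratio $1.01$, pass to the positive-$y$ half, apply Lemma~\ref{large_points_bound} to obtain $\cos\theta_{P,Q}\le 0.504$ on each shell, and invoke the Kabatiansky--Levenshtein bound (Theorem~\ref{spherical_code_bound1}) to get $\ll(1.33)^r$ per shell. Your added justifications (why $\hat h(P)\ge\hat h(R)$, why $x(P)\ne x(Q)$, handling torsion) are sound and in fact more explicit than the paper's own write-up; the remark about replacing $P$ by $-P$ is unnecessary, since $x(P)\ge MD$ already follows directly from Lemma~\ref{small_x(P)}.
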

\begin{proof}
    Note that $1050 \leq (1.01)^{700}$. For $1 \leq n \leq 700$, define 
    \[\mathcal{L}(R)^*_n := \{P \in \mathcal{L}(R)\:|\:(1.01)^{n-1} \hat{h}(R)\leq \hat{h}(P) \leq (1.01)^n \hat{h}(R)\}\]
    and 
    \[(\mathcal{L}(R)^*_n)^+ := \{P \in \mathcal{L}(R)_{1,n}\:|\:y(P)>0\}.\]
    Then 
    \[\lvert \mathcal{L}(R)^* \rvert \leq \sum_{n=1}^{700} \lvert \mathcal{L}(R)^*_n \rvert = 2\sum_{n=1}^{700} \lvert (\mathcal{L}(R)^*_n)^+ \rvert.\]
    Therefore, it suffices to prove 
    \begin{equation}\label{LR_1n_bound}
        \lvert (\mathcal{L}(R)^*_n)^+ \rvert \ll (1.33)^r,\quad 1 \leq n \leq 700.
    \end{equation}

    Fix $1 \leq n \leq 700$. Let $P,Q \in (\mathcal{L}(R)^*_n)^+$ be distinct points. Then $x(P) \neq x(Q)$. Note that 
    \[\max \left\{\frac{\hat{h}(Q)}{\hat{h}(P)},\frac{\hat{h}(P)}{\hat{h}(Q)}\right\} \leq 1.01.\] 
    Thus by Lemma~\ref{large_points_bound}, 
    \begin{equation}\label{LR_1n_angle}
        \cos \theta_{P,Q} \leq 0.504.
    \end{equation}
    \eqref{LR_1n_angle} shows that the angle between any two distinct points of $(\mathcal{L}(R)^*_n)^+$ is bounded from below. Therefore, Theorem~\ref{spherical_code_bound1} with some computation implies \eqref{LR_1n_bound}.
\end{proof}

We next estimate $\mathcal{L}(R)^{**}$. For each $Q \in E_D(\bbq)$, let $S_Q \in \frac{1}{3}R$ be the point such that $\lvert x(Q) - x(S_Q) \rvert$ is minimum. Define 
\[\mathcal{L}(R;S)^{**} = \{P \in \mathcal{L}(R)^{**}\:|\:S_Q=S\text{ for some }Q \in \frac{1}{3}(P-R)\}.\]
Clearly, 
\begin{equation}\label{LR_decomposition}
    \mathcal{L}(R)^{**} = \bigcup_{3S=R} \mathcal{L}(R;S)^{**}.
\end{equation}

\begin{proposition}\label{LR**_bound}
    For sufficiently large $D$, 
    \[\lvert \mathcal{L}(R)^{**} \rvert \ll 1.\]
\end{proposition}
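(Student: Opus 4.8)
The plan is to combine the decomposition \eqref{LR_decomposition} with Lemma~\ref{Diophantine_approximation} and the quantitative Roth theorem, Theorem~\ref{Quantitative_Roth_theorem}. Since $\frac{1}{3}R = \{S : 3S = R\}$ has at most $9$ elements, by \eqref{LR_decomposition} it suffices to prove that $\lvert \mathcal{L}(R;S)^{**} \rvert \ll 1$ for each fixed $S$ with $3S = R$; summing over the (at most) $9$ such $S$ then yields the proposition.

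Fix such an $S$ and put $\alpha := x(S)$. Since $x(S)$ is a root of $f_R(X) = \phi_3(X) - x(R)\psi_3(X)^2 \in \bbq[X]$, a polynomial of degree $9$, the number $\alpha$ is algebraic of degree $d \leq 9$. Let $P \in \mathcal{L}(R;S)^{**}$ and pick $Q \in \frac{1}{3}(P-R)$ with $S_Q = S$. The first task is to check that for large $D$ the triple $(P, Q, R)$, with $P = 3Q + R$, satisfies the hypotheses of Lemma~\ref{Diophantine_approximation}. The inequality $x(R) \geq MD$ is precisely the defining property of $R$. The bound $h(P) > 2000\log D$ follows from $\hat{h}(P) \geq 2200\log D$ and Lemma~\ref{Weil_canonical_height} (which in particular forces $x(P) > D$, so that the sharper comparison \eqref{Weil_canonical_3} applies). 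The bound $h(P) > 1000\,h(R)$ is obtained by combining the two lower bounds $\hat{h}(P) > 1050\,\hat{h}(R)$ and $\hat{h}(P) \geq 2200\log D$ with Lemma~\ref{Weil_canonical_height} and the height lower bound $\hat{h}(R) \geq \frac{1}{4}\log D + c_3$ from Lemma~\ref{canonical_height_lower_bound}, which together allow one to absorb the $\pm\log D$ error terms in the Weil/canonical height comparison on $E_D$. Granting this, Lemma~\ref{Diophantine_approximation} gives, for large $D$,
\[h(Q) > 5.77\,h(S), \qquad \frac{\log\lvert x(Q) - x(S) \rvert}{h(Q)} < -2.75.\]

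Now apply Theorem~\ref{Quantitative_Roth_theorem} with $\epsilon = 0.75$, the fixed algebraic number $\alpha = x(S)$ of degree $d \leq 9$, and $\beta = x(Q) \in \bbq$. From $h(Q) > 5.77\,h(S) \geq h(x(S)) = h(\alpha)$ and $h(Q) \geq \log 2$ for large $D$ (which holds since $Q$ is non-torsion, so $\hat{h}(Q) \geq \frac{1}{4}\log D + c_3 \to \infty$) we get $h(\beta) \geq \max\{h(\alpha), \log 2\}$, while the second displayed inequality is exactly $\frac{\log\lvert\alpha - \beta\rvert}{h(\beta)} < -(2+\epsilon)$. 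Hence the number of values $x(Q)$ that can arise is at most $2^{25}(0.75)^{-3}\log 18 \cdot \log((4/3)\log 18)$, an absolute constant independent of $E$, $D$, $R$, $S$. Finally, the assignment $P \mapsto x(Q)$ is at most $2$-to-$1$ on $\mathcal{L}(R;S)^{**}$: if two points give the same value, the chosen representatives satisfy $Q_1 = \pm Q_2$, hence $P_i = R + 3Q_i$ forces $P_2 \in \{P_1,\, 2R - P_1\}$. Therefore $\lvert \mathcal{L}(R;S)^{**} \rvert \ll 1$, which completes the plan.

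The step I expect to be the main obstacle is the verification flagged above, namely that $h(P) > 1000\,h(R)$ (and $h(P) > 2000\log D$) holds for every $P \in \mathcal{L}(R)^{**}$: one must carefully track the $\pm\log D$ error terms in the comparison of $h$ and $\hat{h}$ on $E_D$ and balance the ``intrinsic'' lower bound $\hat{h}(P) \geq 2200\log D$ against the ``relative'' lower bound $\hat{h}(P) > 1050\,\hat{h}(R)$, over all possible sizes of $\hat{h}(R)$. Everything downstream of Lemma~\ref{Diophantine_approximation} — the observation that $\frac{1}{3}R$ has at most $9$ elements, the degree estimate $d \leq 9$, the passage from finitely many rational approximations $x(Q)$ to finitely many points $P$, and the invocation of Theorem~\ref{Quantitative_Roth_theorem} — is routine bookkeeping.
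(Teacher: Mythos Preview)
Your overall plan matches the paper's proof exactly: decompose via \eqref{LR_decomposition}, verify the hypotheses of Lemma~\ref{Diophantine_approximation} for each $P\in\mathcal{L}(R;S)^{**}$, and then apply Theorem~\ref{Quantitative_Roth_theorem} with $d\le 9$ and $\epsilon=0.75$. Your extra bookkeeping---the $2$-to-$1$ bound on $P\mapsto x(Q)$, the check that $Q$ is non-torsion---is correct and in fact more careful than the paper's one-line invocation.

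The genuine gap is precisely the step you flagged. The lower bound $\hat h(R)\ge \tfrac14\log D+c_3$ from Lemma~\ref{canonical_height_lower_bound} is too weak to force $h(P)>1000\,h(R)$. From Lemma~\ref{Weil_canonical_height} one has $h(P)\ge \hat h(P)-c_2$ and $h(R)\le \hat h(R)+\log D-c_1$, so the target becomes $\hat h(P)>1000\,\hat h(R)+1000\log D+O(1)$. No convex combination of the two available inequalities $\hat h(P)>1050\,\hat h(R)$ and $\hat h(P)\ge 2200\log D$ can deliver this: that would require $1050\lambda\ge 1000$ and $2200(1-\lambda)\ge 1000$ simultaneously, i.e.\ $\lambda\ge \tfrac{20}{21}$ and $\lambda\le \tfrac{6}{11}$. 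Concretely, if $\hat h(R)\approx 2\log D$ then $h(R)$ can be as large as $\approx 3\log D$ while the hypotheses only force $h(P)\approx 2200\log D$, a ratio of about $733$. The missing input is that, by construction, $R$ is itself taken from $\mathcal{L}(R)$ (as the element of minimal canonical height; the clause ``$x$-coordinate $\ge MD$'' is then automatic by Lemma~\ref{small_x(P)}), so in particular $R\in E_D(\bbz)_{large}$ and $\hat h(R)\ge 2200\log D$. With this one gets $\log D\le \hat h(R)/2200$, hence $h(R)\le \tfrac{2201}{2200}\hat h(R)-c_1$, and combining with $h(P)\ge 1050\,\hat h(R)-c_2$ yields $h(P)/h(R)\to 1050\cdot\tfrac{2200}{2201}>1049$ as $D\to\infty$. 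This is the computation the paper compresses into ``By Lemma~\ref{Weil_canonical_height}''.
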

\begin{proof}
    By \eqref{LR_decomposition}, it suffices to prove 
    \begin{equation}\label{LRS_decomposition}
        \lvert \mathcal{L}(R;S)^{**} \rvert \ll 1,\quad S \in \frac{1}{3}R.
    \end{equation}
    Let $P \in \mathcal{L}(R;S)^{**}$ and write $P=3Q+R$ where $S_Q=S$. By Lemma~\ref{Weil_canonical_height}, 
    \[h(P) > 2000\log D,\quad h(P) > 1000h(R).\]
    Therefore, by Lemma~\ref{Diophantine_approximation}, 
    \[h(Q) \geq \max\{h(S),\log 2\},\quad \frac{\log \lvert x(S) - x(Q) \rvert}{h(Q)} < -2.7.\]
    Note that $x(S)$ is an algebraic number of degree at most $9$. By Theorem~\ref{Quantitative_Roth_theorem} with $d \leq 9$ and $\epsilon = 0.7$, \eqref{LRS_decomposition} is proved.
\end{proof}

\begin{proposition}\label{large_bound}
    For sufficiently large $D$, 
    \[\lvert E_D(\bbz)_{large} \rvert \ll 4^r.\]
\end{proposition}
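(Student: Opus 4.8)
The goal of Proposition~\ref{large_bound} is to combine the decomposition of $E_D(\bbz)_{large}$ into cosets of $3E_D(\bbq)$ with the two bounds already established, namely Proposition~\ref{LR*_bound} for $\mathcal{L}(R)^*$ and Proposition~\ref{LR**_bound} for $\mathcal{L}(R)^{**}$. The plan is as follows. First I would discard torsion points, which is harmless since $|E_D(\bbq)_{tors}| \leq 4$ for sufficiently large $D$ by Lemma~\ref{torsion}, and also discard the finitely many (independently of $r$, $D$) points with $x(P) < MD$, which by Lemma~\ref{small_x(P)} do not even lie in $E_D(\bbz)_{large}$ — so in fact every point of $E_D(\bbz)_{large}$ automatically has $x$-coordinate $\geq MD$. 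Then I would write
\[
E_D(\bbz)_{large} = \bigsqcup_{R} \mathcal{L}(R),
\]
the union running over a set of coset representatives of $3E_D(\bbq)$ in $E_D(\bbq)$; by Lemma~\ref{torsion} the index $[E_D(\bbq) : 3E_D(\bbq)]$ is at most $3^{r} \cdot |E_D(\bbq)_{tors}[3]| \leq 3^{r}$ for large $D$ (the $3$-torsion being trivial since the torsion is a $2$-group), so there are $\ll 3^r$ cosets.

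Next I would invoke the splitting $\mathcal{L}(R) = \mathcal{L}(R)^* \cup \mathcal{L}(R)^{**}$ recorded just before Proposition~\ref{LR*_bound}, where $R$ is chosen (when $\mathcal{L}(R)$ is nonempty) to be the point of minimal canonical height among those in its coset with $x$-coordinate $\geq MD$. Proposition~\ref{LR*_bound} gives $|\mathcal{L}(R)^*| \ll (1.33)^r$ and Proposition~\ref{LR**_bound} gives $|\mathcal{L}(R)^{**}| \ll 1$, hence $|\mathcal{L}(R)| \ll (1.33)^r$ uniformly in $R$. Summing over the $\ll 3^r$ cosets yields
\[
|E_D(\bbz)_{large}| \ll 3^r \cdot (1.33)^r = (3.99)^r \ll 4^r,
\]
which is the claimed bound. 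The arithmetic $3 \times 1.33 = 3.99 < 4$ is exactly why the threshold $1050$ (and hence the constant $0.504$ in Lemma~\ref{large_points_bound}, feeding a spherical-code bound just under $1.33$) was chosen as it was; I would make the numerics explicit, citing the appendix for the spherical-code computation behind $(1.33)^r$.

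The step I expect to require the most care is the bookkeeping on the index of $3E_D(\bbq)$: one must be sure that the relevant quotient $E_D(\bbq)/3E_D(\bbq)$ has size $\ll 3^r$ and not a larger constant times $3^r$ — here one uses that for sufficiently large $D$ the torsion of $E_D$ is a $2$-group (Lemma~\ref{torsion}), so multiplication by $3$ is injective on torsion and the quotient has order exactly $3^r$. A secondary point to handle cleanly is that the definition of $\mathcal{L}(R)^{**}$ and Proposition~\ref{LR**_bound} implicitly require that every $P \in \mathcal{L}(R)$ admit a representation $P = 3Q + R$ with $x(Q) \geq MD$ (so that Lemma~\ref{Diophantine_approximation} applies), which again follows from Lemma~\ref{small_x(P)} together with the minimality in the choice of $R$. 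Everything else is a matter of assembling the already-proved pieces and checking that the product of the exponential bases stays below $4$.
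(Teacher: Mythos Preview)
Your proposal is correct and follows exactly the paper's approach: combine $|\mathcal{L}(R)| \ll (1.33)^r$ from Propositions~\ref{LR*_bound} and~\ref{LR**_bound} with the fact that there are $3^r$ cosets of $3E_D(\bbq)$ (Lemma~\ref{torsion} gives trivial $3$-torsion), yielding $(3.99)^r \leq 4^r$. One small slip in your final paragraph: the hypothesis of Lemma~\ref{Diophantine_approximation} is $x(R) \geq MD$, not $x(Q) \geq MD$, and this is guaranteed directly by the stated choice of $R$ as a point with $x$-coordinate $\geq MD$ --- no condition on $Q$ is needed.
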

\begin{proof}
    By Proposition~\ref{LR*_bound} and Proposition~\ref{LR**_bound}, $\lvert \mathcal{L}(R) \rvert \ll (1.33)^r$ whenever $\mathcal{L}(R)$ is non-empty. By Lemma~\ref{torsion}, there are $3^r$ cosets of $3E_D(\bbq)$ for sufficiently large $D$. Therefore, 
    \[\lvert E_D(\bbz)_{large} \rvert \ll (3.99)^r \leq 4^r.\]
\end{proof}

\appendix

\section{Calculations for Lemma~\ref{P+Q_positive_bound} and Lemma~\ref{P+Q_negative_bound}}

\begin{lemma}\label{f(x)_lower_bound}
    Let $x \geq 1$ be a variable and $\lvert a \rvert,\lvert b \rvert \leq 0.01$ be constants. Define 
    \[f(x) = \frac{(x+a)(x+1)+2b-2\sqrt{x^3+ax+b}\sqrt{1+a+b}}{(1-x)^2}.\]
    Then $f(x) \geq 0.19$ for all $x \geq 1$.
\end{lemma}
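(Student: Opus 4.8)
The plan is to rationalize the square root in the numerator and reduce the claim to an elementary polynomial estimate. First I would note that for $x \geq 1$ and $|a|,|b| \leq 0.01$ one has $x^3+ax+b \geq 0.98 > 0$ and $1+a+b \geq 0.98 > 0$, so both radicals are real, and the conjugate expression
\[\overline{N}(x) := (x+a)(x+1)+2b+2\sqrt{x^3+ax+b}\,\sqrt{1+a+b}\]
is strictly positive. Multiplying the numerator and denominator of $f$ by $\overline{N}(x)$ replaces the numerator by
\[N_2(x) := \bigl((x+a)(x+1)+2b\bigr)^2 - 4(x^3+ax+b)(1+a+b),\]
so that $f(x) = N_2(x)/\bigl((x-1)^2\,\overline{N}(x)\bigr)$ for $x > 1$ (and the value at $x=1$ is the common limit).

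The key algebraic step is that $(x-1)^2$ divides $N_2$. Expanding the square gives
\[N_2(x) = x^4 - (2+2a+4b)x^3 + (1+4a+4b+a^2)x^2 + (-2a+4b-2a^2)x + (a^2-4b),\]
and one checks directly (for instance by verifying $N_2(1)=N_2'(1)=0$, or by polynomial division) that
\[N_2(x) = (x-1)^2\,q(x), \qquad q(x) := x^2 - (2a+4b)x + (a^2-4b).\]
Cancelling the common factor yields $f(x) = q(x)/\overline{N}(x)$, an expression continuous on all of $[1,\infty)$, so it suffices to bound this quotient from below there.

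It remains to estimate $q$ from below and $\overline{N}$ from above. For $x \geq 1$, using $|2a+4b| \leq 0.06$, $a^2-4b \geq -0.04$, and $x \leq x^2$, $1 \leq x^2$, we get $q(x) \geq x^2 - 0.06\,x^2 - 0.04\,x^2 = 0.9\,x^2$. For the denominator, the polynomial part satisfies $(x+a)(x+1)+2b \leq x^2 + 1.01x + 0.03 \leq 2.04\,x^2$, while $(x^3+ax+b)(1+a+b) \leq 1.02^2\,x^3 = 1.0404\,x^3$ gives $2\sqrt{x^3+ax+b}\,\sqrt{1+a+b} \leq 2.04\,x^{3/2} \leq 2.04\,x^2$; hence $\overline{N}(x) \leq 4.08\,x^2$. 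Therefore
\[f(x) = \frac{q(x)}{\overline{N}(x)} \geq \frac{0.9\,x^2}{4.08\,x^2} = \frac{0.9}{4.08} > 0.22 > 0.19\]
for all $x \geq 1$, as claimed.

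The argument is elementary throughout; the only point needing genuine care is the identity $N_2(x) = (x-1)^2 q(x)$ (routine but worth checking by hand), together with confirming that the two crude one-sided bounds retain enough numerical slack for the final quotient to clear the target $0.19$ comfortably, which the choices above do with room to spare.
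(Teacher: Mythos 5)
Your argument is correct, and it is genuinely different from the paper's. The paper instead rewrites
\[f(x) = \left(\frac{x^2+x+1+a}{\sqrt{x^3+ax+b}+\sqrt{1+a+b}}\right)^2 - (x+1),\]
bounds the denominator to reduce to the function $g(x) = \big(\tfrac{x^2+x+0.99}{x^{3/2}+1.02}\big)^2 - (x+1)$, and then proves $g$ is increasing on $[1,\infty)$ by checking that a degree-six polynomial in $t=\sqrt{x}$ is positive (via repeated differentiation), finally evaluating $g(1) \approx 0.1909$ to hit the target $0.19$ with essentially no slack. You instead rationalize the entire numerator by its conjugate, observe that the rationalized numerator $N_2(x) = \big((x+a)(x+1)+2b\big)^2 - 4(x^3+ax+b)(1+a+b)$ must vanish to second order at $x=1$ (since the original numerator has a double zero there, as can be checked), factor out $(x-1)^2$ exactly to get $f(x) = q(x)/\overline{N}(x)$ with $q(x) = x^2 - (2a+4b)x + (a^2-4b)$, and then finish by the crude but sufficient bounds $q(x) \geq 0.9x^2$ and $\overline{N}(x) \leq 4.08x^2$. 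I verified the factorization and the numerical estimates: they are correct, and they yield the stronger bound $f(x) \geq 0.9/4.08 > 0.22$. Your route is more elementary (no calculus, no monotonicity argument), structurally cleaner, and delivers more numerical slack; its only cost is the need to carry out the expansion of $N_2$ and confirm the $(x-1)^2$ factorization, which you handle explicitly.
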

\begin{proof}
    Note that 
    \begin{align*}
        f(x) &= \frac{x^2+x+ax+a+2b-2\sqrt{x^3+ax+b}\sqrt{1+a+b}}{(1-x)^2} \\
        &= \frac{x^3+1+ax+a+2b-2\sqrt{x^3+ax+b}\sqrt{1+a+b}}{(1-x)^2} - (x+1) \\
        &= \left(\frac{\sqrt{x^3+ax+b}-\sqrt{1+a+b}}{x-1}\right)^2 - (x+1) \\
        &= \left(\frac{x^2+x+1+a}{\sqrt{x^3+ax+b}+\sqrt{1+a+b}}\right)^2 - (x+1).
    \end{align*}
    From the inequality 
    \[\sqrt{u+v} \leq \sqrt{u} + \frac{v}{2\sqrt{u}},\quad u,v \geq 0,\]
    we have 
    \[\sqrt{x^3+ax+b} \leq \sqrt{x^3+0.01x+0.01} \leq x^{3/2} + \frac{0.01x+0.01}{2x^{3/2}} \leq x^{3/2} + 0.01.\]
    Also 
    \[\sqrt{1+a+b} \leq \sqrt{1.02} \leq 1.01.\]
    Thus 
    \[(\sqrt{x^3+ax+b}+\sqrt{1+a+b})^2 \leq (x^{3/2} + 1.02)^2.\]
    It follows that 
    \[f(x) \geq \left(\frac{x^2+x+0.99}{x^{3/2} + 1.02}\right)^2-(x+1).\]
    
    Define 
    \[h(x) = \frac{x^2+x+0.99}{x^{3/2} + 1.02},\quad g(x) = h(x)^2-(x+1).\]
    We have 
    \begin{align*}
        h'(x) &= \frac{(2x+1)(x^{3/2}+1.02) - (x^2+x+0.99)(1.5x^{1/2})}{(x^{3/2} + 1.02)^2} \\
        &= \frac{0.5x^{5/2}-0.5x^{3/2}+2.04x-1.485x^{1/2}+1.02}{(x^{3/2} + 1.02)^2}.
    \end{align*}
    Thus 
    \begin{align*}
        g'(x) &= 2h(x)h'(x)-1 \\
        &= 2\frac{x^2+x+0.99}{x^{3/2} + 1.02}\frac{0.5x^{5/2}-0.5x^{3/2}+2.04x-1.485x^{1/2}+1.02}{(x^{3/2} + 1.02)^2} - 1 \\
        &= \frac{(x^2+x+0.99)(x^{5/2}-x^{3/2}+4.08x-2.97x^{1/2}+2.04)}{(x^{3/2} + 1.02)^3} - 1 \\
        &= \frac{1.02x^3-2.98x^{5/2}+6.12x^2-7.0812x^{3/2}+6.0792x-2.9403x^{1/2}+0.958392}{(x^{3/2} + 1.02)^3}.
    \end{align*}
    
    We will prove 
    \[1.02t^6-2.98t^5+6.12t^4-7.0812t^3+6.0792t^2-2.9403t+0.958392 > 0,\quad t \geq 1.\]
    Define 
    \[G(t) = 1.02t^6-2.98t^5+6.12t^4-7.0812t^3+6.0792t^2-2.9403t+0.958392,\quad t \geq 1.\]
    One can easily check 
    \begin{gather*}
        G(1) = 1.176092, \:G'(1) = 3.6745,\:G''(1) = 14.1112,\:G^{(3)}(1) = 47.9928, \\
        G^{(4)}(1) = 156.48,\: G^{(5)}(1) = 376.8,\:G^{(6)}(t) = 734.4.
    \end{gather*}
    From $G^{(6)}(t) = 734.4>0$, $G^{(5)}(t)$ is increasing and thus $G^{(5)}(t) \geq 376.8$. This implies that $G^{(4)}(t)$ is increasing and thus $G^{(4)}(t) \geq 156.48$. Repeating this gives $G(t) \geq 1.176092 > 0$.

    Now we have shown that $g'(x)>0$ for all $x \geq 1$. Hence, $g(x)$ is increasing. Since 
    \[g(1) = \left(\frac{2.99}{2.02}\right)^2 - 2 \geq 0.19,\] 
    the proof is over.
\end{proof}

\begin{lemma}\label{f(x)_upper_bound}
    Let $x \geq 1$ be a variable and $\lvert a \rvert,\lvert b \rvert \leq 0.01$ be constants. Define 
    \[f(x) = \frac{(x+a)(x+1)+2b-2\sqrt{x^3+ax+b}\sqrt{1+a+b}}{(1-x)^2}.\]
    Then $f(x) \leq 2$ for all $x \geq 1$.
\end{lemma}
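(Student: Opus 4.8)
The plan is to follow the proof of Lemma~\ref{f(x)_lower_bound} almost verbatim, replacing the upper estimate for $\sqrt{x^3+ax+b}$ used there with a lower estimate. First I would invoke the algebraic identity already established in that proof,
\[
f(x) = \left(\frac{x^2+x+1+a}{\sqrt{x^3+ax+b}+\sqrt{1+a+b}}\right)^{2} - (x+1),
\]
valid for all $x\geq 1$, in which the numerator and both radicands are positive (for $x\geq 1$ and $|a|,|b|\leq 0.01$ one has $x^3+ax+b\geq 0.98>0$ and $1+a+b\geq 0.98>0$). To bound $f$ from above I now need a lower bound on the denominator. Writing $x^3+ax+b\geq x^3\bigl(1-0.01x^{-2}-0.01x^{-3}\bigr)$ and using $\sqrt{1-s}\geq 1-s$ for $s\in[0,1]$ gives $\sqrt{x^3+ax+b}\geq x^{3/2}-0.01x^{-1/2}-0.01x^{-3/2}\geq x^{3/2}-0.02$ for $x\geq 1$; combined with $\sqrt{1+a+b}\geq\sqrt{0.98}\geq 0.98$ the denominator is $\geq x^{3/2}+0.96$. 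Since also $x^2+x+1+a\leq x^2+x+1.01$ and the fraction in the identity is nonnegative, squaring yields
\[
f(x)\leq g(x):=\left(\frac{x^2+x+1.01}{x^{3/2}+0.96}\right)^{2}-(x+1),\qquad x\geq 1.
\]

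It then remains to prove $g(x)\leq 2$ for $x\geq 1$, equivalently $(x^2+x+1.01)^2\leq (x+3)(x^{3/2}+0.96)^2$. Substituting $t=\sqrt{x}\geq 1$ clears the half-integer powers and reduces this to the polynomial inequality
\[
H(t):=(t^2+3)(t^3+0.96)^2-(t^4+t^2+1.01)^2=t^6+1.92t^5-3.02t^4+5.76t^3-1.0984t^2+1.7447\geq 0,\quad t\geq 1.
\]
I would establish this exactly as $G(t)\geq 0$ is established in Lemma~\ref{f(x)_lower_bound}: compute $H(1),H'(1),\dots,H^{(6)}(1)$, observe that they are all positive (for instance $H(1)\approx 6.31$, $H'(1)\approx 18.6$, and $H^{(6)}(t)\equiv 720$), and then argue downward from $H^{(6)}>0$ that each successive derivative is increasing on $[1,\infty)$ and hence nonnegative, so finally $H(t)\geq H(1)>0$.

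The only point demanding any care is the calibration of the constants $0.96$ and $1.01$ in the denominator and numerator estimates: they must be crude enough to be trivially justified yet sharp enough that $H$ remains nonnegative. This causes no real difficulty, since $g(1)<0.36$ and $g(x)\to 1$ as $x\to\infty$, so there is substantial slack; concretely the only negative coefficients of $H$, namely those of $t^4$ and $t^2$, are comfortably absorbed by the $t^6$, $t^5$ and $t^3$ terms for $t\geq 1$, which is precisely what the sequence of positive derivatives at $t=1$ records. Thus I anticipate no genuine obstacle.
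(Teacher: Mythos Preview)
Your argument is correct and follows essentially the same route as the paper: start from the identity for $f(x)$, bound the denominator from below and the numerator from above, and reduce $f(x)\leq 2$ to a one-variable polynomial inequality. The only difference is cosmetic: the paper bounds the \emph{square} $(\sqrt{x^3+ax+b}+\sqrt{1+a+b})^2$ directly by $x^3+1.93x$, which keeps everything polynomial in $x$ and yields a cubic $x^3-1.09x^2+3.77x-1.0201\geq 0$ dispatched via a discriminant check, whereas your bound $x^{3/2}+0.96$ on the unsquared sum introduces half-integer powers and forces the substitution $t=\sqrt{x}$ and a sextic---but your Taylor-at-$1$ verification of $H(t)\geq 0$ is perfectly valid.
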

\begin{proof}
    We start from 
    \[f(x) = \left(\frac{x^2+x+1+a}{\sqrt{x^3+ax+b}+\sqrt{1+a+b}}\right)^2 - (x+1).\]
    The inequality $f(x) \leq 2$ is equivalent to 
    \[(x^2+x+1+a)^2 \leq (x+3)(\sqrt{x^3+ax+b}+\sqrt{1+a+b})^2.\]
    Note that 
    \[(\sqrt{x^3+ax+b}+\sqrt{1+a+b})^2 = x^3+ax+(1+a+2b) + 2\sqrt{x^3+ax+b}\sqrt{1+a+b}\]
    and 
    \begin{align*}
        2\sqrt{x^3+ax+b}\sqrt{1+a+b} &\geq 2\sqrt{x^3-0.01x-0.01}\sqrt{0.98} \geq 1.97\sqrt{x^2-x+0.25} \\
        &\geq 1.94(x-0.5) \geq 1.94x - 0.97.
    \end{align*}
    Thus 
    \[(\sqrt{x^3+ax+b}+\sqrt{1+a+b})^2 \geq x^3+(1.94+a)x+(1+a+2b-0.97) \geq x^3+1.93x.\]
    Therefore, it suffices to prove 
    \[(x^2+x+1.01)^2 \leq (x+3)(x^3+1.93x).\]
    Rearranging gives 
    \[x^3-1.09x^2+3.77x-1.0201 \geq 0.\]

    Define 
    \[g(x) = x^3-1.09x^2+3.77x-1.0201.\]
    Then 
    \[g'(x) = 3x^2-2.18x+3.77.\]
    Since the discriminant of $g'(x)$ is negative, $g'(x)>0$ for all $x \in \bbr$. Thus $g(x)$ is increasing for all $x \in \bbr$. Since $g(1) \geq 2.65$, $g(x) \geq g(1) > 0$ for all $x \geq 1$.
\end{proof}

\begin{lemma}\label{g(x)_lower_bound}
    Let $x \geq 1$ be a variable and $\lvert a \rvert,\lvert b \rvert \leq 0.01$ be constants. Define 
    \[g(x) = \frac{(x+a)(x+1)+2b+2\sqrt{x^3+ax+b}\sqrt{1+a+b}}{(1-x)^2}.\]
    Then $g(x) \geq 1$ for all $x \geq 1$.
\end{lemma}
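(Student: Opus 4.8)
The plan is to mirror the opening algebraic step in the proof of Lemma~\ref{f(x)_lower_bound}, now with the sign of the radical reversed, and then to observe that the target constant $1$ is small enough for a very crude estimate to close the argument. Throughout I take $x>1$, so that $(x-1)^2>0$; at $x=1$ the numerator of $g$ equals $4+4a+4b>0$ while the denominator vanishes, so the assertion is to be read as a limit and holds trivially there. Using $(x+1)(x-1)^2=x^3-x^2-x+1$ together with $\bigl(\sqrt{u}+\sqrt{v}\bigr)^2=u+v+2\sqrt{uv}$ for $u=x^3+ax+b$ and $v=1+a+b$, a direct computation gives
\[
g(x)=\left(\frac{\sqrt{x^3+ax+b}+\sqrt{1+a+b}}{\,x-1\,}\right)^{\!2}-(x+1).
\]

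Since $x>1$, the inequality $g(x)\geq 1$ is then equivalent to $\bigl(\sqrt{u}+\sqrt{v}\bigr)^{2}\geq (x-1)^2(x+2)$. Bound the left-hand side from below by discarding the non-negative cross term, so that $\bigl(\sqrt{u}+\sqrt{v}\bigr)^2\geq u+v=x^3+ax+1+a+2b$, and expand $(x-1)^2(x+2)=x^3-3x+2$. It therefore suffices to show $(a+3)x+(a+2b-1)\geq 0$ whenever $x\geq 1$ and $\lvert a\rvert,\lvert b\rvert\leq 0.01$; and since $a+3>0$ and $x\geq 1$,
\[
(a+3)x+(a+2b-1)\ \geq\ (a+3)+(a+2b-1)\ =\ 2a+2b+2\ \geq\ 1.96\ >\ 0,
\]
which finishes the proof.

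I do not expect a genuine obstacle here. In contrast with Lemma~\ref{f(x)_lower_bound}, where the larger constant $0.19$ forces a real monotonicity analysis of an auxiliary function, the $+$ sign in $g$ means that merely dropping the term $2\sqrt{uv}$ already beats $(x-1)^2(x+2)$ by a comfortable margin. The only steps needing any care are the elementary verification of the perfect-square rewriting and the harmless treatment of the boundary point $x=1$, where $(1-x)^2$ vanishes.
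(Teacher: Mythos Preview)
Your proof is correct and, after unwinding the perfect-square rewriting, coincides with the paper's argument: both simply discard the non-negative radical term and reduce to the linear inequality $(a+3)x+(a+2b-1)\geq 0$ for $x\geq 1$. The paper does this more directly, bounding the numerator by $(x+a)(x+1)+2b\geq x^2+0.99x-0.03\geq (x-1)^2$ without first passing through the $(\sqrt{u}+\sqrt{v})^2-(x+1)$ form; your detour via that identity is harmless but unnecessary here, since the target constant is $1$ rather than something demanding the finer monotonicity analysis of Lemma~\ref{f(x)_lower_bound}.
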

\begin{proof}
    Note that 
    \[(x+a)(x+1)+2b \geq x^2+0.99x+0.97 \geq x^2-2x+1 = (x-1)^2.\]
    Therefore, 
    \[(x+a)(x+1)+2b+2\sqrt{x^3+ax+b}\sqrt{1+a+b} \geq (x+a)(x+1)+2b \geq (x-1)^2.\]
\end{proof}

\begin{lemma}\label{g(x)_upper_bound}
    Let $x \geq 1$ be a variable and $\lvert a \rvert,\lvert b \rvert \leq 0.01$ be constants. Define 
    \[g(x) = \frac{(x+a)(x+1)+2b+2\sqrt{x^3+ax+b}\sqrt{1+a+b}}{(1-x)^2}.\]
    Then $g(x) \leq \frac{(2x+1)^2}{(x-1)^2}$ for all $x \geq 1$.
\end{lemma}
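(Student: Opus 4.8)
The plan is to clear the common denominator, reduce to a polynomial-plus-radical inequality, bound the radical crudely using $|a|,|b|\le 0.01$, and finish with an elementary one-variable estimate.

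First I would observe that $(1-x)^2=(x-1)^2$, so the two sides of the claimed inequality $g(x)\le (2x+1)^2/(x-1)^2$ have the same denominator; hence for $x>1$ the inequality is equivalent to the numerator inequality
\[(x+a)(x+1)+2b+2\sqrt{x^3+ax+b}\,\sqrt{1+a+b}\;\le\;(2x+1)^2,\]
while at $x=1$ both sides of the original inequality equal $+\infty$, so there is nothing to prove there. I would also note in passing that $x^3+ax+b\ge x^3-0.01x-0.01\ge 0.98>0$ and $1+a+b\ge 0.98>0$ for $x\ge 1$, so both radicals are real. Expanding $(x+a)(x+1)+2b=x^2+(1+a)x+a+2b$ and $(2x+1)^2=4x^2+4x+1$, the numerator inequality rearranges to
\[2\sqrt{(x^3+ax+b)(1+a+b)}\;\le\;3x^2+(3-a)x+(1-a-2b).\]

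Next I would estimate both sides. For the left side, using $\sqrt{u+v}\le\sqrt{u}+\tfrac{v}{2\sqrt u}$ with $u=x^3$ and $v=0.01(x+1)\ge ax+b$, together with $x+1\le 2x$ for $x\ge 1$, gives $\sqrt{x^3+ax+b}\le x^{3/2}+0.01$; and $\sqrt{1+a+b}\le\sqrt{1.02}\le 1.01$, so the left side is at most $2.02\,x^{3/2}+0.0202$. For the right side, $|a|,|b|\le 0.01$ gives the lower bound $3x^2+2.99x+0.97$. Thus it suffices to prove
\[3x^2-2.02\,x^{3/2}+2.99x+0.9498\;\ge\;0\qquad(x\ge 1),\]
which after the substitution $t=x^{1/2}\ge 1$ becomes $3t^4-2.02t^3+2.99t^2+0.9498\ge 0$. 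This holds because $3t^4-2.02t^3=t^3(3t-2.02)\ge 0.98\,t^3\ge 0$ for $t\ge 1$ and the remaining terms are positive (equivalently, $3t^2-2.02t+2.99$ has negative discriminant). Dividing back through by $(x-1)^2$ for $x>1$ completes the argument.

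There is no genuine obstacle here; every step is a routine estimate. The only points requiring a little care are recognizing that the two sides share the denominator $(1-x)^2=(x-1)^2$ (so one may compare numerators and the boundary case $x=1$ is trivially $\infty\le\infty$), orienting the crude radical bounds in the correct direction, and the final sign check, for which the factorization $3t^4-2.02t^3=t^3(3t-2.02)$ (or a discriminant computation) makes positivity transparent.
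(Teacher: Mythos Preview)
Your argument is correct and follows the same overall strategy as the paper: clear the common denominator $(1-x)^2=(x-1)^2$, bound the radical term from above, and verify the resulting elementary inequality. The only minor difference is in the radical estimate: the paper uses the cruder polynomial bound $x^3+ax+b\le (x^2+0.5)^2$ so that $2\sqrt{x^3+ax+b}\sqrt{1+a+b}\le 3(x^2+0.5)$, which keeps everything polynomial and leads directly to $(x+a)(x+1)+2b+2\sqrt{\cdots}\le 4x^2+2x+2\le(2x+1)^2$; you instead use the sharper bound $\sqrt{x^3+ax+b}\le x^{3/2}+0.01$, which introduces a half-integer power and requires the substitution $t=\sqrt{x}$, but the conclusion follows just as easily.
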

\begin{proof}
    Note that 
    \[x^3+ax+b \leq x^3+x+0.25 \leq x^4+x^2+0.25 = (x^2+0.5)^2.\]
    Therefore, 
    \[2\sqrt{x^3+ax+b}\sqrt{1+a+b} \leq 2\sqrt{1.02}(x^2+0.5) \leq 3(x^2+0.5).\]
    It follows that 
    \begin{align*}
        (x+a)(x+1)+2b+2\sqrt{x^3+ax+b}\sqrt{1+a+b} &\leq x^2+1.01x+0.03+3(x^2+0.5) \\
        &\leq 4x^2+2x+2 \leq 4x^2+4x+1 \\
        &= (2x+1)^2.
    \end{align*}
\end{proof}

\section{Table for Proposition~\ref{MS_bound}}

We have the following table of $\cos \theta$ and 
\[E(\theta) = \exp\left(\frac{1+\sin \theta}{2\sin \theta}\log\frac{1+\sin \theta}{2\sin \theta} - \frac{1-\sin \theta}{2\sin \theta}\log \frac{1-\sin \theta}{2\sin \theta}+0.001\right)\]
for $n=2,\ldots,20$.
\begin{table}[h]
\centering
\begin{tabular}{|r|c|c|}
\hline
$n$ & $\cos\theta$ & $E(\theta)$ \\
\hline
2  & 0.9295160031 & 3.6029265222 \\
3  & 0.8000000000 & 2.1186523293 \\
4  & 0.7333333333 & 1.8270722583 \\
5  & 0.6909090909 & 1.6930091121 \\
6  & 0.6615384615 & 1.6154645667 \\
7  & 0.6400000000 & 1.5648147297 \\
8  & 0.6235294118 & 1.5291061421 \\
9  & 0.6105263158 & 1.5025674344 \\
10 & 0.6000000000 & 1.4820645884 \\
11 & 0.5913043478 & 1.4657471511 \\
12 & 0.5840000000 & 1.4524515355 \\
13 & 0.5777777778 & 1.4414091501 \\
14 & 0.5724137931 & 1.4320918024 \\
15 & 0.5677419355 & 1.4241244810 \\
16 & 0.5636363636 & 1.4172335583 \\
17 & 0.5600000000 & 1.4112146878 \\
18 & 0.5567567568 & 1.4059121773 \\
19 & 0.5538461538 & 1.4012053190 \\
20 & 0.5512195122 & 1.3969990839 \\
\hline
\end{tabular}
\end{table}

\end{document}